\newtheorem{theorem}{Theorem}[section]
\newtheorem{lemma}[theorem]{Lemma}
\newtheorem{proposition}[theorem]{Proposition}
\newtheorem{corollary}[theorem]{Corollary}
\newcommand{\R}{\mathbb{R}}
\newcommand{\T}{\mathbb{T}}
\newcommand{\beq}{\begin{equation}}
\newcommand{\eeq}{\end{equation}}
\newcommand{\beqq}{\begin{equation*}}
\newcommand{\eeqq}{\end{equation*}}
\theoremstyle{definition}
\newtheorem{definition}[theorem]{Definition}
\newtheorem{remark}[theorem]{Remark}
\numberwithin{equation}{section}
\DeclareMathOperator{\re}{Re}
\DeclareMathOperator{\im}{Im}
\newcommand{\abs}[1]{\left|#1\right|}
\newcommand{\norm}[1]{\left\|#1\right\|}
\newcommand{\inner}[1]{\left \langle#1\right \rangle}
\newcommand{\sq}[1]{\left[#1\right]}
\numberwithin{equation}{section}
\begin{document}

\title[NLS with Ornstein-Uhlenbeck operator]{Nonlinear Schr\"odinger equation with Ornstein-Uhlenbeck operator}

\author{Xueying Yu, Haitian Yue and Zehua Zhao}

\address{Xueying Yu
\newline \indent Department of Mathematics, Oregon State University\indent 
\newline \indent  Kidder Hall 368
Corvallis, OR 97331}
\email{xueying.yu@oregonstate.edu}

\address{Haitian Yue
\newline \indent Institute of Mathematical Sciences, ShanghaiTech University\newline\indent
Pudong, Shanghai, China.}
\email{yuehaitian@shanghaitech.edu.cn}

\address{Zehua Zhao
\newline \indent Department of Mathematics and Statistics, Beijing Institute of Technology, Beijing, China.
\newline \indent Key Laboratory of Algebraic Lie Theory and Analysis of Ministry of Education, Beijing, China.}
\email{zzh@bit.edu.cn}

\subjclass[2020]{Primary: 35Q55; Secondary: 35R01, 37K06, 37L50}

\keywords{Nonlinear Schr\"odinger equation, Ornstein-Uhlenbeck, Strichartz estimate, Gaussian-weighted interaction Morawetz estimate, well-posedness, scattering, finite-time blow-up, NLS on mixed geometry.}

\begin{abstract}

In this work, we introduce and study nonlinear Schr\"odinger equations (NLS) with anisotropic dispersion, where the standard Laplacian acts on the Euclidean variable \(x \in \mathbb{R}^d\), and an Ornstein-Uhlenbeck ($\mathcal{OU}$) operator governs the confined direction \(\alpha \in \mathbb{R}\). We consider models with  two natural variants of $\mathcal{OU}$-induced confinement: \eqref{maineq2} based on the divergence form \(\nabla_\alpha \cdot (e^{-\frac{\alpha^2}{2}} \nabla_\alpha)\), and \eqref{maineq1} based on the non-divergence form \(\Delta_\alpha - \alpha \cdot \nabla_\alpha\). For both models, we establish the Strichartz estimates and Gaussian-weighted Morawetz estimates. In addition, for \eqref{maineq2}, we prove a virial-type finite-time blow-up result; for \eqref{maineq1}, we establish global well-posedness and small data scattering in the 2D quintic and 3D cubic cases. The primary motivation of this work is to capture waveguide-type dispersive behavior in a Euclidean setting. To the best of our knowledge, this is the first rigorous analysis of NLS with $\mathcal{OU}$ operators in both divergence and non-divergence forms.

\end{abstract}

\maketitle

\setcounter{tocdepth}{1}
\tableofcontents

\parindent = 10pt     
\parskip = 8pt

\section{Introduction}\label{1}

In this paper, we introduce a new class of nonlinear Schr\"odinger equations (NLS) incorporating Ornstein-Uhlenbeck ($\mathcal{OU}$) operators, and study their unified dispersive dynamics under drift confinement influenced by $\mathcal{OU}$ operators.

\subsection{Background and motivations}

As this is a new model, we will begin with an overview of its background. Referring to the seminal work \cite{OU} by Uhlenbeck and Ornstein, a model of Brownian motion with a drift component was introduced, which has subsequently become known as the Ornstein–Uhlenbeck process.
This process has been widely applied across various fields. In the context of optical fibers with refractive index fluctuations, the $\mathcal{OU}$ operator, $\nabla_\alpha \cdot (e^{-\frac{\alpha^2}{2}} \nabla_\alpha)$ is used to model disorder-induced localization. It is worth noting that this modification avoids the artificial periodicity of $\mathbb{T}^n$, which motivates our study of NLS with $\mathcal{OU}$ drift as a more flexible framework for non-compact confinement. Further details will be discussed in later sections.

The Ornstein–Uhlenbeck process can heuristically be viewed as a harmonic analysis model where the Gaussian measure replaces the usual Lebesgue measure. 
The Gaussian measure naturally serves as a substitute for the Lebesgue measure in infinite-dimensional settings, making the 
$\mathcal{OU}$ operator a fundamental tool in fields such as stochastic analysis and quantum field theory. This work aims to bridge the \textit{probabilistic framework} of $\mathcal{OU}$ processes with the analysis of dispersive PDEs.
 In $L^2$ with Gaussian measure, the $\mathcal{OU}$ operator can be viewed as a generalization of the Laplace operator, defined by the following form:
\begin{align}
\mathcal{OU} := \Delta_{\alpha} - \alpha \cdot \nabla_{\alpha} ,
\end{align}
where $\alpha$ is the spatial variable and it is self-adjoint.

\subsection{Divergence form vs non-divergence form}
As the readers may have observed, we used two different forms of $\mathcal{OU}$ operators. 
We will now take a moment to discuss the relations and differences between these two forms, $\nabla_\alpha \cdot (e^{-\frac{\alpha^2}{2}} \nabla_\alpha)$ and $\Delta_{\alpha} - \alpha \cdot \nabla_{\alpha}$. In fact, a direct computation shows
\begin{align}\label{eq div}
\nabla_\alpha \cdot (e^{-\frac{\alpha^2}{2}} \nabla_\alpha f) =  (\Delta_{\alpha} - \alpha \cdot \nabla_{\alpha} f ) e^{-\frac{\alpha^2}{2}} ,
\end{align}
which implies that they are identical up to a Gaussian weight $e^{-\frac{\alpha^2}{2}}$. They will, for example, exhibit identical spectral properties. In particular, we note that $\nabla_\alpha \cdot \left(e^{-\frac{\alpha^2}{2}} \nabla_\alpha \right)$ is expressed in divergence form.

We mention the divergence form since it is often favorable and convenient to utilize the divergence form of elliptic operators. 
For example, the Laplace operator $\Delta$ can be represented as the composition of the divergence and gradient operators:
\begin{align}
L u =  \Delta u =  \text{div} \nabla u .
\end{align}
Similarly, the variable coefficient Laplacian, which appears in heat diffusion problems, is often defined as:
\begin{align}
L u =  \text{div} (A(x) \nabla u) ,
\end{align}
where $A(x) = (a_{ij})$ is a symmetric and
positive definite matrix.

There are definite advantages to considering the two different representations of $\mathcal{OU}$ separately. As one recalls from PDE classes, the divergence form is most natural for energy methods, based upon integration by parts, and the nondivergence form is most appropriate for the maximum principle.
In the rest of the paper, we will constantly omit the subscript in our notation and refer to it simply as $\mathcal{OU}$ when there is no confusion.

\subsection{Connection to Gaussian measure}
As previously mentioned, let us establish the relationship between the $\mathcal{OU}$ operator to the standard Laplacian. 
To this end, we begin by examining the inner product in the Gaussian-weighted $L^2$-space. For simplicity, we will restrict our discussion to one dimension.
\begin{align}
\inner{\mathcal{OU} f, g}_{L^2 (e^{-\frac{\alpha^2}{2}})} & = \int_{\R} (\Delta_{\alpha} - \alpha \cdot \nabla_{\alpha} ) f \, g  \, e^{-\frac{\alpha^2}{2}} \, d\alpha \\
& = \int_{\R} \nabla_{\alpha} \cdot (\nabla_{\alpha} f e^{-\frac{\alpha^2}{2}} ) g \, d \alpha \\
& = \int_{\R} f \nabla_{\alpha} \cdot (\nabla_{\alpha} g e^{-\frac{\alpha^2}{2}} )  \, d \alpha = \inner{f, \mathcal{OU} g}_{L^2 (e^{-\frac{\alpha^2}{2}})} ,
\end{align}
where we used the identity \eqref{eq div}. 
Now we see that such $\mathcal{OU}$ operator is positive and symmetric and plays the role of the Laplacian on $L^2 (e^{-\frac{\alpha^2}{2}})$ (i.e., a weighted $L^2$-space). 
Similarly, we also have the same property for the divergence version
\begin{align}
\inner{\mathcal{OU}_{div} f ,g }_{L^2} = \inner{f, \mathcal{OU}_{div} g}_{L^2} .
\end{align}
Moreover, the Hermite polynomials are eigenvectors for the $\mathcal{OU}$ operator with discrete eigenvalues equal to natural numbers. This feature is very similar to the NLS models confined by harmonic potentials, which will be compared in later sections.

\subsection{NLS models confined with $\mathcal{OU}$ operators}

Introducing the $\mathcal{OU}$ operator to NLS would be of significant interest, as such a new model would exhibit certain ``compact-like” behaviors due to the discrete eigenvalues associated with the $\mathcal{OU}$ operator. Consequently, a partially confined model would mimic the waveguide-type behavior without necessarily being restricted to product spaces.
Here for waveguides, we refer to semi-periodic space $\mathbb{R}^m \times \mathbb{T}^n$ ($m,n\geq 1$). We will soon discuss the background of NLS on waveguides, which is an active research area in dispersive PDEs and has been intensively studied in recent decades.

Given the two different variants of the $\mathcal{OU}$ operator discussed earlier, we will list two natural confinement scenarios:
\begin{enumerate}
\item 
Introducing the \textit{divergence form} of $\mathcal{OU}$ operator $\mathcal{OU}_{div}$ to NLS (with power-type nonlinearity)
\begin{align}\label{maineq2}
i\partial_t u+\Delta_x u +\nabla_{\alpha} \cdot (\nabla_{\alpha} u\, e^{-\frac{\alpha^2}{2}} )  = |u|^p u . \tag{Model Div}
\end{align}
\item 
Introducing the non-divergence form of the $\mathcal{OU}$ operator directly to NLS, but with a slight weight modification of the nonlinearity\footnote{There are two motivations for the weight modification: the following transformation to connecting the two models and the nonlinear estimate (see Section \ref{WP}).}
\begin{align}\label{maineq1}
i\partial_t u+\Delta_x u+(\Delta_{\alpha}-\alpha \cdot \nabla_{\alpha})u=w(\alpha)|u|^p u, \tag{Model Non-Div}
\end{align}
where $w(\alpha) = e^{-\frac{p\alpha^2}{2}}$. 
\end{enumerate}
In fact, these are two different models, but directly connected via a multiplication of $e^{-\frac{\alpha^2}{2}}$ to \eqref{maineq1}, Identity \eqref{eq div} gives
\begin{align}
(i\partial_t+\Delta_x )u e^{-\frac{\alpha^2}{2}} + ( \Delta_{\alpha} - \alpha \cdot \nabla_{\alpha} )u e^{-\frac{\alpha^2}{2}}  = |e^{-\frac{\alpha^2}{2}}  u|^p (e^{-\frac{\alpha^2}{2}} u),\\
\implies (i\partial_t+\Delta_x )u e^{-\frac{\alpha^2}{2}} +  \nabla_{\alpha} \cdot ( e^{-\frac{\alpha^2}{2}} \nabla_{\alpha}  u)  = |e^{-\frac{\alpha^2}{2}}  u|^p (e^{-\frac{\alpha^2}{2}} u)
\end{align}

As we can see, \eqref{maineq2} and \eqref{maineq1} are tightly related but \textit{not} exactly equivalent. \eqref{maineq2} describes diffusion with Gaussian-weighted dispersion, while \eqref{maineq1} incorporates a drift term directly, analogous to a confining potential. We will study both of the two cases, i.e. {$\mathcal{OU}$ operator in divergence/non-divergence form} throughout this paper. In Section \ref{7}, we will remark further on this comparison.

For convenience, we consider the ``$\mathcal{OU}$ dimension" be one (i.e. $\alpha \in \mathbb{R}$ in \eqref{maineq2} and  \eqref{maineq1}). Higher dimensional models can be studied in a similar way.

When the ``$\mathcal{OU}$ dimension" is one and Euclidean dimension $d\geq 1$, \eqref{maineq2} reads,
\begin{equation}\label{maineqA}
    (i\partial_t+\Delta_x)u+\partial_{\alpha} \cdot ( e^{-\frac{\alpha^2}{2}} \partial_{\alpha} u )=|u|^p u,
\end{equation}
and \eqref{maineq1} reads
\begin{equation}\label{maineqB}
    (i\partial_t+\Delta_x)u+(\partial_{\alpha \alpha}-\alpha \cdot \partial_{\alpha})u=w(\alpha)|u|^p u,
\end{equation}
where $(x,\alpha)\in \mathbb{R}_x^d \times \mathbb{R}_{\alpha}$.

Since the nonlinear models \eqref{maineq2} and \eqref{maineq1} are newly introduced, our goal of this paper is to derive conservation laws, establish Strichartz estimates, develop well-posedness theory, and explore the existence of finite-time blow-up solutions in the focusing setting.

\subsection{The statement of the main results}
Now, let us state the results of this paper. As mentioned earlier, our results include various aspects: \textit{Strichartz estimates} (Section \ref{Stri}), \textit{well-posedness and small data scattering} (Section \ref{WP}), \textit{interaction Morawetz estimates} (Section \ref{Morawetz}), and \textit{finite-time blow-up results} (Section \ref{foc}). 
In particular, for the well-posedness and small data scattering, we focus on specific models (3D cubic model and 2D quintic model) to illustrate the ideas better; for the other three parts, we consider more general models.

We present a summary of our results along with an outline of the proof strategy.

For \eqref{maineq1}, we prove 
\begin{enumerate}[label=(ND-\Alph*)]
\item 
First, we establish Strichartz estimates for \eqref{maineq1}. We recall the equation with general nonlinearity $F$ as follows,
\begin{equation}\label{sec3: maineq1}
    (i\partial_t+\Delta_x+(\partial_{\alpha \alpha}-\alpha \partial_{\alpha}))u=F,\quad u(0,x,\alpha)=f(x,\alpha)\in H_x^r\mathcal{H}_{\alpha}^{\gamma}(\mathbb{R}_x^d \times \mathbb{R}_{\alpha}),
\end{equation}
where $r,\gamma \in \{0,1\}$. Here $\mathcal{L}_{\alpha}^2$ and $\mathcal{H}_{\alpha}^1$ indicate weighted $L^2$-based norms with respect to $\alpha$-direction, and we will give the notations in Subsection \ref{Notations}, together with the notion of Schr\"odinger admissible pairs. 

We show the following estimate.
\begin{theorem}[Strichartz Estimates for \eqref{maineq1}]\label{mainthm1}
Let $D$ denote $\partial_{x_j}$ ($j=1,2...,d$) and let $k=0,1$. Consider \eqref{sec3: maineq1} and Strichartz pairs $(p,q),(\tilde{p},\tilde{q})$ satisfying the Schr\"odinger admissible relations, then
\begin{equation}\label{rStri1}
    \|D^k u\|_{L^p_tL^q_x\mathcal{L}^2_{\alpha}}\lesssim \|D^k f\|_{L^2_{x}\mathcal{L}^2_{\alpha}}+\|D^k F\|_{L^{\tilde{p}^{'}}_tL^{\tilde{q}^{'}}_x\mathcal{L}^2_{\alpha}},
\end{equation}
and 
\begin{equation}\label{rStri2}
    \|D^k u\|_{L^p_tL^q_x\mathcal{H}^{1}_{\alpha}}\lesssim \|D^k f\|_{\mathcal{H}^{1}_{\alpha}}+\|D^k F\|_{L^{\tilde{p}^{'}}_tL^{\tilde{q}^{'}}_x\mathcal{H}^{1}_{\alpha}}.
\end{equation}   
\end{theorem}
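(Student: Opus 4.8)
The plan is to exploit the tensor structure of the spatial operator $\Delta_x+\mathcal{OU}$ in \eqref{sec3: maineq1}: the operators $\Delta_x$ and $\mathcal{OU}=\partial_{\alpha\alpha}-\alpha\partial_\alpha$ act on disjoint variables and commute, and, as recalled in the introduction, $\mathcal{OU}$ has \emph{discrete} spectrum on $\mathcal{L}^2_\alpha$ with the $\mathcal{L}^2_\alpha$-normalized Hermite functions $\{e_n\}_{n\ge0}$ as an orthonormal eigenbasis, $\mathcal{OU}e_n=-n\,e_n$. Expanding $u=\sum_{n\ge0}u_n(t,x)e_n(\alpha)$ and likewise $f=\sum_n f_n e_n$, $F=\sum_n F_n e_n$, and projecting \eqref{sec3: maineq1} onto each eigenmode reduces the equation to the decoupled family
\[
i\partial_t u_n+\Delta_x u_n-n\,u_n=F_n,\qquad u_n(0)=f_n,\quad n\ge0.
\]
Since each $\partial_{x_j}$ commutes with $\Delta_x$, with $\mathcal{OU}$, and hence with the whole evolution, it suffices to treat $k=0$ and then apply the result to $D^ku$, $D^kf$, $D^kF$; I record this reduction at the outset.

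Next I remove the spectral shift through the gauge $u_n=e^{-int}v_n$. Because $|e^{-int}|=1$ this transformation preserves every space-time norm, and a direct computation turns the mode equation into the \emph{free} Schr\"odinger equation on $\mathbb{R}^d_x$,
\[
i\partial_t v_n+\Delta_x v_n=e^{int}F_n,\qquad v_n(0)=f_n.
\]
The classical homogeneous and inhomogeneous Strichartz estimates on $\mathbb{R}^d$ then yield, for every admissible $(p,q)$ and $(\tilde{p},\tilde{q})$,
\[
\norm{u_n}_{L^p_tL^q_x}=\norm{v_n}_{L^p_tL^q_x}\lesssim \norm{f_n}_{L^2_x}+\norm{e^{int}F_n}_{L^{\tilde{p}'}_tL^{\tilde{q}'}_x}=\norm{f_n}_{L^2_x}+\norm{F_n}_{L^{\tilde{p}'}_tL^{\tilde{q}'}_x},
\]
where the phase is harmless since $|e^{int}|=1$ and, crucially, the implicit constant is the scalar Strichartz constant and is therefore \emph{uniform in $n$}.

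It remains to reassemble the modes. By Parseval in $\mathcal{L}^2_\alpha$ the $\mathcal{L}^2_\alpha$-norm of $u$ equals the $\ell^2_n$-norm of $(u_n)_n$, and similarly for $f$ and $F$. Because $p,q\ge2$ while the dual exponents satisfy $\tilde{p}',\tilde{q}'\le2$, Minkowski's inequality lets me pull the $\ell^2_n$ summation \emph{outside} $L^p_tL^q_x$ on the left and push it \emph{inside} $L^{\tilde{p}'}_tL^{\tilde{q}'}_x$ on the right, so that, $\ell^2_n$-summing the per-mode bound,
\[
\norm{u}_{L^p_tL^q_x\mathcal{L}^2_\alpha}\le\Big\|\,\norm{u_n}_{L^p_tL^q_x}\Big\|_{\ell^2_n}\lesssim\Big\|\,\norm{f_n}_{L^2_x}\Big\|_{\ell^2_n}+\Big\|\,\norm{F_n}_{L^{\tilde{p}'}_tL^{\tilde{q}'}_x}\Big\|_{\ell^2_n}\le\norm{f}_{L^2_x\mathcal{L}^2_\alpha}+\norm{F}_{L^{\tilde{p}'}_tL^{\tilde{q}'}_x\mathcal{L}^2_\alpha},
\]
which is \eqref{rStri1}. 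For \eqref{rStri2} I run the identical argument after inserting the multiplier $(1+n)^{1/2}$ that defines the $\mathcal{H}^1_\alpha$-norm in the Hermite basis (equivalently, replacing $u_n$ by $(1+n)^{1/2}u_n$, using $\norm{\partial_\alpha u}_{\mathcal{L}^2_\alpha}^2=\sum_n n\,|u_n|^2$): since this multiplier is diagonal in $\{e_n\}$ it commutes with both the phase $e^{-int}$ and the $x$-evolution, so the weighted estimate follows verbatim.

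The computations are routine; the conceptual point that must be handled correctly -- and what I regard as the main obstacle -- is that the confined $\alpha$-direction contributes \emph{no} dispersion and so must be controlled purely by orthogonality. Everything hinges on two facts about $\mathcal{OU}$: its spectrum is discrete, and each eigenvalue enters only through the unit-modulus phase $e^{-int}$, leaving the scalar Strichartz constant untouched and uniform across all modes. Getting this uniformity together with the correct directions of the Minkowski exchanges (outward when $p,q\ge2$, inward when $\tilde{p}',\tilde{q}'\le2$) right is the crux; once it is in place, the estimate is simply the $\mathbb{R}^d$ Strichartz estimate carried along in $\mathcal{L}^2_\alpha$ (and in $\mathcal{H}^1_\alpha$).
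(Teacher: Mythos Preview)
Your proof is correct and follows the same route as the paper's: project onto the Hermite eigenbasis of $\mathcal{OU}$ on $\mathcal{L}^2_\alpha$, apply the scalar $\mathbb{R}^d$ Strichartz estimate mode-by-mode (with the spectral shift $-n$ absorbed into a unit-modulus phase, so the constant is uniform in $n$), and reassemble via Minkowski and Parseval. The only point of divergence is the passage to $\mathcal{H}^1_\alpha$ in \eqref{rStri2}: the paper differentiates the equation in $\alpha$ and uses the commutator identity $[\partial_\alpha,\mathcal{OU}]=-\partial_\alpha$, so that $\partial_\alpha u$ again solves an equation of the form \eqref{sec3: maineq1} (with an additional $-1$ shift) and the $\mathcal{L}^2_\alpha$ case applies directly to it; you instead insert the diagonal spectral weight $(1+n)^{1/2}$ and rerun the $\ell^2_n$ argument. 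Both are valid and equivalent here; your version is arguably cleaner since it bypasses the ``lucky'' integer-order commutator the paper highlights, and it makes transparent why the argument would extend to any power of $(1-\mathcal{OU})$, whereas the paper's differentiation trick is specific to one $\alpha$-derivative.
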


To prove Theorem \ref{mainthm1}, we project the solution onto the eigenbasis of the $\mathcal{OU}$ operator (Hermite functions), reducing the problem to a family of Schr\"odinger equations on $\mathbb{R}^d$. The discrete spectrum of $\mathcal{OU}$ allows summing the classical Strichartz estimates uniformly over all modes. The one spatial-derivative (in $\mathcal{OU}$) case can be handled based on a ``lucky" observation.\footnote{This case can be handled due to a favorable structural identity.}\\

\item Based on Strichartz estimates, together with nonlinear estimates in the weighted setting (newly introduced in Section \ref{WP}), we establish the well-posedness and the small data scattering. 
More precisely, we prove nonlinear estimates in Lemma \ref{nonlinear2} and well-posedness and small data scattering for the cubic case of \eqref{maineq1} in Proposition \ref{prop: cubic2}.

We consider the 3D cubic model of \eqref{maineqB} as an example, i.e. 
\begin{equation}\label{maineq3dcubic}
    (i\partial_t+\Delta_x)u+(\partial_{\alpha \alpha}-\alpha \cdot \partial_{\alpha})u=e^{-\alpha^2}|u|^2 u,\quad u(0,x,\alpha)=u_0\in \mathcal{H}_{x,\alpha}^1(\mathbb{R}_x^2 \times \mathbb{R}_{\alpha}).
\end{equation}      
We have the following well posedness result,
\begin{theorem}[Well-posedness for \eqref{maineq1}]\label{mainthm2}
For any $E>0$, if $\|u_0\|_{\mathcal{H}^1_{x,\alpha}}\leq E$, there exists a unique global solution $u$ of \eqref{maineq3dcubic} satisfying
\begin{equation}
 \|u(t)\|_{L_t^{\infty}\mathcal{H}^1_{x,\alpha}} \lesssim \|u_0\|_{\mathcal{H}^1_{x,\alpha}}.  
\end{equation}    
\end{theorem}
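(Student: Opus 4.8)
The plan is to follow the classical two-step scheme for an energy-subcritical defocusing NLS: first establish local well-posedness with a lifespan depending only on the $\mathcal{H}^1_{x,\alpha}$-norm, and then upgrade to a global solution via the conservation laws, whose coercivity in the defocusing regime supplies the uniform-in-time bound. Here the total dimension is $2+1=3$ and the nonlinearity is cubic, so the problem is mass-supercritical but energy-subcritical ($s_c=\tfrac12$); consequently an a priori bound on the $\mathcal{H}^1$-norm is enough to reiterate the local theory indefinitely. The sign of the nonlinearity in \eqref{maineq3dcubic} is defocusing, which is precisely what renders the energy coercive.

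First I would set up the local theory. Fixing a Strichartz-admissible pair $(p,q)$ adapted to the two-dimensional $x$-variable (the $\alpha$-direction carries no dispersion and is simply retained in the weighted norms $\mathcal{L}^2_{\alpha}$, $\mathcal{H}^1_{\alpha}$, exactly as the periodic directions are in the waveguide setting), I would run a contraction-mapping argument for the Duhamel operator
\[
\Phi(u)(t) = e^{it(\Delta_x+\mathcal{OU})}u_0 - i\int_0^t e^{i(t-s)(\Delta_x+\mathcal{OU})}\big(e^{-\alpha^2}|u|^2 u\big)(s)\,ds
\]
on a ball in the resolution space $L^\infty_t\mathcal{H}^1_{x,\alpha}\cap L^p_tW^{1,q}_x\mathcal{H}^1_{\alpha}$ over a short interval $[0,T]$. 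The two inputs are Theorem~\ref{mainthm1}, which controls the linear and Duhamel terms in the mixed Strichartz norms at both regularity levels $k=0,1$ and in both $\mathcal{L}^2_{\alpha}$ and $\mathcal{H}^1_{\alpha}$, and the weighted nonlinear estimate of Lemma~\ref{nonlinear2}, which bounds $e^{-\alpha^2}|u|^2u$ in the dual Strichartz norm by a product of resolution norms times a small power of $T$. The Gaussian weight $e^{-\alpha^2}$ is favorable: it provides decay in $\alpha$ that compensates for the absence of an $\alpha$-uniform Sobolev embedding and lets the trilinear estimate close. A standard Lipschitz/fixed-point computation then yields a unique local solution on $[0,T]$ with $T=T(\|u_0\|_{\mathcal{H}^1_{x,\alpha}})$.

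Next I would establish the conservation laws. Since $\mathcal{OU}$ is self-adjoint and nonnegative on the Gaussian-weighted space $\mathcal{L}^2_{\alpha}=L^2(e^{-\alpha^2/2}\,d\alpha)$ (as shown in the introduction through \eqref{eq div}), pairing the equation against $\bar u\,e^{-\alpha^2/2}$ and taking imaginary parts conserves the weighted mass $M(u)=\int|u|^2 e^{-\alpha^2/2}$, while pairing against $\partial_t\bar u\,e^{-\alpha^2/2}$ and taking real parts conserves the energy
\[
E(u)=\frac12\int\big(|\nabla_x u|^2+|\partial_\alpha u|^2\big)e^{-\frac{\alpha^2}{2}} + \frac14\int e^{-\alpha^2}|u|^4\,e^{-\frac{\alpha^2}{2}}.
\]
In the defocusing case all three terms are nonnegative, so $E(u)$ dominates $\|\nabla_x u\|_{\mathcal{L}^2}^2+\|\partial_\alpha u\|_{\mathcal{L}^2}^2$; combined with mass conservation this gives
\[
\|u(t)\|_{\mathcal{H}^1_{x,\alpha}}^2 \le M(u_0)+2E(u_0) \lesssim \|u_0\|_{\mathcal{H}^1_{x,\alpha}}^2+\|u_0\|_{\mathcal{H}^1_{x,\alpha}}^4,
\]
where the last bound uses a weighted Gagliardo--Nirenberg inequality to control the quartic term in $E(u_0)$ by powers of $\|u_0\|_{\mathcal{H}^1_{x,\alpha}}$ (the extra weight $e^{-\alpha^2}$ only helps). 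Under $\|u_0\|_{\mathcal{H}^1_{x,\alpha}}\le E$ this is a bound depending only on $E$. A standard continuation argument then glues the local solutions: because the local lifespan depends only on the $\mathcal{H}^1$-norm, which never exceeds the a priori bound, the solution extends to all $t$ with the stated uniform estimate.

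The hard part will be the weighted trilinear estimate, namely Lemma~\ref{nonlinear2}: the Gaussian weights interact nontrivially with fractional Leibniz rules and with the $\mathcal{H}^1_{\alpha}$-norm built from $\mathcal{OU}$, and one must verify that the weight $e^{-\alpha^2}$ in the nonlinearity is exactly strong enough to absorb the missing $\alpha$-dispersion while still allowing the Sobolev/Gagliardo--Nirenberg bounds in the weighted space. A secondary technical point is the rigorous justification of the conservation laws, which is cleanest via a regularization/approximation argument: differentiate the conserved quantities along smooth approximate solutions and pass to the limit using the local well-posedness and the continuity of the flow.
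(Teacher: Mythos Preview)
Your proposal is correct and follows essentially the same approach as the paper: local well-posedness via contraction mapping using the Strichartz estimates of Theorem~\ref{mainthm1} together with the weighted nonlinear estimate of Lemma~\ref{nonlinear2}, followed by globalization via the conservation laws of Proposition~\ref{prop: Conserv}. The paper's proof is in fact even more terse---it runs the Picard iteration in $L^4_{t,x}(I)\mathcal{H}^1_{\alpha}$ and simply declares that global well-posedness follows from conservation---so your write-up is if anything more complete, in particular in spelling out the coercivity of mass plus energy and the continuation argument.
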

See Subsection \ref{Notations} for the definition of $\mathcal{H}^1_{x,\alpha}$. This norm is a natural choice since it is compatible with the conservation law and the Strichartz estimate.

The main idea behind Theorem \ref{mainthm2} is based on Strichartz estimates (Theorem \ref{mainthm1}), a contraction mapping argument in the $L^p_t \mathcal{H}^1_\alpha$-type space is employed, leveraging the weighted Sobolev embedding $\mathcal{H}^1_\alpha \hookrightarrow \mathcal{L}^\infty_\alpha$ (Lemma \ref{delta2}) to control the nonlinearity (see Lemma \ref{nonlinear2}).

\begin{remark}
 Unlike the waveguide case where compactness arises from periodicity, our model achieves similar behavior through the spectral properties of the $\mathcal{OU}$ operator, allowing for non-product geometries.    
\end{remark}

\begin{remark}
Theorem \ref{mainthm2} concerns the defocusing case. We note that, for the local well-posedness theory, the result also holds for the focusing case (i.e., adding a negative sign to the nonlinearity in \eqref{maineq3dcubic}). However, more ingredients (such as ground state structures, variation analysis, and threshold assumptions) are needed to investigate the long-time dynamics for the focusing case. See Section \ref{foc} for blow-up results for the focusing case.
\end{remark}
\begin{remark}
In some respects, our $\mathcal{OU}$ models are technically more challenging than both the waveguide setting and the case with a partial harmonic potential. A key difficulty lies in the fact that, unlike the waveguide case, derivatives in the non-Euclidean (i.e., $\alpha$) direction do not commute with the linear Schr\"odinger flow under the $\mathcal{OU}$ operator. While a similar non-commutativity also arises in the harmonic potential case, the latter still enjoys local-in-time dispersive estimates, which are unavailable in our setting. Furthermore, our analysis requires developing new tools in Gaussian-weighted function spaces to accommodate the non-compact confinement structure. We refer to Section \ref{Stri}, Section \ref{WP} and Section \ref{7} for more details and discussions.    
\end{remark}
We also include the small data scattering result, and analogous results for the 2D quintic case in Proposition \ref{prop: quintic1} and Proposition \ref{prop: quintic2}. We also include discussions for \eqref{maineqB}. We refer to Section \ref{WP} for more details.\\

\item 
Interaction Morawetz estimates for \eqref{maineq1} in Proposition \ref{prop Morawetz2} and Corollary \ref{cor Morawetz2}.

\end{enumerate}

For \eqref{maineq2}, we prove
\begin{enumerate}[label=(D-\Alph*)]
\item 
We establish Strichartz estimates for \eqref{maineq2}. 
\begin{theorem}[Strichartz estimate for \eqref{maineq2}]\label{Strichartznew 2}
 Indicate by $D$ for $\partial_{x_j}$ ($j=1,2...,d$) and let $k=0,1$. Consider \eqref{sec3: maineq2} and Strichartz pairs $(p,q),(\tilde{p},\tilde{q})$ satisfying the NLS admissible relations, then we have,
\begin{equation}\label{rStri2 2}    \|D^k u\|_{L^p_tL^q_x L^2_{\alpha}}\lesssim \|D^k f\|_{L^2_xL^2_{\alpha}}+\|D^k F\|_{L^{\tilde{p}^{'}}_tL^{\tilde{q}^{'}}_x L^2_{\alpha}}.
\end{equation}   
\end{theorem}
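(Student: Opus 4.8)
The plan is to treat the $\alpha$-direction abstractly and reduce \eqref{rStri2 2} to the scalar Strichartz estimate for $e^{it\Delta_x}$ on $\mathbb{R}^d_x$ with values in the Hilbert space $L^2_\alpha$. Write the divergence-form operator in the $\alpha$-variable as $\mathcal{OU}_{div}=\partial_\alpha(e^{-\frac{\alpha^2}{2}}\partial_\alpha)$. Since $\Delta_x$ and $\mathcal{OU}_{div}$ act on different variables they commute, so the linear propagator for \eqref{sec3: maineq2} factorizes as $U(t)=e^{it\Delta_x}\,e^{it\,\mathcal{OU}_{div}}$. The key structural fact I will use is that $e^{it\,\mathcal{OU}_{div}}$ is a strongly continuous unitary group on $L^2_\alpha=L^2(\mathbb{R}_\alpha,d\alpha)$; granting this, the classical $TT^*$/Keel--Tao argument for $e^{it\Delta_x}$ goes through verbatim with $L^2_\alpha$ replacing the scalar field at every step. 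This mirrors the strategy behind Theorem \ref{mainthm1}, but is in fact simpler: because the statement involves only the $x$-derivatives $D=\partial_{x_j}$ and the unweighted norm $L^2_\alpha$, none of the $\alpha$-direction non-commutation that complicates the $\mathcal{H}^1_\alpha$ estimate appears here.

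First I would establish the unitarity of $e^{it\,\mathcal{OU}_{div}}$ on $L^2_\alpha$. The symmetry of $\mathcal{OU}_{div}$ on $L^2_\alpha$ was already recorded in the introduction, and integration by parts gives $\langle -\mathcal{OU}_{div}u,u\rangle_{L^2_\alpha}=\int_{\mathbb{R}}e^{-\frac{\alpha^2}{2}}|\partial_\alpha u|^2\,d\alpha\ge 0$. Thus the nonnegative quadratic form $Q(u)=\int_{\mathbb{R}}e^{-\frac{\alpha^2}{2}}|\partial_\alpha u|^2\,d\alpha$ is closable and defines, via the Friedrichs construction, a nonnegative self-adjoint realization of $-\mathcal{OU}_{div}$ on $L^2_\alpha$; by Stone's theorem $e^{it\,\mathcal{OU}_{div}}$ is then a unitary group. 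I expect this to be the main obstacle. The diffusion coefficient $e^{-\frac{\alpha^2}{2}}$ degenerates at infinity, so $\mathcal{OU}_{div}$ is a degenerate elliptic operator whose $L^2_\alpha$-spectral theory is \emph{not} the discrete Hermite picture available for the non-divergence form (indeed constants formally solve $\mathcal{OU}_{div}\psi=0$ but lie outside $L^2_\alpha$, so a naive eigenfunction expansion is unavailable). The form/Friedrichs viewpoint sidesteps this by requiring only self-adjointness rather than an explicit diagonalization.

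With unitarity in hand I would feed the two standard inputs into the abstract Strichartz machinery. Energy conservation is immediate: both factors of $U(t)$ are unitary, so $\|U(t)f\|_{L^2_xL^2_\alpha}=\|f\|_{L^2_xL^2_\alpha}$. For the dispersive bound, writing $K_t(x)=(4\pi i t)^{-d/2}e^{i|x|^2/(4t)}$ for the free kernel and using Minkowski's inequality together with the $L^2_\alpha$-isometry of $e^{it\,\mathcal{OU}_{div}}$,
\[
\|U(t)f\|_{L^\infty_xL^2_\alpha}\le \|K_t\|_{L^\infty_x}\left\|\,\|f(y,\cdot)\|_{L^2_\alpha}\,\right\|_{L^1_y}\lesssim |t|^{-d/2}\|f\|_{L^1_xL^2_\alpha}.
\]
These are exactly the hypotheses of the Keel--Tao theorem with target Hilbert space $L^2_\alpha$, which yields the homogeneous estimate for $U(t)f$ and, via the Christ--Kiselev lemma for the retarded Duhamel term, the full inhomogeneous estimate for any admissible $(p,q),(\tilde p,\tilde q)$.

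Finally, to recover \eqref{rStri2 2} for $k=1$, I note that $D=\partial_{x_j}$ commutes with both $e^{it\Delta_x}$ and $e^{it\,\mathcal{OU}_{div}}$, hence with the full propagator and the Duhamel operator. Thus $D^k u$ solves \eqref{sec3: maineq2} with data $D^k f$ and forcing $D^k F$, and applying the $k=0$ estimate to $D^k u$ gives the claim. In this way the entire argument reduces the divergence-form Strichartz estimate to the self-adjointness of $\mathcal{OU}_{div}$ on $L^2_\alpha$ plus the scalar dispersive theory for $e^{it\Delta_x}$.
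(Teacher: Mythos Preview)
Your proposal is correct and follows essentially the same route as the paper's own proof: factor the propagator as $e^{it\Delta_x}e^{it\,\mathcal{OU}_{div}}$ using commutativity, use that $e^{it\,\mathcal{OU}_{div}}$ is an $L^2_\alpha$-isometry to obtain the $L^2_{x,\alpha}$ energy bound and the $L^1_xL^2_\alpha\to L^\infty_xL^2_\alpha$ dispersive bound, and then invoke Keel--Tao's abstract Strichartz theorem with $H=L^2_{x,\alpha}$. If anything you are more careful than the paper, which simply asserts the $L^2_\alpha$-boundedness of $e^{it\,\mathcal{OU}_{div}}$ without further comment; your Friedrichs/Stone justification of self-adjointness fills that gap, and your handling of the $k=1$ case via commutation of $\partial_{x_j}$ with the full propagator is exactly right.
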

Different from Theorem \ref{mainthm1}, the proof follows from standard Keel-Tao's machinery \cite{keeltaoendpoint} and a dispersive-type estimate. See Section \ref{Stri} for more details. We also discuss the well-posedness results in Section \ref{WP}.

\item Interaction Morawetz estimates for \eqref{maineq2} in Proposition \ref{prop Morawetz} and Corollary \ref{cor Morawetz}.

\item 
Next, we consider the \textit{focusing} NLS with $\mathcal{OU}$ operator in \eqref{maineq2} as follows,
\begin{align}\label{eq focusing1}
i\partial_t u+\Delta_x u + \nabla_{\alpha} \cdot (\nabla_{\alpha} u\, e^{-\frac{\alpha^2}{2}} ) = - \abs{u}^p u,  \qquad (t,x , \alpha) \in [0,T) \times \R^d \times \R.
\end{align}
We have,
\begin{theorem}[Finite-time blow-up for \eqref{maineq2}]\label{mainthm3}
Consider \eqref{eq focusing1} with $d\geq 1, p\geq \frac{4}{d}$, the initial data $u_0 \in H_{x, \alpha}^1$, $\|xu_0(x,\alpha)\|_{L^2_{x,\alpha}} < \infty$, and assume the energy $E(u_0) <0$, then for the maximal lifespan $[0,T)$, we have $T < \infty$.
\end{theorem}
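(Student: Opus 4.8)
The plan is to run a Glassey-type virial (second-moment) argument, in which the decisive structural feature is that the virial weight lives only in the Euclidean variable $x$, so the $\mathcal{OU}$ drift is \emph{transparent} to the second moment. I would work with the conserved mass $M(u)=\int |u|^2\,dx\,d\alpha$ and the conserved energy
\begin{equation}
E(u) = \frac12\int |\nabla_x u|^2\,dx\,d\alpha + \frac12\int e^{-\frac{\alpha^2}{2}}|\nabla_\alpha u|^2\,dx\,d\alpha - \frac{1}{p+2}\int |u|^{p+2}\,dx\,d\alpha,
\end{equation}
both taken against the flat measure $dx\,d\alpha$, with respect to which the divergence-form operator $L=\Delta_x+\nabla_\alpha\cdot(e^{-\frac{\alpha^2}{2}}\nabla_\alpha)$ is self-adjoint. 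I then introduce the variance $V(t)=\int |x|^2|u(t,x,\alpha)|^2\,dx\,d\alpha$, which is finite at $t=0$ by the hypothesis $\norm{xu_0}_{L^2_{x,\alpha}}<\infty$.

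Next I compute derivatives. Because $|x|^2$ depends only on $x$, integration by parts in $\alpha$ against the Gaussian weight produces only the real quantity $-|x|^2\int e^{-\frac{\alpha^2}{2}}|\nabla_\alpha u|^2\,d\alpha$, so the $\mathcal{OU}$ part drops out of $V'$ and one gets $V'(t)=4\,\im\int \bar u\, x\cdot\nabla_x u\,dx\,d\alpha$. The key observation for the second derivative is that the multiplier $|x|^2$ and the dilation generator $x\cdot\nabla_x$ both commute with $L_\alpha=\nabla_\alpha\cdot(e^{-\frac{\alpha^2}{2}}\nabla_\alpha)$, since one acts in $x$ and the other in $\alpha$; hence $L_\alpha$ contributes nothing to the iterated commutator, and only the $x$-Laplacian and the (unweighted) nonlinearity of \eqref{eq focusing1} survive. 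This yields exactly the flat-space identity
\begin{equation}
V''(t) = 8\int |\nabla_x u|^2\,dx\,d\alpha - \frac{4dp}{p+2}\int |u|^{p+2}\,dx\,d\alpha.
\end{equation}

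I then eliminate the $L^{p+2}$ term via energy conservation, which is where the extra $\alpha$-kinetic energy reappears:
\begin{equation}
V''(t) = (8-2dp)\int|\nabla_x u|^2\,dx\,d\alpha - 2dp\int e^{-\frac{\alpha^2}{2}}|\nabla_\alpha u|^2\,dx\,d\alpha + 4dp\,E(u_0).
\end{equation}
Crucially — and this is precisely why the $\mathcal{OU}$ confinement does not spoil the blow-up — the extra $\alpha$-direction term enters with a \emph{negative} sign, so it only reinforces the estimate. For $p\ge 4/d$ we have $8-2dp\le 0$, whence together with $E(u_0)<0$,
\begin{equation}
V''(t) \le 4dp\,E(u_0) < 0 .
\end{equation}
Integrating twice gives $V(t)\le V(0)+V'(0)t+2dp\,E(u_0)\,t^2$, a downward parabola, which forces the nonnegative quantity $V(t)$ to reach $0$ in finite time; this is impossible unless the maximal lifespan satisfies $T<\infty$.

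The main obstacle is not the algebra but the \emph{rigorous} justification of the virial identity: one must propagate the finite-variance condition $\norm{xu(t)}_{L^2_{x,\alpha}}<\infty$ along the flow and differentiate $V$ twice under the integral sign for merely $H^1$-regular solutions. I would handle this by the standard localized-virial device, replacing $|x|^2$ with a smooth cutoff $\phi_R(x)$ that coincides with $|x|^2$ on $\{|x|\le R\}$ and has bounded derivatives, computing $V_R''$ (which is legitimate for finite-energy solutions), estimating the commutator error terms supported on $\{|x|\gtrsim R\}$ by the conserved mass and energy, and letting $R\to\infty$. The Gaussian weight $e^{-\frac{\alpha^2}{2}}$ guarantees that every $\alpha$-integration by parts converges and produces no boundary contribution at $\alpha=\pm\infty$; this is exactly where the membership $u\in H^1_{x,\alpha}$, ensuring $\int e^{-\frac{\alpha^2}{2}}|\nabla_\alpha u|^2<\infty$, is used.
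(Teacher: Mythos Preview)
Your proposal is correct and follows essentially the same route as the paper: a partial virial argument with weight $|x|^2$ acting only in the Euclidean variable, the observation that the $\mathcal{OU}$ contribution to $V''$ vanishes, and the conclusion via the convexity of $V$ under $p\ge 4/d$ and $E(u_0)<0$. Your rewriting of $V''$ (eliminating the $L^{p+2}$ norm in favor of $\|\nabla_x u\|_{L^2}^2$) is algebraically equivalent to the paper's version (which instead keeps the $L^{p+2}$ term with coefficient $\frac{dp-4}{4(p+2)}$), and your added discussion of the localized-virial regularization is a rigor point the paper leaves implicit.
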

For the definition of the energy $E$, see Proposition \ref{prop: Conserv}.

To see blow-up solutions, we need a partial Virial argument that adapts the classical approach by selecting a weight $|x|^2$ independent of $\alpha$, exploiting the Gaussian measure's decay to handle the $\mathcal{OU}$ term.

\begin{remark}
As a comparison with the classical blow-up theory for NLS, a ``partial Virial identity" is investigated and utilized in the proof of Theorem \ref{mainthm3}.   
\end{remark}
\end{enumerate}

To the best of the authors' knowledge, the current paper is the \textbf{first} work to study the dynamics of NLS with $\mathcal{OU}$ operators in both the divergence vs. non-divergence forms, like \eqref{maineq2} and \eqref{maineq1}. The Strichartz estimates and the Gaussian-weighted interaction Morawetz estimates in this setting are also new and have their own interests. There are numerous problems that can be further studied regarding this topic, which will be discussed in Section \ref{7}.

We hope the techniques developed here can be applied to other stochastic-inspired confinement mechanisms, including fractional $\mathcal{OU}$ operators or degenerate diffusions, possibly extending the interplay between stochastic analysis and nonlinear wave dynamics.

\subsection{Comparisons with models on waveguides and models with compact-like behaviors}
The spectral properties of the $\mathcal{OU}$ operator make it behave like a compact dimension, hence we will first compare it with the classical waveguide setting.

The waveguide manifold $\R^m \times \T^{d-m}$ is a product of the Euclidean space with tori (rational or irrational), and is of particular interest in nonlinear optics of telecommunications. In modern backbone networks, data signals are almost exclusively transmitted by optical carriers in fibers (a special case of a waveguide).  Applications like the internet demand an increase in the available bandwidth in the network and a reduction of costs for the transmission of data. The nonlinear Schr\"odinger type of model is of particular importance in the description of nonlinear effects in optical fibers.

In physics, optical waveguides confine and guide light by constraining waves to travel along a certain desired path. One interesting feature of studying the behavior of solutions on the waveguide manifold is that it mixed inherits properties from those on classical Euclidean spaces and tori, which captures well the physics behind it. The waveguide manifold’s product structure leads to NLS solutions inheriting mixed properties from Euclidean and tori settings.

The Euclidean case has been extensively studied, and its theory is well established, particularly for defocusing nonlinearities \cite{CW, CKSTTIM, Dod3d, Dod1d, Dod4d, BourHL, Dod2d, B2, KM06, B99, KVnotes, RV, V, Taobook}. Moreover, we refer to \cite{HTT1, IPT3, KV16, Yue} for a few works on tori. Due to the nature of such product spaces, we see that NLS posed on the waveguide manifold mixed inheriting properties from those on Euclidean spaces and tori. The techniques from Euclidean theory (e.g., Strichartz estimates) and compact manifolds (e.g., spectral decomposition) are synthesized to handle the waveguide problems. We refer to \cite{R2T,CGZ,CZZ,HP,HTT1,HTT2,IPT3,IPRT3, Z1,Z2,ZZ}  for some NLS results in the waveguide setting. See also \cite{forcella2020large,luo2025well,yu2024global,lyu2025strichartz} for some other equations in the waveguide setting as examples.
Based on the aforementioned existing results and theories, we expect that the solution of NLS exists globally and scatters in the range $\frac{4}{m}  \leq p \leq \frac{4}{d-2} $ with data in the energy space $H^1$ on the waveguide manifold $\R^m \times \T^{d-m}$\footnote{It is easy to see that $d-m \leq 2$ is required if the scattering behavior is expected. In other words, there cannot be ``too many" tori since that would cause insufficient dispersion. It is possible to enlarge the scattering range if one adds additional assumptions for the initial data, such as the higher regularity condition or in a weighted space.}. We refer to \cite{takaoka20012d,Barron,MR4782142} and the references therein for Strichartz-type estimates for Schr\"odinger equations in the context of waveguide manifolds. Finally, we refer to the introductions of some recent works \cite{kwak2024critical,kwak2024global,deng2024bilinear} on the topic of NLS on tori/waveguides.

It is interesting to generalize this idea in the sense that one would mimic the waveguide-type behavior by considering models not necessarily posed on product spaces, but admitting certain ``compact-like" behavior in some spatial direction. As an illustration, a well-known model,  NLS confined by the harmonic potential (to convey the idea effectively, we state the model in two spatial dimensions), 
\begin{align}\label{eq HarPot}
i\partial_t u + \Delta u - (\beta^2 \abs{x}^2 +   \abs{\alpha}^2 )u = \pm \abs{u}^p u , \quad \text{where } (x,\alpha) \in \R  \times \R 
\end{align}
arises in diverse physical phenomena, including Bose-Einstein condensates in a laboratory trap \cite{JP, PSS} and the envelope dynamics of a general dispersive wave in a weakly nonlinear medium. The parameter $\beta =0$ or $1$, with $\beta =1$ corresponding to the quadratic potential case, while $\beta =0$ corresponds to the partial harmonic oscillator. 
Due to the spectrum of the harmonic oscillator
\begin{align*}
\mathcal{H} : = \Delta_\alpha - \abs{\alpha}^2 
\end{align*}
being purely discrete (the spectrum of it is given by the set of integers $\{2n + 1, n \in \mathbb{N} \}$), one anticipates that such model \eqref{eq HarPot} with $\omega =0$ behaviors like NLS posed on $\R \times \T$ (where $y$ is the ``compact-like" direction), which has been confirmed in the study on global behaviors of such model \cite{PTV, CGGLS, Jao1, Jao2, antonelli2015scattering}. The large data scattering for \eqref{eq HarPot} with $p=4$ (the quintic case), $\beta=1$ and $\alpha=0$ is recently proven in \cite{ma2024scattering}.

As we mentioned earlier, another operator enjoys a similar ``compact-like" behavior, that is, the $\mathcal{OU}$ operator $
\mathcal{OU}  = \Delta_\alpha - \alpha \cdot \nabla_\alpha
$ (also known as the Hermite operator, since the equation with the operator of the form $D^2- x D$   is used in Hermite's paper \cite{Hermite}).
Tracing back to the classical paper \cite{OU}  by Uhlenbeck and Ornstein, a model of Brownian motion with a drift was presented, which soon came to be called the Ornstein–Uhlenbeck process. 
The aforementioned harmonic oscillator operator is also sometimes called the Hermite operator and is unitarily equivalent to the $\mathcal{OU}$ operator (if the former is considered acting in $L^2$ with respect to Lebesgue measure and the latter acting in $L^2$ with respect to the Gaussian measure). \footnote{The Gaussian weight $e^{-\frac{\alpha^2}{2}}$ induces localization, analogous to a compact domain. This contrast illustrates how $\mathcal{OU}$ drift simulates compact-like localization in an anisotropic fashion.}

Although the $\mathcal{OU}$ operator is well known and has been very popular in physics, and probability,  this model has not been widely introduced in this dispersive community. Our motivation is to introduce this different (nonlinear) model which captures the same phenomena as the harmonic partial confinement but as a drift, not a potential. As expected, the NLS with $\mathcal{OU}$ operator imitates a product manifold space, where the first spatial variable behaves normally in a Euclidean space while the other one acts in a ``compact-like" space. This configuration captures the essential features of waveguide systems while allowing more general geometries, as evidenced by the spectral analogy.

Note that the $\mathcal{OU}$ operator has a number of features similar to those of the Laplace operator, and its semigroup theory and properties are very well developed (see for example \cite{Bog}). As hinted in \cite{Barron, TV1, TV2}, one would consider projecting the equation along the eigenfunctions of the Laplace–Beltrami operator on $\T$ when working on product spaces,  for example, $\R \times \T$, and this results in obtaining a sequence of Schr\"odinger equations on $\R$. For each equation, the corresponding classical Strichartz estimates are perfectly applicable, and then summing up the sequence in an appropriate sense would yield the Strichartz on the product space. 
For the Morawetz type of estimates, one could handle in a similar manner as in \cite{TV2, yu2024global, SYYZ}, that is, treating the compact direction $\T$, roughly speaking, as a free dimension, and only choosing weight functions and computing interaction between the Euclidean variables.

\subsection{Organization of the rest of this paper}
The rest of this paper is organized as follows: in Section \ref{Cons}, we derive the conservation laws; in Section \ref{Stri}, we establish Strichartz estimates for our models (adapting the idea of \cite{TV1,TV2}) for \eqref{maineq1} and Keel-Tao's machinery \cite{keeltaoendpoint} for \eqref{maineq2}); in Section \ref{WP}, we establish the well-posedness theory (based on Strichartz estimates established in Section \ref{Stri} and weighted-type estimates), and the small data scattering; in Section \ref{Morawetz}, we establish Gaussian-weighted interaction-Morawetz type estimates; in Section \ref{foc}, we prove finite-time blow-up results in the focusing setting for \eqref{maineq2}; in Section \ref{7}, we make a summary and give further remarks.

\subsection*{Acknowledgment}
We highly appreciate Prof. Yannick Sire and Prof. Chenjie Fan for many beneficial discussions and suggestions. X. Yu is partially supported by NSF DMS-2306429. H. Yue was supported by the NSF grant of China (No.12301300). Z. Zhao was supported by the NSF grant of China (No. 12271032, 12426205) and the Beijing Institute of Technology Research Fund Program for Young Scholars.

\section{Preliminarily}\label{sec Pre}
In this section, we present notations and recall the classical Strichartz estimates.

\subsection{Notations}\label{Notations}
We write $A \lesssim B$ if there exists a constant $C$ such that $A\leq CB$. We use $A \simeq B$ when $A \lesssim B \lesssim A $. Particularly, we write $A \lesssim_u B$ to express that $A\leq C(u)B$ for some constant $C(u)$ depending on $u$. We use the usual Lebesgue spaces $L^{p}$ and  and Sobolev spaces $H^{s}$.

We regularly refer to the composed spacetime norms
\begin{equation}
    \|u\|_{L^p_tL^q_x(I_t \times \mathbb{R}^m)}=\left(\int_{I_t}\left(\int_{\mathbb{R}^m} |u(t,x)|^q \, dx \right)^{\frac{p}{q}}  \, dt\right)^{\frac{1}{p}}.
\end{equation}

Similarly, we can define the composition of three $L^p$-type norms like $L^p_tL^q_xL^2_{\alpha}$.

Now, we define the weighted function spaces, which are very useful for our models due to their structures. We denote the weighted norm $\mathcal{L}^2_{\alpha}$ as follows,
\begin{equation}\label{eq L^2}
\|u(\alpha)\|^2_{\mathcal{L}^2_{\alpha}}= \int_{ \R} \abs{u (\alpha)}^2 e^{-\frac{\alpha^2}{2}} \,  d\alpha.
\end{equation}
Similarly, $\dot{\mathcal{H}}^1_{\alpha}$ and $\mathcal{H}^1_{\alpha}$ are defined as follows,
\begin{equation}  \|u(\alpha)\|^2_{\dot{\mathcal{H}}^1_{\alpha}}= \int_{ \R} \abs{\partial_{\alpha} u (\alpha)}^2 e^{-\frac{\alpha^2}{2}} \,  d\alpha,
\end{equation}
\begin{equation}  \|u(\alpha)\|^2_{\mathcal{H}^1_{\alpha}}= \int_{ \R} \abs{\partial_{\alpha} u}^2 (\alpha) e^{-\frac{\alpha^2}{2}} \,  d\alpha+\int_{ \R} \abs{ u (\alpha)}^2 e^{-\frac{\alpha^2}{2}} \,  d\alpha.
\end{equation}
As for the composed case, for convenience, we refer to norms: $\mathcal{H}_{x,\alpha}^1(\mathbb{R}_x^d \times \mathbb{R}_{\alpha})$ and $L^p_x\mathcal{L}^2_{\alpha}(\mathbb{R}_x^d \times \mathbb{R}_{\alpha})$ to be
\begin{equation}  \|u(\alpha)\|^2_{\mathcal{H}^1_{x,\alpha}(\mathbb{R}_x^d \times \mathbb{R}_{\alpha})}= \int_{ \R^d} \int_{ \R} \abs{\partial_{\alpha} u (x,\alpha)}^2 e^{-\frac{\alpha^2}{2}} \,  d\alpha dx+\int_{ \R^d} \int_{ \R} \abs{\nabla_{x} u (x,\alpha)}^2e^{-\frac{\alpha^2}{2}}  \,  d\alpha dx+\int_{ \R^d} \int_{ \R} \abs{ u (x,\alpha)}^2e^{-\frac{\alpha^2}{2}}  \,  d\alpha dx,
\end{equation}
 and 
\begin{equation}  \|u(x,\alpha)\|^2_{L^p_x\mathcal{L}^2_{\alpha}}=\big( \int_{\mathbb{R}^d} \big(\int_{ \R} \abs{ u (x,\alpha)}^2 e^{-\frac{\alpha^2}{2}} \,  d\alpha \big)^{\frac{p}{2}} dx\big)^{\frac{1}{p}},
\end{equation}
respectively. Such weighted norms are very useful for our models (see conservation laws in Section \ref{Cons}, Strichartz estimates in Section \ref{Stri} and interaction Morawetz estimates in Section \ref{Morawetz} for more details).
\subsection{Strichartz estimates}
Then we recall the Strichartz pair and the classical Strichartz estimate as follows (see \cite{Taobook, keeltaoendpoint})\footnote{The Euclidean Strichartz estimate will be used as a blackbox to prove Strichartz estimates for our models.}.
\begin{definition} \label{def:Strichartz}
Let $q ,r \in [2, +\infty]$ and $(q,r,d)\neq (2,\infty,2)$. We say $(q,r)$ is an admissible Strichartz pair in $\R^d$ if
\begin{equation*}
\frac{2}{q} + \frac{d}{r} = \frac{d}{2}\;.
\end{equation*}
We say $(q,r)$ is admissible with $s$ ($s>0$) regularity if
\begin{align}
\frac{2}{q} + \frac{d}{r} = \frac{d}{2}-s.
\end{align}
\end{definition}
\begin{lemma}[Strichartz estimate] \label{Strichartz}
Let $(q,r)$ be an admissible Strichartz pair in $\R^d$. Then we have the bound
\begin{equation}
    \|e^{it\Delta} f\|_{L^q_t L^r_x(\mathbb{R}\times \mathbb{R}^d)} \lesssim \|f\|_{L^2(\mathbb{R}^d)}.
\end{equation}
Also, for any two Strichartz pairs $(q_1, r_1)$ and $(q_2,r_2)$, we have
\begin{equation}
    \bigg\| \int_0^t e^{i(t-s)\Delta}F(s)ds \bigg\|_{L^{q_1}_tL^{p_1}_x(\mathbb{R}\times \mathbb{R}^d)} \lesssim \|F\|_{L^{q_{2}^{'}}_{t} L^{p_{2}^{'}}_x(\mathbb{R}\times \mathbb{R}^d)}\;,
\end{equation}
where $q_2'$ and $r_2'$ are conjugates of $q_2$ and $r_2$. 
\end{lemma}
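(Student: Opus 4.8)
The plan is to follow the classical route through the pointwise dispersive estimate and the abstract $TT^{*}$ machinery, handling the non-endpoint and endpoint admissible pairs separately.

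First I would record the two elementary ingredients. Mass conservation $\|e^{it\Delta}f\|_{L^2_x}=\|f\|_{L^2_x}$ is immediate from Plancherel, since $e^{it\Delta}$ is the Fourier multiplier $e^{-it|\xi|^2}$. The dispersive estimate
\[
\|e^{it\Delta}f\|_{L^\infty_x}\lesssim |t|^{-d/2}\|f\|_{L^1_x}
\]
follows from the explicit convolution kernel $(4\pi i t)^{-d/2}e^{i|x|^2/(4t)}$ of $e^{it\Delta}$. Interpolating these two bounds gives, for every $2\le r\le\infty$,
\[
\|e^{it\Delta}f\|_{L^r_x}\lesssim |t|^{-d(\frac12-\frac1r)}\|f\|_{L^{r'}_x}.
\]

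Next I would run the $TT^{*}$ argument. Writing $Tf=e^{it\Delta}f$, the homogeneous bound $\|Tf\|_{L^q_tL^r_x}\lesssim\|f\|_{L^2_x}$ is, by duality, equivalent to boundedness of $TT^{*}F=\int_{\R}e^{i(t-s)\Delta}F(s)\,ds$ from $L^{q'}_tL^{r'}_x$ to $L^q_tL^r_x$. Applying the dispersive bound pointwise in $t$ yields $\|e^{i(t-s)\Delta}F(s)\|_{L^r_x}\lesssim |t-s|^{-d(\frac12-\frac1r)}\|F(s)\|_{L^{r'}_x}$, and the admissibility relation $\frac2q+\frac dr=\frac d2$ is exactly what makes the time exponent $d(\frac12-\frac1r)$ compatible with the one-dimensional Hardy-Littlewood-Sobolev inequality in $t$; this closes the estimate whenever $d(\frac12-\frac1r)<1$, i.e. away from the endpoint. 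The inhomogeneous statement with the retarded integral $\int_0^t$ in place of $\int_{\R}$ then follows from the Christ-Kiselev lemma in these non-endpoint ranges.

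The main obstacle is the endpoint pair $(q,r)=(2,\frac{2d}{d-2})$ in dimension $d\ge 3$, which the Hardy-Littlewood-Sobolev step misses since there $d(\frac12-\frac1r)=1$ and the time convolution becomes of Calder\'on-Zygmund rather than fractional-integral type. Here I would invoke the Keel-Tao scheme: dyadically decompose the time separation into regions $|t-s|\sim 2^j$, control each dyadic block through the bilinear form associated with the $L^2\to L^2$ and $L^1\to L^\infty$ mapping properties, and sum via a real (bilinear) interpolation that recovers the borderline case with the logarithmic loss absorbed by the summation. This is precisely the content of \cite{keeltaoendpoint}, which also yields the endpoint retarded inhomogeneous estimate directly. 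Since the statement is a standard result quoted from \cite{Taobook,keeltaoendpoint}, I would cite these references for the endpoint interpolation rather than reproduce it in full.
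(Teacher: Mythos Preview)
Your sketch is correct and follows exactly the classical route (dispersive estimate, $TT^{*}$, Hardy--Littlewood--Sobolev for the non-endpoint, Keel--Tao for the endpoint) underlying the references \cite{Taobook,keeltaoendpoint}. The paper does not give its own proof of this lemma but simply quotes it from these same sources, so your proposal is entirely consistent with---and in fact more detailed than---what appears in the paper.
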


\section{Conservation Laws}\label{Cons}
In this section, we present the conservation laws for NLS with partial $\mathcal{OU}$ drifts. The conservation laws for both models follow from direct calculations, analogous to the classical NLS case (see \cite{Taobook}). We will cover both of the two models: \eqref{maineq2} and \eqref{maineq1}. We have,
\begin{proposition}\label{prop: Conserv}
The equation \eqref{maineq2} enjoys the following conservation laws,
\begin{itemize}
\item
Mass
\begin{align}
M(u)(t) : = \int_{\R^d \times \R} \abs{u}^2 (t,x , \alpha) \, dx d\alpha  \equiv M(u)(0) ;
\end{align}

\item 
Energy
\begin{align}
E(u)(t) : =  \int_{\R^d \times \R}  \frac{1}{2} \abs{\nabla_x u}^2 +  \frac{1}{2} \abs{\nabla_{\alpha} u}^2 e^{-\frac{\alpha^2}{2}} +  \frac{1}{p+2} \abs{u}^{p+2}  \, dx d\alpha \equiv E(u)(0) .
\end{align}

\end{itemize}
Moreover, the equation \eqref{maineq1} enjoys the following conservation laws,
\begin{itemize}
\item
Mass
\begin{align}
M(u)(t) : = \int_{\R^d \times \R} \abs{u}^2 (t,x , \alpha) e^{-\frac{\alpha^2}{2}} \, dx d\alpha  \equiv M(u)(0) ;
\end{align}
\item 
Energy
\begin{align}
E(u)(t) : =  \int_{\R^d \times \R}  [\frac{1}{2} \abs{\nabla_x u}^2 +  \frac{1}{2} \abs{\nabla_{\alpha} u}^2  +  \frac{1}{p+2} \abs{u}^{p+2} \omega(\alpha) ] e^{-\frac{\alpha^2}{2}}  \, dx d\alpha \equiv E(u)(0) .
\end{align}
\end{itemize}
\end{proposition}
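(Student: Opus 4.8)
The plan is to establish both pairs of identities by the classical differentiate-and-substitute scheme, with the one essential twist that each $\mathcal{OU}$ variant is self-adjoint only with respect to its own natural measure. For \eqref{maineq2} the relevant measure on the $\alpha$-line is Lebesgue, whereas for \eqref{maineq1} it is the Gaussian measure $e^{-\alpha^2/2}\,d\alpha$; this is precisely what dictates the appearance (or absence) of the weight in the statements, and it is the structural identity \eqref{eq div} that converts between the two at the decisive step. Throughout I would argue formally, assuming enough regularity and $\alpha$-decay that differentiation under the integral sign and integration by parts are licensed; the rigorous version then follows by a standard approximation of the data by smooth, rapidly decaying functions together with the local theory of Section \ref{WP}.

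For the mass laws I would differentiate in $t$ and use $2\re(\bar u\,\partial_t u)=\partial_t|u|^2$. Substituting $\partial_t u$ from the equation turns $\f{d}{dt}M$ into $-2\im$ of an integral pairing $\bar u$ against $\Delta_x u+\mathcal{OU}u$ and against the nonlinearity. The point is that each of these pairings is \emph{real}: the Euclidean Laplacian gives $-\int|\nabla_x u|^2$ after integrating by parts in $x$; the nonlinear term gives $\int|u|^{p+2}$ (resp. $\int w\,|u|^{p+2}$), manifestly real; and the drift term is real because, after integrating by parts, it equals $-\int e^{-\alpha^2/2}|\nabla_\alpha u|^2$. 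For \eqref{maineq2} the computation is carried out against $dx\,d\alpha$ and the Gaussian weight is \emph{generated} by the integration by parts in the divergence-form operator; for \eqref{maineq1} it is carried out against $e^{-\alpha^2/2}\,dx\,d\alpha$ and the same real expression emerges once \eqref{eq div} is used to move the weight past $\mathcal{OU}$. Since the total integrand is real, its imaginary part vanishes and $\f{d}{dt}M=0$.

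For the energy laws the scheme is identical but one tests against $\partial_t u$ rather than $u$. Differentiating $E$ and integrating by parts in $x$ and in $\alpha$ (directly in the divergence form, after \eqref{eq div} in the non-divergence form) recombines the three terms of $E$ into $-\re\int \overline{(\Delta_x u+\mathcal{OU}u-w|u|^p u)}\,\partial_t u$ against the appropriate measure, where $w\equiv1$ in the divergence case. By the equation the quantity in parentheses equals $-i\,\partial_t u$, so the integrand becomes $-\re\big(i\,|\partial_t u|^2\big)$, which vanishes pointwise; hence $\f{d}{dt}E=0$. The weight $w(\alpha)=e^{-p\alpha^2/2}$ in \eqref{maineq1} is exactly what makes the potential-energy term differentiate into $w\,|u|^p\,\re(\bar u\,\partial_t u)$ and thus align with the nonlinearity in the equation, so that the cancellation against $i\,\partial_t u$ closes cleanly.

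The only genuine obstacle is bookkeeping rather than conceptual: one must verify that every integration by parts in $\alpha$ produces no boundary contribution. Here the Gaussian weight does the real work---each boundary term carries a factor $e^{-\alpha^2/2}$ and therefore vanishes for the smooth, rapidly decaying approximating solutions to which the formal identities apply, after which a density argument transfers the conclusion to general $\mathcal{H}^1_{x,\alpha}$ data. Keeping track of which of the two measures renders $\mathcal{OU}$ self-adjoint, and invoking \eqref{eq div} at exactly the right step, is the single place where the two models genuinely diverge; once that is settled, both pairs of identities follow from the same short computation.
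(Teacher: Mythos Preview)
Your proposal is correct and follows essentially the same approach as the paper: differentiate the functional in time, substitute the equation, integrate by parts (using the divergence structure directly for \eqref{maineq2} and invoking identity \eqref{eq div} for \eqref{maineq1}), and observe that the resulting integrand is real so its imaginary part vanishes (for mass) or equals $\re(i|\partial_t u|^2)=0$ (for energy). The paper's proof is purely computational and does not spell out the density/approximation step, but otherwise your outline matches it line for line.
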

\begin{remark}
As we can see in Proposition \ref{prop: Conserv}, for the energy of \eqref{maineq2}, there is a weight for the $|\nabla_{\alpha}u|^2$ term. If there is no such weight, the energy of \eqref{maineq2} is comparable with the standard $H^1$-norm. Moreover, for \eqref{maineq1}, the norm $\mathcal{H}^1_{x,\alpha}$ defined in Subsection \ref{Notations} is compatible with the whole energy (the sum of energy and mass). 
\end{remark}

\begin{proof}[Proof of Proposition \ref{prop: Conserv}]
We start with the conservation of mass. 
Taking the time derivative of the mass, we write by \eqref{maineq2} and process the integration by parts,
\begin{align}
\partial_t \int_{\R^d \times \R} \abs{u}^2 (t,x, \alpha)  \, dx d\alpha & = 2 \re \int_{\R^d \times \R} u_t \overline{u} (t,x,\alpha)  \, dx d\alpha \\
& = 2 \re \int_{\R^d \times \R}  -i [\abs{u}^p u  - \Delta_x u - \nabla_{\alpha} \cdot (\nabla_{\alpha} u\, e^{-\frac{\alpha^2}{2}} ) ] \overline{u} \, dx d\alpha \\
& = 2 \im \int_{\R^d \times \R} [\abs{u}^p u  - \Delta_x u - \nabla_{\alpha} \cdot (\nabla_{\alpha} u\, e^{-\frac{\alpha^2}{2}} ) ] \overline{u}  \, dx d\alpha \\
& = 2 \im \int_{\R^d \times \R} [- \Delta_x u - \nabla_{\alpha} \cdot (\nabla_{\alpha} u\, e^{-\frac{\alpha^2}{2}} )] \overline{u}  \, dx d\alpha \\
& = 2 \im \int_{\R^d \times \R} \abs{\nabla_x u}^2  \, dx d\alpha + 2 \im \int_{\R^d \times \R} \nabla_{\alpha} \overline{u} \cdot ( \nabla_{\alpha} u\, e^{-\frac{\alpha^2}{2}})  \, dx d\alpha \\
& = 2 \im \int_{\R^d \times \R}  \abs{\nabla_{\alpha} u}^2 e^{-\frac{\alpha^2}{2}}   \, dx d\alpha =0.
\end{align}

Similarly, for \eqref{maineq1}, using integration by parts, we have,
\begin{align}
\partial_t \int_{\R^d \times \R} \abs{u}^2 (t,x, \alpha) e^{-\frac{\alpha^2}{2}} \, dx d\alpha & = 2 \re \int_{\R^d \times \R} u_t \overline{u} (t,x,\alpha) e^{-\frac{\alpha^2}{2}}  \, dx d\alpha \\
& = 2 \re \int_{\R^d \times \R}  [-i (\abs{u}^p u \omega(\alpha) - \Delta_x u - \Delta_{\alpha} u + \alpha \cdot \nabla_{\alpha} u) \overline{u}] e^{-\frac{\alpha^2}{2}}  \, dx d\alpha \\
& = 2 \im \int_{\R^d \times \R} [(\abs{u}^p u \omega(\alpha) - \Delta_x u - \Delta_{\alpha} u + \alpha \cdot \nabla_{\alpha} u) \overline{u}] e^{-\frac{\alpha^2}{2}}  \, dx d\alpha \\
& = 2 \im \int_{\R^d \times \R} [(- \Delta_x u - \Delta_{\alpha} u + \alpha \cdot \nabla_{\alpha} u) \overline{u}] e^{-\frac{\alpha^2}{2}}  \, dx d\alpha \\
& = 2 \im \int_{\R^d \times \R} \abs{\nabla_x u}^2 e^{-\frac{\alpha^2}{2}}  \, dx d\alpha + 2 \im \int [ - \Delta_{\alpha} u + \alpha \cdot \nabla_{\alpha} u) \overline{u} ] e^{-\frac{\alpha^2}{2}} \, dx d\alpha \\ 
& = 2 \im \int_{\R^d \times \R} [-\nabla_{\alpha} \cdot ( \nabla_{\alpha} u \,e^{-\frac{\alpha^2}{2}}) \overline{u}] \, dx d\alpha \\
& = 2 \im \int_{\R^d \times \R}  \abs{\nabla_{\alpha} u}^2 e^{-\frac{\alpha^2}{2}}   \, dx d\alpha =0,
\end{align}
where we note that we used the following fact in the second last line above
\begin{align}\label{eq OU}
( \Delta_{\alpha} - \alpha \cdot \nabla_{\alpha} )u e^{-\frac{\alpha^2}{2}} = \nabla_{\alpha} \cdot (\nabla_{\alpha} u\, e^{-\frac{\alpha^2}{2}} )  .
\end{align}

Next, we turn to the computation of the conservation of energy. We will calculate the time derivative of the energy. Using integration by parts and \eqref{maineq2}, we have
\begin{align}
\frac{d}{dt} E(u)(t) & = \partial_t \int_{\R^d \times \R}  \frac{1}{2} \abs{\nabla_x u}^2 +  \frac{1}{2} \abs{\nabla_{\alpha} u}^2 e^{-\frac{\alpha^2}{2}} +  \frac{1}{p+2} \abs{u}^{p+2}  \, dx d\alpha \\
& = \re \int_{\R^d \times \R}  \nabla_{x} u \cdot \partial_t \nabla_{x} \overline{u} + \nabla_{\alpha} u \cdot \partial_t \nabla_{\alpha} \overline{u}\, e^{-\frac{\alpha^2}{2}} + \abs{u}^p u \partial_t \overline{u} \, dx d\alpha \\
& = \re \int_{\R^d \times \R} - \Delta_x u  \partial_t \overline{u}  - \nabla_{\alpha} \cdot (\nabla_{\alpha} u\, e^{-\frac{\alpha^2}{2}}) \partial_t \overline{u} + \abs{u}^p u \partial_t \overline{u} \, dx d\alpha \\
& = \re \int_{\R^d \times \R} \partial_t \overline{u} [- \Delta_x u  - \nabla_{\alpha} \cdot (\nabla_{\alpha} u\, e^{-\frac{\alpha^2}{2}})  + \abs{u}^p u  ]  \, dx d\alpha \\
& = \re \int_{\R^d \times \R} \partial_t \overline{u} [i \partial_t u] \, dx d\alpha = 0 .
\end{align}

The case for the model \eqref{maineq1} is also similar. We have
\begin{align}
\frac{d}{dt} E(u)(t) & = \partial_t \int_{\R^d \times \R}  [\frac{1}{2} \abs{\nabla_x u}^2 +  \frac{1}{2} \abs{\nabla_{\alpha} u}^2  +  \frac{1}{p+2} \abs{u}^{p+2} \omega(\alpha) ] e^{-\frac{\alpha^2}{2}} \, dx d\alpha \\
& = \re \int_{\R^d \times \R}  \nabla_{x} u \cdot \partial_t \nabla_{x} \overline{u} \, e^{-\frac{\alpha^2}{2}} + \nabla_{\alpha} u \cdot \partial_t \nabla_{\alpha} \overline{u} \,e^{-\frac{\alpha^2}{2}} + \abs{u}^p u \partial_t \overline{u} \omega(\alpha) e^{-\frac{\alpha^2}{2}} \, dx d\alpha \\
& = \re \int_{\R^d \times \R} - \Delta_x u  \partial_t \overline{u} \, e^{-\frac{\alpha^2}{2}} - \nabla_{\alpha} \cdot (\nabla_{\alpha} u\, e^{-\frac{\alpha^2}{2}}) \partial_t \overline{u} + \abs{u}^p u \partial_t \overline{u} \omega(\alpha) e^{-\frac{\alpha^2}{2}} \, dx d\alpha \\
& = \re \int_{\R^d \times \R} - \Delta_x u  \partial_t \overline{u} \, e^{-\frac{\alpha^2}{2}} - \Delta_{\alpha} u\, e^{-\frac{\alpha^2}{2}} \partial_t \overline{u}  + \nabla_{\alpha} u \cdot \alpha e^{-\frac{\alpha^2}{2}}   \partial_t \overline{u} + \abs{u}^p u \partial_t \overline{u} \omega(\alpha) e^{-\frac{\alpha^2}{2}} \, dx d\alpha \\
& = \re \int_{\R^d \times \R} \partial_t \overline{u} [- \Delta_x u  - \Delta_{\alpha} u + \alpha \cdot \nabla_{\alpha} u + \abs{u}^p u  \omega(\alpha)] e^{-\frac{\alpha^2}{2}}   \, dx d\alpha \\
& = \re \int_{\R^d \times \R} \partial_t \overline{u} [i \partial_t u] \, dx d\alpha = 0 .
\end{align}

The deductions of the conservation laws in Proposition \ref{prop: Conserv} for both of the two models \eqref{maineq2} and \eqref{maineq1} are now complete.
\end{proof}

\section{Strichartz Estimates}\label{Stri}
In this section, we discuss Strichartz estimates for \eqref{maineq2} and \eqref{maineq1}. In particular, we will give the proof for Theorem \ref{mainthm1}.
\subsection{Strichartz Estimates for \eqref{maineq1}}\label{Stri B}
In this subsection, we discuss the Strichartz estimates for \eqref{maineq1} by giving the proof for Theorem \ref{mainthm1}. These estimates are elementary for establishing the well-posedness theory. Unlike periodic/compact manifolds where spectral theory is standard, our method employs Hermite expansions of the $\mathcal{OU}$ operator and constructs time-space estimates via decomposition, which may be of independent interest. Compared to the waveguide case, another obstacle is that the spatial derivative in $\alpha$ does not commute with the linear equation\footnote{The situation for \eqref{maineq2} is even worse. For Theorem \ref{mainthm1} (\eqref{maineq1}), one derivative (in $\alpha$) case can be luckily handled, while the fractional derivative case is more complicated.}. We refer to Lemma \ref{Strichartz} for classical Strichartz estimates in the Euclidean setting, which serve as a blackbox in our proof. 

For convenience, we recall the equation \eqref{sec3: maineq1} as follows,
\begin{equation}
    (i\partial_t+\Delta_x+(\partial_{\alpha \alpha}-\alpha \partial_{\alpha}))u=F,\quad u(0,x,\alpha)=f(x,\alpha)\in H_x^r\mathcal{H}_{\alpha}^{\gamma}(\mathbb{R}_x^d \times \mathbb{R}_{\alpha}),
\end{equation}
where $r,\gamma \in \{0,1\}$. We give the proof for Theorem \ref{mainthm1} as follows.

\begin{proof}[Proof of Theorem \ref{mainthm1}]
We consider the non-derivative first (i.e. \eqref{rStri1} with $k=0$). We decompose the three functions (nonlinear solution, nonlinearity, initial data),
\begin{equation}
    u(t,x,\alpha), F(t,x,\alpha) \textmd{ and } f(x,\alpha)
\end{equation}
with respect to the orthonormal basis of $\mathcal{L}^2_{\alpha}(\mathbb{R})$ given by the associated eigenfunctions (Hermite polynomials) respectively, such that
\begin{align}
u(t,x,\alpha)&=\sum_j u_j(t,x) \phi_j(\alpha),\\
F(t,x,\alpha)&=\sum_j F_j(t,x) \phi_j(\alpha),\\
f(x,\alpha)&=\sum_j f_j(x) \phi_j(\alpha),
\end{align}
and notice that $u_j(t,x)$, $F_j(t,x)$ and $f_j(t,x)$ are related by the new Cauchy
problem as follows,
\begin{equation}\label{stric2}
 (i\partial_t+\Delta_x-\lambda_j)u_j=F_j\quad (t,x)\in \mathbb{R}_t \times  \mathbb{R}_x^d  
\end{equation}
with
\begin{equation}
   u_j(0,x)=f_j(x).
\end{equation}
Applying the usual Strichartz estimate on $\mathbb{R}^d$ (Lemma \ref{Strichartz}), we have
\begin{equation}
\|u_j(t,x)\|_{L^p_tL^q_x} \lesssim \|f_j\|_{L_x^2}+\|F_j\|_{L^{\tilde{p}^{'}}_tL^{\tilde{q}^{'}}_x} 
\end{equation}
and hence summing in $j$ the squares we obtain
\begin{equation}
\|u_j(t,x)\|_{l^2_j L^p_tL^q_x} \lesssim \|f\|_{L_x^2\mathcal{L}^2_{\alpha}}+\|F_j\|_{l^2_jL^{\tilde{p}^{'}}_tL^{\tilde{q}^{'}}_x}.     
\end{equation}
Then by the Minkowski, we obtain
\begin{equation}
\|u_j(t,x)\|_{ L^p_tL^q_xl^2_j} \lesssim \|f\|_{L_x^2\mathcal{L}^2_{\alpha}}+\|F_j\|_{L^{\tilde{p}^{'}}_tL^{\tilde{q}^{'}}_xl^2_j},
\end{equation}
with the Plancharel identity gives
\begin{equation}
\|u\|_{L^p_tL^q_x\mathcal{L}^2_{\alpha}}\lesssim \|f\|_{L^2_{x}\mathcal{L}^2_{\alpha}}+\|F\|_{L^{\tilde{p}^{'}}_tL^{\tilde{q}^{'}}_x\mathcal{L}^2_{\alpha}}.
\end{equation}
The derivative case\footnote{We need to emphasize that, the non-integer Sobolev regularity case is non-trivial, which is very different from the waveguide case \cite{TV1,TV2}.} follows in the same way. Taking the derivative for both sides in \eqref{stric2} first, we have
\begin{equation}
    (i\partial_t+\Delta_x+(\partial_{\alpha \alpha}-\alpha \partial_{\alpha})-1)(\partial_{\alpha} u)=\partial_{\alpha}F.   
\end{equation}
Then one can repeat the analysis for the non-derivative case one more time to obtain the desired estimate.

We now complete the proof of Theorem \ref{mainthm1}.
\end{proof}

\begin{remark}
 We note that the Strichartz estimates have similar tastes with the Strichartz estimates for the waveguide case (see \cite{TV1,TV2}). The main idea is to decompose the $\mathcal{OU}$ direction and apply the usual Strichartz estimates for each component.\footnote{ This decomposition method is useful for many other models, such as NLS with a partial harmonic potential (see \cite{antonelli2015scattering}) and NLS on waveguide manifolds (see \cite{TV1,TV2}). For the waveguide case, the tori component can be replaced by a general compact manifold. One may compare this method with another method mentioned in \cite{antonelli2015scattering} (see Proposition 3.1.): fixing the partial harmonic potential direction (or tori direction for the waveguide case) by $L^2$-norm, then using the dispersive estimate and applying Keel-Tao's scheme \cite{keeltaoendpoint} to obtain the desired Strichartz estimate.} Moreover, it is natural and trivial to extend it to the ``arbitrary $\mathcal{OU}$ dimensional case". The $\mathcal{OU}$ dimension does not differ. 
\end{remark}

\begin{remark}
The norms in Theorem \ref{mainthm1} concern \textit{Gaussian-weighted norms} regarding the $\mathcal{OU}$ direction, which is another difference from the waveguide case or the harmonic potential case. Based on Strichartz estimates, one expects to establish  well-posedness theory for \eqref{maineq1} with the proper choice of $p$. See Section \ref{WP} for more details.    
\end{remark}
\begin{remark}
While our estimates cover integer-order derivatives in the $\alpha$ ($\mathcal{OU}$)-direction, obtaining optimal Strichartz estimates for fractional derivatives remains an interesting open problem, which may reveal finer dispersive regularity, particularly in comparison with the waveguide case.
\end{remark}

\subsection{Strichartz Estimates for \eqref{maineq2}}

We now turn to \eqref{maineq2}. For convenience, we use $f$ for the initial data and $F$ for the general nonlinearity. We consider the $\mathcal{OU}$ direction is one dimensional and the generalization to the higher dimensional case is natural. We will use a different method and we will explain why the previous method is not adaptable for this case.

We consider \eqref{maineq2}. We recall the equation with general nonlinearity $F$ as follows.
\begin{align}\label{sec3: maineq2}
i\partial_t u+\Delta_x u +\partial_{\alpha} \cdot (\partial_{\alpha} u\, e^{-\frac{\alpha^2}{2}} ) 
 = F ,\quad u(0,x,\alpha)=f(x,\alpha)\in H^{\gamma}_xL^2_{\alpha}(\mathbb{R}_x^d \times \mathbb{R}_{\alpha}),
\end{align}
where $\gamma=0,1$. We have,
\begin{lemma}[Strichartz estimate]\label{Strichartznew 2}
 Indicate by $D$ for $\partial_{x_j}$ ($j=1,2...,d$) and let $k=0,1$. Consider \eqref{sec3: maineq2} and Strichartz pairs $(p,q),(\tilde{p},\tilde{q})$ satisfying the NLS admissible relations, then we have,
\begin{equation}\label{rStri2 2}    \|D^k u\|_{L^p_tL^q_x L^2_{\alpha}}\lesssim \|D^k f\|_{L^2_xL^2_{\alpha}}+\|D^k F\|_{L^{\tilde{p}^{'}}_tL^{\tilde{q}^{'}}_x L^2_{\alpha}}.
\end{equation}   
\end{lemma}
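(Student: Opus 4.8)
The plan is to abandon the Hermite expansion used for Theorem \ref{mainthm1} and instead run the abstract Keel--Tao scheme \cite{keeltaoendpoint} with the $\alpha$-variable frozen as an auxiliary Hilbert space $H := L^2_\alpha$. The reason the spectral method does not transfer is precisely the weight mismatch recorded in \eqref{eq div}: since $\mathcal{OU}_{div}\phi_j = e^{-\frac{\alpha^2}{2}}\,\mathcal{OU}\phi_j = -\lambda_j e^{-\frac{\alpha^2}{2}}\phi_j$, the Hermite eigenfunctions $\phi_j$ do \emph{not} diagonalize the divergence-form operator, and the unweighted $L^2_\alpha$ norm appearing in \eqref{rStri2 2} does not split as $\sum_j|u_j|^2$ under the $\phi_j$-expansion. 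Thus the Plancherel identity, which was the engine behind the mode-by-mode summation, is unavailable here, and one must work directly with the propagator.

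First I would assemble the two ingredients Keel--Tao requires, treating $\alpha$ as a fiber valued in $H=L^2_\alpha$. Because $-\Delta_x$ and $-\mathcal{OU}_{div}$ are commuting nonnegative self-adjoint operators on $L^2_xL^2_\alpha$ (self-adjointness of $\mathcal{OU}_{div}$ on the \emph{unweighted} $L^2_\alpha$ is exactly the inner-product computation recorded after \eqref{eq div}), Stone's theorem makes $U(t) := e^{it(\Delta_x+\mathcal{OU}_{div})}$ a unitary group, giving the energy bound $\|U(t)f\|_{L^2_xL^2_\alpha}=\|f\|_{L^2_xL^2_\alpha}$. The structural observation that drives everything is that $U(t)$ factorizes as $e^{it\Delta_x}e^{it\mathcal{OU}_{div}}$, where $e^{it\mathcal{OU}_{div}}$ acts on $\alpha$ alone and is therefore an isometry of $H=L^2_\alpha$ fiberwise in $x$.

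Second, and this is the heart of the matter, I would prove the dispersive estimate $\|U(t)f\|_{L^\infty_xL^2_\alpha}\lesssim|t|^{-d/2}\|f\|_{L^1_xL^2_\alpha}$. Setting $g := e^{it\mathcal{OU}_{div}}f$, fiberwise unitarity gives $\|g(y,\cdot)\|_{L^2_\alpha}=\|f(y,\cdot)\|_{L^2_\alpha}$, and then $U(t)f(x,\cdot)=\int_{\R^d}K_t(x-y)\,g(y,\cdot)\,dy$ with the scalar free kernel satisfying $|K_t(z)|=(4\pi|t|)^{-d/2}$. Applying the vector-valued (Minkowski) inequality in $L^2_\alpha$ and then taking $\sup_x$ yields the claim, and the decay rate $d/2$ is identical to the Euclidean case precisely because the $\alpha$-propagator contributes no growth. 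With the energy and dispersive estimates in hand, the Keel--Tao machinery — whose $TT^*$, Hardy--Littlewood--Sobolev, and real-interpolation steps all carry over verbatim to $H$-valued functions — produces the homogeneous and the retarded inhomogeneous Strichartz estimates for every admissible pair, which is \eqref{rStri2 2} with $k=0$.

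Finally, the derivative case $k=1$ is immediate, and is exactly where \eqref{maineq2} is \emph{easier} than \eqref{maineq1}: $D=\partial_{x_j}$ commutes with $\Delta_x$ and acts trivially in $\alpha$, hence commutes with the whole propagator, so applying $D$ to the Duhamel representation of \eqref{sec3: maineq2} reduces $k=1$ to the $k=0$ estimate applied to the data $Df$ and nonlinearity $DF$; no analogue of the ``lucky'' commutator identity from Theorem \ref{mainthm1} is needed, since we never differentiate in the $\mathcal{OU}$ direction. I expect the only genuine obstacle to be the dispersive step — specifically, verifying that freezing $L^2_\alpha$ as a fiber is legitimate, i.e. that $e^{it\mathcal{OU}_{div}}$ is an honest $L^2_\alpha$-isometry that commutes with the $x$-convolution — together with confirming that the Keel--Tao argument is insensitive to replacing scalar targets by a fixed Hilbert space.
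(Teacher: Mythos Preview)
Your proposal is correct and follows essentially the same approach as the paper: both factorize the propagator as $e^{it\Delta_x}e^{it\mathcal{OU}_{div}}$ via commutativity, establish the $L^2_{x,\alpha}$ energy bound and the $L^1_xL^2_\alpha \to L^\infty_xL^2_\alpha$ dispersive estimate (the paper states the latter without the Minkowski justification you supply), and then invoke Keel--Tao with the $\alpha$-fiber playing the role of the target Hilbert space. Your explanation of why the Hermite-expansion route fails and your handling of the $k=1$ case are in fact more explicit than the paper's own treatment.
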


\begin{proof}[Proof of Lemma \ref{Strichartznew 2}]
For convenience, we write the operator $Pu=\partial_{\alpha} \cdot (\partial_{\alpha} u\, e^{-\frac{\alpha^2}{2}} )$. First we note
\begin{equation}
e^{it(\Delta_x+P)}=e^{itP}e^{it\Delta_x}=e^{it\Delta_x}e^{itP}.    
\end{equation}
Using this observation, the assumption that is bounded on for positive time, and the standard properties of the Schr\"odinger group in the Euclidean setting, 
\begin{equation}
  \|e^{it(\Delta_x+P)} u_0\|_{L^{2}_{x,\alpha}} \lesssim \|u_0\|_{L^{2}_{x,\alpha}} ,
\end{equation}
and
\begin{equation}
  \|e^{it(\Delta_x+P)} u_0\|_{L^{\infty}_xL^2_{\alpha}} \lesssim t^{\frac{d}{2}} \|u_0\|_{L^1_xL^2_{\alpha}}.    
\end{equation}
Invoking Keel-Tao's machinery (\cite{keeltaoendpoint}, Theorem 10.1), with
\begin{equation}
B_0=H=L^{2}_{x,\alpha},B_1=L^1_xL^2_{\alpha},   
\end{equation}
we then have the estimates in Lemma \ref{Strichartznew 2}.
\end{proof}
In the end of this section, we now remark on ``\textit{why we cannot use the same `Tzvetkov-Visciglia' method as \eqref{maineq1} for \eqref{maineq2}}'', even for the non-derivative first. 

Similarly, we decompose the three functions,
\begin{equation}
    u(t,x,\alpha), F(t,x,\alpha) \textmd{ and } f(x,\alpha),
\end{equation}
with respect to the orthonormal basis of $\mathcal{L}^2_{\alpha}(\mathbb{R})$ given by the associated eigenfunctions $\{\phi_j(\alpha)\}$ (which are Hermite polynomials) such that
\begin{align}
u(t,x,\alpha)&=\sum_j u_j(t,x) \phi_j(\alpha),\\
F(t,x,\alpha)&=\sum_j F_j(t,x) \phi_j(\alpha),\\
f(x,\alpha)&=\sum_j f_j(x) \phi_j(\alpha).
\end{align}
 In view of \eqref{sec3: maineq2} and Identity \eqref{eq div}, we see that $\phi_j(\alpha)$ satisfies 
\begin{equation}
\nabla_{\alpha} \cdot (\nabla_{\alpha} \phi_j(\alpha) e^{-\frac{\alpha^2}{2}} )=(\Delta_{\alpha} - \alpha \cdot \nabla_{\alpha} )\phi_j(\alpha) e^{-\frac{\alpha^2}{2}}=\lambda_j  \phi_j(\alpha) e^{-\frac{\alpha^2}{2}}.   
\end{equation}

Here $\phi_j(\alpha) e^{-\frac{\alpha^2}{2}}$ forms the orthonormal basis of $L^2_{\alpha}(\mathbb{R})$. We can now see \textbf{the problem} is: unlike the proof for \eqref{maineq1} (Theorem \ref{mainthm1}), for the $\mathcal{OU}$-part, an extra weight $e^{-\frac{\alpha^2}{2}}$ will appear. Therefore, this approach breaks down.

Finally we note that, if \eqref{maineq2} is modified a little bit (adding the weight to other terms) as follows,
\begin{align}\label{newnew}
e^{-\frac{\alpha^2}{2}}(i\partial_t u+\Delta_x) u +\partial_{\alpha} \cdot (\partial_{\alpha} u\, e^{-\frac{\alpha^2}{2}} ) 
 = e^{-\frac{\alpha^2}{2}}F.
\end{align}
We can use the same method as \eqref{maineq1} to establish Strichartz estimate, which is analogous to Theorem \ref{mainthm1}. In fact, \eqref{newnew} is essentially identical to \eqref{maineq1} in view of the transformation between \eqref{maineq2} and \eqref{maineq1}, as stated in Introduction.

These Strichartz estimates reveal the dispersive behavior induced by the $\mathcal{OU}$ drift and allow us to adapt standard contraction arguments to this mixed geometry setting. We now proceed to establish the well-posedness results based on these estimates.

\section{Well-posedness and Small Data Scattering}\label{WP}
Building upon the Strichartz estimates established in Section \ref{Stri}, we discuss the well-posedness and small data scattering in this section. We will first discuss well-posedness theory for \eqref{maineq1} by giving the proof for Theorem \ref{mainthm2}. Before presenting the proof, we remark on the \textbf{necessity} of the weight $w(\alpha)$ in \eqref{maineq1}. We explain why well-posedness theory cannot be handled in the associated energy space $\mathcal{H}^1_{\alpha}$ (corresponding to the conservation law) \textbf{without} the extra weight assumption from the aspect of estimating the nonlinearity. 

In view of Proposition \ref{prop: Conserv} and Theorem \ref{mainthm1}, it is natural to consider $\mathcal{H}^1_{\alpha}$-type weighted space. In general, one needs the following estimate to deal with the nonlinearity: there exists $C=C(p)>0$,
\begin{equation}
    \|w(\alpha)|u|^pu\|_{\dot{\mathcal{H}}_{\alpha}^1} \leq C \|u\|^{p+1}_{\dot{\mathcal{H}}_{\alpha}^1}.
\end{equation}
However, without the weight (letting $w(\alpha)\equiv 1$), this estimate is not expected to hold for a general function $u$. Consider $p=2$ for convenience. For example, taking $u \sim e^{\frac{\alpha^2}{8}}$, one can check that the right hand side is finite while the left hand side blows up.\footnote{Another explanation for the weight is due to the transformation between \eqref{maineq2} and \eqref{maineq1}, as stated in Introduction.}

With the same reason, to study \eqref{maineq2}, one needs to consider the usual $H^1_{\alpha}$-space instead of $\mathcal{H}^1_{\alpha}$-space in view of the nonlinearity (there is no extra weight).

\subsection{Basic settings: Conservation Laws, Strichartz estimates, nonlinear estimates}\label{Basic}
Conservation laws and Strichartz estimates are fundamental for further studying \eqref{maineq1}. They are included in Section \ref{Cons} and Section \ref{Stri}. They are elementary for obtaining the well-posedness theory.

Based on the Strichartz estimates in Section \ref{Stri}, the proof of the well-posedness theory for \eqref{maineq1} is expected to follow from the waveguide case \cite{R2T,TV2} with suitable modifications once we have good \textit{nonlinear estimates}. 

The main difference/challenge is that we will use weighted norms regarding the $\mathcal{OU}$-direction for our case, which is according to the Strichartz estimate (see Section \ref{Stri}). Thus some lemmas for the weighted case are needed.

For example, unlike \cite{TV2}, our contraction mapping requires Gaussian-weighted norms due to the \(\mathcal{OU}\) drift, necessitating Lemma \ref{delta2} for weighted Sobolev embeddings as follows,
\begin{lemma}[Gaussian-weighted Sobolev inequality]\label{delta2}
There exists an universal constant $C>0$ such that,
\begin{equation} 
\|u\|_{\mathcal{L}_{\alpha}^{\infty}(\mathbb{R})} \leq C \|u\|_{\mathcal{H}_{\alpha}^1(\mathbb{R})}.
\end{equation}
\end{lemma}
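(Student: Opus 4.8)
The plan is to reduce the weighted inequality to the classical one-dimensional Sobolev embedding $H^1(\R)\hookrightarrow L^\infty(\R)$ by absorbing the Gaussian weight into the unknown. First I would set
\[
v(\alpha):=u(\alpha)\,e^{-\frac{\alpha^2}{4}},
\]
so that $\|u\|_{\mathcal{L}^2_{\alpha}}=\|v\|_{L^2(\R)}$ and, with the weight $e^{-\frac{\alpha^2}{4}}$ built into the definition of the $\mathcal{L}^\infty_{\alpha}$ norm, $\|u\|_{\mathcal{L}^\infty_{\alpha}}=\|v\|_{L^\infty(\R)}$. Thus it suffices to establish $\|v\|_{L^\infty(\R)}\lesssim \|u\|_{\mathcal{H}^1_{\alpha}(\R)}$.

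By the standard embedding, $\|v\|_{L^\infty}\lesssim \|v\|_{H^1}=\big(\|v\|_{L^2}^2+\|\partial_\alpha v\|_{L^2}^2\big)^{1/2}$. The term $\|v\|_{L^2}=\|u\|_{\mathcal{L}^2_{\alpha}}$ is already controlled by $\|u\|_{\mathcal{H}^1_{\alpha}}$. For the derivative, the product rule gives
\[
\partial_\alpha v=\Big(\partial_\alpha u-\tfrac{\alpha}{2}\,u\Big)e^{-\frac{\alpha^2}{4}},
\]
whence $\|\partial_\alpha v\|_{L^2}\leq \|\partial_\alpha u\|_{\mathcal{L}^2_{\alpha}}+\tfrac12\|\alpha u\|_{\mathcal{L}^2_{\alpha}}$. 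The first summand is $\leq \|u\|_{\mathcal{H}^1_{\alpha}}$ by definition; the genuine difficulty is the first-moment term $\|\alpha u\|_{\mathcal{L}^2_{\alpha}}^2=\int_\R \alpha^2|u|^2 e^{-\frac{\alpha^2}{2}}\,d\alpha$, which is not part of the $\mathcal{H}^1_{\alpha}$ norm.

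This weighted first-moment bound is the main obstacle, and it is exactly where the Gaussian structure is exploited. The plan is to integrate by parts using the identity $\partial_\alpha e^{-\frac{\alpha^2}{2}}=-\alpha\,e^{-\frac{\alpha^2}{2}}$:
\[
\int_\R \alpha^2|u|^2 e^{-\frac{\alpha^2}{2}}\,d\alpha=-\int_\R \alpha|u|^2\,\partial_\alpha\big(e^{-\frac{\alpha^2}{2}}\big)\,d\alpha=\int_\R \partial_\alpha\big(\alpha|u|^2\big)\,e^{-\frac{\alpha^2}{2}}\,d\alpha,
\]
which produces $\int_\R|u|^2 e^{-\frac{\alpha^2}{2}}$ together with a cross term $\int_\R 2\alpha\,\re(\bar u\,\partial_\alpha u)\,e^{-\frac{\alpha^2}{2}}$. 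Bounding the cross term by Cauchy--Schwarz and Young's inequality lets me absorb a fraction of $\int_\R \alpha^2|u|^2 e^{-\frac{\alpha^2}{2}}$ back into the left-hand side, yielding
\[
\int_\R \alpha^2|u|^2 e^{-\frac{\alpha^2}{2}}\,d\alpha\lesssim \|u\|_{\mathcal{H}^1_{\alpha}}^2 .
\]
Combining this with the preceding estimates gives $\|\partial_\alpha v\|_{L^2}\lesssim \|u\|_{\mathcal{H}^1_{\alpha}}$ and hence the claimed inequality. The only remaining point is to justify the integration by parts and the vanishing of the boundary contribution $\alpha|u|^2 e^{-\frac{\alpha^2}{2}}$ at $\pm\infty$; this can be carried out first for Schwartz functions and then extended to all of $\mathcal{H}^1_{\alpha}$ by density, which is routine.
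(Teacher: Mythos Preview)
Your proof is correct and in fact establishes a slightly stronger inequality than the paper's. You interpret $\mathcal{L}^\infty_\alpha$ with the weight $e^{-\alpha^2/4}$, whereas the paper (implicitly, inside its proof) takes the heavier weight $e^{-\alpha^2/2}$, i.e.\ $\|u\|_{\mathcal{L}^\infty_\alpha}=\|u\,e^{-\alpha^2/2}\|_{L^\infty_\alpha}$; since $e^{-\alpha^2/4}\geq e^{-\alpha^2/2}$, your estimate dominates the paper's and hence implies it. The paper's argument sets $f=u\,e^{-\alpha^2/2}$, applies the 1D Sobolev embedding, and uses the product rule $\partial_\alpha f=(\partial_\alpha u-\alpha u)e^{-\alpha^2/2}$; the point is that $\|\alpha u\,e^{-\alpha^2/2}\|_{L^2}^2=\int \alpha^2 e^{-\alpha^2}|u|^2\,d\alpha\lesssim \int e^{-\alpha^2/2}|u|^2\,d\alpha=\|u\|_{\mathcal{L}^2_\alpha}^2$ because $\alpha^2 e^{-\alpha^2/2}$ is bounded, so no further work is needed. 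With your lighter weight $e^{-\alpha^2/4}$ this pointwise trick is unavailable, which is precisely why you must invoke the Gaussian integration-by-parts (a Poincar\'e-type bound) to control $\|\alpha u\|_{\mathcal{L}^2_\alpha}$ by $\|u\|_{\mathcal{H}^1_\alpha}$. In short: the paper's route is shorter thanks to its heavier $\mathcal{L}^\infty_\alpha$ weight, while your argument is more robust and delivers a sharper embedding.
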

\begin{proof}[Proof of Lemma \ref{delta2}]
 We note that, according to the definition, the square of the right-hand side is:
 \begin{equation}\label{estSob}
     \int_{ \R} \abs{\partial_{\alpha} u (\alpha)}^2  e^{-\frac{\alpha^2}{2}} \,  d\alpha + \int_{ \R} \abs{ u(\alpha)}^2  e^{-\frac{\alpha^2}{2}} \,  d\alpha , 
 \end{equation}
 and the left-hand side is:
\begin{equation}
    \|u(\alpha) e^{-\frac{\alpha^2}{2}}\|_{L^{\infty}_{\alpha}}.
\end{equation}
Applying the Sobolev in 1D and Product rule, we have
\begin{equation}
 \|u e^{-\frac{\alpha^2}{2}}\|_{L^{\infty}_{\alpha}} \lesssim \|\partial_{\alpha} (u e^{-\frac{\alpha^2}{2}}))\|_{L^2_{\alpha}}\lesssim \|  (-\alpha)e^{-\frac{\alpha^2}{2}}u)\|_{L^2_{\alpha}}+\|\partial_{\alpha}u (e^{-\frac{\alpha^2}{2}}))\|_{L^2_{\alpha}},   
\end{equation}
which is bounded by the right-hand side of \eqref{estSob}.

This completes the proof of Lemma \ref{delta2}.
\end{proof}
To handle the nonlinearity, we need nonlinear estimates with respect to weighted norms. We consider the general case as follows,
\begin{lemma}[Nonlinear estimate for the weighted case]\label{nonlinear2}
For every integer $p>0$ there exists $C = C(p)>0$ such that
\begin{equation}
    \|e^{-\frac{p \alpha^2}{2}}|u|^pu\|_{\mathcal{H}_{\alpha}^1} \leq C \|u\|^{p+1}_{\mathcal{H}_{\alpha}^1}.
\end{equation}
\end{lemma}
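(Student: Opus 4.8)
The plan is to expand the weighted $\mathcal{H}^1_\alpha$-norm of $g:=e^{-\frac{p\alpha^2}{2}}\abs{u}^pu$ into its zeroth-order ($\mathcal{L}^2_\alpha$) part and its first-order (derivative) part, and to exploit the fact that the Gaussian weight $w(\alpha)^2=e^{-p\alpha^2}$ is tuned precisely to absorb the exponential growth produced when the pointwise form of the embedding Lemma \ref{delta2} is used to control the $2p$-th power of $u$. Throughout I would first take $u$ smooth and rapidly decaying so that the integrations by parts below are legitimate, and then recover the general statement by density.

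For the zeroth-order term I would write
\begin{equation}
\int_{\R}\abs{g}^2 e^{-\frac{\alpha^2}{2}}\,d\alpha=\int_{\R}e^{-p\alpha^2}\abs{u}^{2p}\cdot\abs{u}^2 e^{-\frac{\alpha^2}{2}}\,d\alpha.
\end{equation}
The embedding $\norm{u}_{\mathcal{L}^\infty_\alpha}\lesssim\norm{u}_{\mathcal{H}^1_\alpha}$ of Lemma \ref{delta2} reads pointwise as $\abs{u(\alpha)}e^{-\frac{\alpha^2}{2}}\le C\norm{u}_{\mathcal{H}^1_\alpha}$, hence $\abs{u(\alpha)}^{2p}\le C^{2p}\norm{u}_{\mathcal{H}^1_\alpha}^{2p}e^{p\alpha^2}$. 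Substituting this bound, the factor $e^{p\alpha^2}$ cancels $e^{-p\alpha^2}$ exactly and leaves $C^{2p}\norm{u}_{\mathcal{H}^1_\alpha}^{2p}\int_{\R}\abs{u}^2e^{-\frac{\alpha^2}{2}}\,d\alpha\le C^{2p}\norm{u}_{\mathcal{H}^1_\alpha}^{2p+2}$. This cancellation is the whole point of the weight and accounts for its necessity.

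For the first-order term I would apply the product rule to $\partial_\alpha g$, producing two contributions: one from differentiating the weight, $-p\alpha\,e^{-\frac{p\alpha^2}{2}}\abs{u}^pu$, and one from differentiating the nonlinearity, $e^{-\frac{p\alpha^2}{2}}\partial_\alpha(\abs{u}^pu)$, where $\abs{\partial_\alpha(\abs{u}^pu)}\lesssim\abs{u}^p\abs{\partial_\alpha u}$. The nonlinearity contribution is handled exactly as above: after using the embedding on $\abs{u}^{2p}$ the weight cancels and one is left with $\int_{\R}\abs{\partial_\alpha u}^2 e^{-\frac{\alpha^2}{2}}\,d\alpha\le\norm{u}_{\mathcal{H}^1_\alpha}^2$. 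The weight-derivative contribution, after the same cancellation, reduces to controlling $\int_{\R}\alpha^2\abs{u}^2 e^{-\frac{\alpha^2}{2}}\,d\alpha$.

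The main (and essentially only new) obstacle is therefore the Gaussian-weighted Hardy/Poincar\'e-type inequality $\int_{\R}\alpha^2\abs{u}^2 e^{-\frac{\alpha^2}{2}}\,d\alpha\lesssim\norm{u}_{\mathcal{H}^1_\alpha}^2$. I would prove it by integration by parts, using $\alpha e^{-\frac{\alpha^2}{2}}=-\partial_\alpha(e^{-\frac{\alpha^2}{2}})$ to transfer one power of $\alpha$ onto $\partial_\alpha(\abs{u}^2)=2\re(\overline{u}\,\partial_\alpha u)$, the boundary terms vanishing by decay. This gives
\begin{equation}
\int_{\R}\alpha^2\abs{u}^2 e^{-\frac{\alpha^2}{2}}\,d\alpha=\int_{\R}\abs{u}^2 e^{-\frac{\alpha^2}{2}}\,d\alpha+\int_{\R}\alpha\,\partial_\alpha(\abs{u}^2)\,e^{-\frac{\alpha^2}{2}}\,d\alpha,
\end{equation}
and estimating the last integral by Cauchy--Schwarz, then absorbing the resulting factor $\big(\int_{\R}\alpha^2\abs{u}^2 e^{-\frac{\alpha^2}{2}}\,d\alpha\big)^{1/2}$ via Young's inequality, closes the bound with a constant depending only on $p$. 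Collecting the three estimates yields $\norm{g}_{\mathcal{H}^1_\alpha}\le C(p)\norm{u}_{\mathcal{H}^1_\alpha}^{p+1}$, as claimed.
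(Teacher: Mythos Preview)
Your proof is correct. The overall structure---splitting into the $\mathcal{L}^2_\alpha$ piece and the $\dot{\mathcal{H}}^1_\alpha$ piece, and within the latter separating the derivative-of-nonlinearity and derivative-of-weight contributions---matches the paper exactly, as does the use of the pointwise embedding $\abs{u(\alpha)}e^{-\alpha^2/2}\lesssim\norm{u}_{\mathcal{H}^1_\alpha}$ from Lemma~\ref{delta2} to cancel the Gaussian weight $e^{-p\alpha^2}$ against $\abs{u}^{2p}$.

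The only genuine divergence is in how the $\alpha^2$ factor coming from the weight derivative is handled. The paper places the whole expression $\alpha^2 e^{-p\alpha^2}\abs{u}^{2p}$ in $L^\infty_\alpha$ and bounds it by $\norm{u}_{\mathcal{H}^1_\alpha}^2\norm{u}_{\mathcal{L}^\infty_\alpha}^{2p-2}$; this step implicitly relies on an upgrade of Lemma~\ref{delta2} to a bound of the form $\norm{\alpha\, e^{-\alpha^2/2}u}_{L^\infty_\alpha}\lesssim\norm{u}_{\mathcal{H}^1_\alpha}$ (which follows by the same one-dimensional Sobolev argument, since the spare $e^{-\alpha^2/2}$ in $e^{-\alpha^2}$ absorbs any polynomial). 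You instead peel off all $2p$ copies of $u$ at the $L^\infty$ level, which leaves the purely $L^2$-type quantity $\int_{\R}\alpha^2\abs{u}^2 e^{-\alpha^2/2}\,d\alpha$, and you bound this by an explicit Gaussian Hardy--Poincar\'e inequality via integration by parts and absorption. Your route has the merit of isolating and proving the needed auxiliary estimate cleanly; the paper's route is terser but asks the reader to supply the enhanced pointwise bound.
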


\begin{proof}[Proof of Lemma \ref{nonlinear2}]
The $\mathcal{L}^2$ case is easier in view of the following estimate and Lemma \ref{delta2},
\begin{equation}
\|e^{-\frac{p \alpha^2}{2}}|u|^pu\|_{\mathcal{L}^2_{\alpha}} \lesssim  \|u\|_{\mathcal{L}^2_{\alpha}} \|u\|^p_{\mathcal{L}^{\infty}_{\alpha}}. 
\end{equation}
We now focus on the $\dot{\mathcal{H}}^1$ case.

Using the definitions of the weighted norms, Product rule, Lemma \ref{delta2}, the Sobolev and the H\"older, the square of the left hand side is bounded by: 
\begin{align}
&  \int_{ \R} \abs{\partial_{\alpha} (e^{-\frac{p \alpha^2}{2}}|u|^pu)}^2  e^{-\frac{\alpha^2}{2}} \,  d\alpha \\
& \lesssim  \int_{ \R} \abs{(p+1)(\partial_{\alpha} u)(e^{-\frac{p \alpha^2}{2}}|u|^p)}^2  e^{-\frac{\alpha^2}{2}} \,  d\alpha  \\
&+ \int_{ \R} \abs{(-p\alpha) (e^{-\frac{p \alpha^2}{2}}|u|^pu)}^2  e^{-\frac{\alpha^2}{2}} \,  d\alpha  \\
& \lesssim \|u\|^2_{\mathcal{H}_{\alpha}^1}\|u\|^{2p}_{\mathcal{L}_{\alpha}^{\infty}}+\|u\|^2_{\mathcal{H}_{\alpha}^1}\|\alpha^2 e^{-p\alpha^2} |u|^{2p}\|_{L^{\infty}_{\alpha}}\\
& \lesssim  \|u\|^{2p+2}_{\mathcal{H}_{\alpha}^1}+\|u\|^2_{\mathcal{H}_{\alpha}^1}\|u\|^2_{\mathcal{H}_{\alpha}^1} \|u\|^{2p-2}_{\mathcal{L}_{\alpha}^{\infty}}\\
& \lesssim  \|u\|^{2p+2}_{\mathcal{H}_{\alpha}^1}.
\end{align} 
The proof is finished.
\end{proof}
The above lemma plays a similar role as Lemma 4.1 in \cite{TV2}. It indicates the distribution of the derivative on the nonlinearity (with Gaussian weights)\footnote{We note that, for the above two lemmas, it is possible to consider more general cases (fractional derivatives or non-integer $p$), which would be more technical. We refer to \cite{V} for fractional calculus. Since we concern the cubic case and the quintic case in energy space, the nonlinear estimates would be much easier.}. 

For \eqref{maineq2}, according to Strichartz estimates in Section \ref{Stri} and its nonlinear structure, it suffices to use the following usual nonlinear estimate since no-weighted norms are used.
\begin{lemma}[Lemma 4.1 in \cite{TV2}]\label{delta}
For every $0<s<1$, $p>0$ there exists $C = C(p, s)>0$ such that
\begin{equation}
    \||u|^pu\|_{\dot{H}_{\alpha}^s} \leq C \|u\|_{\dot{H}_{\alpha}^s}\|u\|^p_{L_{\alpha}^{\infty}}.
\end{equation}
\end{lemma}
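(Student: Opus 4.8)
The plan is to prove Lemma \ref{delta} by reducing the fractional Sobolev norm to its Gagliardo (double-integral) form, for which the nonlinearity $F(z) = \abs{z}^p z$ can be controlled by an elementary pointwise estimate. Since $0 < s < 1$, I would first recall the equivalence
\[
\norm{f}_{\dot{H}^s_\alpha(\R)}^2 \simeq \int_\R \int_\R \frac{\abs{f(\alpha) - f(\beta)}^2}{\abs{\alpha-\beta}^{1+2s}} \, d\alpha\, d\beta,
\]
with implicit constant depending only on $s$. This characterization is exactly what makes the range $0 < s < 1$ tractable by elementary means, bypassing any Littlewood--Paley or paraproduct machinery.

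The key step is a pointwise Lipschitz bound for $F$. Viewing $F(z) = \abs{z}^p z$ as a map $\R^2 \to \R^2$, one checks that it is $C^1$ for every $p > 0$ and satisfies $\abs{DF(z)} \lesssim_p \abs{z}^p$; the factor $\abs{z}^p$ vanishes at the origin, so no singularity appears even when $0 < p < 1$. Because the closed disk of radius $\norm{u}_{L^\infty_\alpha}$ is convex and contains both $u(\alpha)$ and $u(\beta)$, the mean value inequality along the connecting segment gives
\[
\abs{F(u(\alpha)) - F(u(\beta))} \le \Big(\sup_{\abs{z} \le \norm{u}_{L^\infty_\alpha}} \abs{DF(z)}\Big)\, \abs{u(\alpha) - u(\beta)} \lesssim_p \norm{u}_{L^\infty_\alpha}^p\, \abs{u(\alpha) - u(\beta)}.
\]

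I would then insert this bound into the Gagliardo seminorm of $F(u)$, pull the factor $\norm{u}_{L^\infty_\alpha}^{2p}$ outside the double integral, and recognize the remaining integral as $\norm{u}_{\dot{H}^s_\alpha}^2$ via the same equivalence. Taking square roots yields the claimed estimate $\norm{\abs{u}^p u}_{\dot{H}^s_\alpha} \le C \norm{u}_{\dot{H}^s_\alpha} \norm{u}_{L^\infty_\alpha}^p$. The only genuinely delicate point, and thus the main obstacle, is the uniform derivative bound $\abs{DF(z)} \lesssim_p \abs{z}^p$ near $z = 0$ in the regime $0 < p < 1$, where $F$ is merely $C^1$ rather than smooth; one must verify that $DF$ extends continuously to the origin with the stated bound, after which the convexity/mean-value argument and the norm equivalence are routine. (Alternatively, the estimate is a special case of the Christ--Weinstein fractional chain rule, but the Gagliardo argument is self-contained in this range.)
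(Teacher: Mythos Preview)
Your argument is correct. The paper does not prove this lemma at all: it is quoted verbatim as Lemma~4.1 from \cite{TV2} and used as a black box. So there is no ``paper's proof'' to compare against, and your Gagliardo--seminorm approach supplies a clean, self-contained justification. The equivalence
\[
\norm{f}_{\dot H^s_\alpha(\R)}^2 \simeq \iint_{\R\times\R} \frac{|f(\alpha)-f(\beta)|^2}{|\alpha-\beta|^{1+2s}}\,d\alpha\,d\beta,\qquad 0<s<1,
\]
together with the pointwise bound $|F(u(\alpha))-F(u(\beta))|\lesssim_p \norm{u}_{L^\infty_\alpha}^p|u(\alpha)-u(\beta)|$ coming from $|DF(z)|\lesssim_p|z|^p$, gives the estimate immediately. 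Your caution about $0<p<1$ is appropriate but resolved exactly as you say: each first-order partial of $F(z)=|z|^p z$ is bounded by a constant times $|z|^p$ away from the origin and tends to zero as $z\to 0$, matching the difference-quotient value there, so $F\in C^1(\R^2;\R^2)$ and the mean-value inequality on the segment applies without restriction.
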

\begin{remark}\label{nonlinear}
Applying the Sobolev in one dimensional space, one can directly obtain, for $s> \frac{1}{2}$
\begin{equation}
    \||u|^pu\|_{\dot{H}_{\alpha}^s} \leq C \|u\|^{p+1}_{\dot{H}_{\alpha}^s},
\end{equation}
which is helpful to analyze the nonlinearity.\footnote{We note that if a higher dimensional case is considered, the $H^{1+}$-regularity is required due to the Sobolev.} \end{remark}
\subsection{Well-posedness Theory}
With the key estimates established, we are ready to turn to the well-posedness.
\begin{proof}[Proof of Theorem \ref{mainthm2}]

It suffices to show the local well-posedness. Global well-posedness follows from the conservation laws immediately. Once we have the Strichartz estimates (Theorem \ref{mainthm1}) and the nonlinear estimate (Lemma \ref{nonlinear2}) established, the proof is standard so we give a sketch as follows.   

Define a sequence $u_n$ as follows
\begin{equation}
    u_0=H(t) u_0,
\end{equation}
\begin{equation}
    u_{n+1}=H(t)u_n+i\int_0^t H(t-s)|u_n(s)|^2u_n(s)ds.
\end{equation}
Consider finite interval $I$. Then we have the following nonlinear estimates,
\begin{equation}
   \|u_{n+1}\|_{L_{t,x}^4(I)\mathcal{H}^1_{\alpha}}\leq C(I)\|u_0\|_{L_x^2 \mathcal{H}^1_{\alpha}}+C(I) \|u_{n}\|^3_{L_{t,x}^{4}(I)\mathcal{H}^1_{\alpha}}
\end{equation}
and
\begin{equation}
   \|u_{n+1}-u_n\|_{L_{t,x}^4(I)\mathcal{H}^1_{\alpha}}\leq C(I) \|u_{n+1}-u_n\|_{L_{t,x}^4(I)\mathcal{H}^1_{\alpha}}\times (\|u_{n+1}\|^2_{L_{t,x}^4(I)\mathcal{H}^1_{\alpha}}+\|u_{n}\|^2_{L_{t,x}^4(I)\mathcal{H}^1_{\alpha}}). 
\end{equation}
 Therefore $u_n$ converges to a local solution in the time interval $[-1,1]$ if one takes $I$ small enough.
\end{proof}
We also have,
\begin{proposition}[Small data scattering]\label{prop: cubic2}
There exists $\delta > 0$ such that if $u_0 \in \mathcal{H}^1_{x,\alpha}$ and
\begin{align}
\|u_0\|_{L^2_x \mathcal{H}^1_{\alpha}} \leq \delta,
\end{align}
then \eqref{maineq3dcubic} has a unique global solution $u(t,x,\alpha) \in C^0_t L^2_x \mathcal{H}^1_{\alpha} \cap L^4_t L^4_x \mathcal{H}^1_{\alpha}$, and there exist $u_\pm \in L^2_x \mathcal{H}^1_{\alpha}$ such that
\begin{align}
\|u(t,x,\alpha) - e^{it\Delta_{x,\alpha}} u_\pm(x,\alpha)\|_{L^2_x \mathcal{H}^1_{\alpha}} \to 0, \quad \text{as } t \to \pm \infty.
\end{align}
\end{proposition}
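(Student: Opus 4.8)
The plan is to run a standard small-data contraction in a global Strichartz space and then construct the scattering states by the usual limiting argument, with the weighted $\mathcal{H}^1_\alpha$ structure carried along at every step. Write $H(t)$ for the linear propagator generated by $\Delta_x + (\partial_{\alpha\alpha}-\alpha\partial_\alpha)$, so that solutions of \eqref{maineq3dcubic} satisfy the Duhamel formula
\begin{equation}
u(t) = H(t) u_0 - i \int_0^t H(t-s)\,\big[e^{-\alpha^2}|u|^2 u\big](s)\, ds =: \Phi(u)(t).
\end{equation}
The natural working space is
\begin{equation}
X := C^0_t L^2_x \mathcal{H}^1_\alpha(\mathbb{R}) \cap L^4_t L^4_x \mathcal{H}^1_\alpha(\mathbb{R}),
\end{equation}
with $\|u\|_X := \|u\|_{L^\infty_t L^2_x \mathcal{H}^1_\alpha} + \|u\|_{L^4_t L^4_x \mathcal{H}^1_\alpha}$; here $(4,4)$ is the admissible pair in $d=2$ since $\tfrac{2}{4}+\tfrac{2}{4}=1$, and it is precisely the pair already used in the proof of Theorem \ref{mainthm2}.

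First I would estimate $\Phi$ using the Strichartz inequality \eqref{rStri2} with $k=0$, applied to the pair $(4,4)$ and its dual $(4/3,4/3)$, to obtain
\begin{equation}
\|\Phi(u)\|_X \lesssim \|u_0\|_{L^2_x \mathcal{H}^1_\alpha} + \big\| e^{-\alpha^2}|u|^2 u \big\|_{L^{4/3}_t L^{4/3}_x \mathcal{H}^1_\alpha}.
\end{equation}
The nonlinearity is controlled by invoking Lemma \ref{nonlinear2} with $p=2$ pointwise in $(t,x)$, which gives $\|e^{-\alpha^2}|u|^2 u\|_{\mathcal{H}^1_\alpha} \lesssim \|u\|^3_{\mathcal{H}^1_\alpha}$, followed by Hölder in $(t,x)$:
\begin{equation}
\big\| e^{-\alpha^2}|u|^2 u \big\|_{L^{4/3}_t L^{4/3}_x \mathcal{H}^1_\alpha} \lesssim \big\| \|u\|_{\mathcal{H}^1_\alpha} \big\|^3_{L^4_{t,x}} = \|u\|^3_{L^4_t L^4_x \mathcal{H}^1_\alpha} \leq \|u\|_X^3.
\end{equation}
This yields $\|\Phi(u)\|_X \lesssim \|u_0\|_{L^2_x\mathcal{H}^1_\alpha} + \|u\|_X^3$, and a parallel difference estimate $\|\Phi(u)-\Phi(v)\|_X \lesssim (\|u\|_X^2 + \|v\|_X^2)\|u-v\|_X$ using the pointwise bound $\big||u|^2u - |v|^2v\big| \lesssim (|u|^2+|v|^2)|u-v|$. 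For $\|u_0\|_{L^2_x\mathcal{H}^1_\alpha}\leq\delta$ with $\delta$ small, $\Phi$ is a contraction on a small ball of $X$, giving a unique global solution $u\in X$; global existence is automatic since the argument is run on all of $\mathbb{R}$.

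For scattering I would set the candidate asymptotic state
\begin{equation}
u_+ := u_0 - i \int_0^\infty H(-s)\,\big[e^{-\alpha^2}|u|^2 u\big](s)\, ds,
\end{equation}
whose convergence in $L^2_x\mathcal{H}^1_\alpha$ follows from the same dual Strichartz bound together with $\|u\|_X<\infty$. Applying the inhomogeneous Strichartz estimate on $[t,\infty)$ then gives
\begin{equation}
\|u(t) - H(t) u_+\|_{L^2_x \mathcal{H}^1_\alpha} \lesssim \|u\|^3_{L^4_t([t,\infty)) L^4_x \mathcal{H}^1_\alpha} \longrightarrow 0 \quad \text{as } t\to+\infty,
\end{equation}
since $u\in L^4_t L^4_x\mathcal{H}^1_\alpha(\mathbb{R})$; the construction of $u_-$ and the limit $t\to-\infty$ are identical. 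I expect the only genuine difficulty, beyond routine Euclidean small-data theory, to be at the level of the weighted norms: one must check that the Gaussian weight $e^{-\alpha^2}$ in the nonlinearity is exactly the weight absorbed by Lemma \ref{nonlinear2}, so that $\Phi$ maps $X$ into itself in $\mathcal{H}^1_\alpha$, and that the full $\alpha$-regularity is supplied by the Strichartz estimate \eqref{rStri2} directly, rather than by commuting $\partial_\alpha$ through $H(t)$, which fails since $\partial_\alpha$ does not commute with the $\mathcal{OU}$ flow. Once these weighted estimates are secured, the contraction and the limiting argument are entirely standard.
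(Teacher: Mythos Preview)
Your proposal is correct and follows exactly the standard small-data contraction-plus-Duhamel-tail argument that the paper has in mind; indeed the paper's own proof consists of a single sentence declaring the result standard (given the Strichartz estimate Theorem \ref{mainthm1} and the nonlinear estimate Lemma \ref{nonlinear2}) and referring to the waveguide analogue in \cite{R2T}. Your identification of the admissible pair $(4,4)$ in $d=2$, the use of \eqref{rStri2} at the $\mathcal{H}^1_\alpha$ level, and the observation that the Gaussian weight $e^{-\alpha^2}$ is precisely what Lemma \ref{nonlinear2} with $p=2$ absorbs are all on point.
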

\begin{proof}
The proof is also standard once we have all necessary estimates. See Theorem 2.5 in \cite{R2T} and the references therein.    
\end{proof}
\begin{remark}
Following the idea of \cite{fan2024dispersive,fan2024decaying}, nonlinear dispersive estimates are expected to be further obtained.    
\end{remark}
Now we turn to the 2D quintic model as follows. 
\begin{equation}\label{maineq2dquintic}
    (i\partial_t+\partial_{xx})u+(\partial_{\alpha \alpha}-\alpha \cdot \partial_{\alpha})u=e^{-2\alpha^2}|u|^4 u,\quad u(0,x,\alpha)=u_0\in \mathcal{H}_{x,\alpha}^1(\mathbb{R}_x \times \mathbb{R}_{\alpha}).
\end{equation}  
We then formulate analogous results.
\begin{proposition}\label{prop: quintic1}
For any $E>0$, if $\|u_0\|_{\mathcal{H}^1_{x,\alpha}}\leq E$, there exists a unique global solution $u$ of \eqref{maineq2dquintic} satisfying
\begin{equation}
 \|u(t)\|_{L_t^{\infty}\mathcal{H}^1_{x,\alpha}} \lesssim \|u_0\|_{\mathcal{H}^1_{x,\alpha}}.  
\end{equation}
\end{proposition}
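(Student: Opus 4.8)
The plan is to establish this exactly as in the proof of Theorem~\ref{mainthm2}, changing only the Euclidean dimension from $d=2$ to $d=1$ and the power from $p=2$ to $p=4$. As there, it suffices to prove a local well-posedness statement whose time of existence depends only on $\|u_0\|_{\mathcal{H}^1_{x,\alpha}}$; the global bound then follows from the conservation laws of Proposition~\ref{prop: Conserv}. Concretely, since \eqref{maineq2dquintic} is defocusing, the mass $M(u)$ together with the energy $E(u)$ is coercive and controls $\|u(t)\|_{\mathcal{H}^1_{x,\alpha}}$ uniformly in $t$; inserting this a priori bound into the local theory produces a global solution obeying $\|u\|_{L^\infty_t\mathcal{H}^1_{x,\alpha}}\lesssim\|u_0\|_{\mathcal{H}^1_{x,\alpha}}$.

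For the local theory I would run the Picard iteration
\begin{equation}
u_{n+1}=H(t)u_0+i\int_0^t H(t-s)\,e^{-2\alpha^2}|u_n(s)|^4u_n(s)\,ds,\qquad H(t)=e^{it(\Delta_x+\mathcal{OU})},
\end{equation}
in a space of the form $L^q_tL^r_x\mathcal{H}^1_\alpha(I)$ on a short interval $I$, with $(q,r)$ a Schr\"odinger-admissible pair in $\mathbb{R}^1$. The linear term is handled by the Strichartz estimates of Theorem~\ref{mainthm1} with $d=1$, using \eqref{rStri1} and \eqref{rStri2} to transport the full $\mathcal{H}^1_\alpha$-norm through the flow; the crucial point is that the one-$\alpha$-derivative estimate \eqref{rStri2} already encodes the commutator identity $[\partial_\alpha,\mathcal{OU}]=-\partial_\alpha$ (the ``lucky'' observation behind Theorem~\ref{mainthm1}), which is what makes $\partial_\alpha$ propagate under the non-commuting flow. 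The Duhamel term is controlled by placing the quintic nonlinearity in the dual Strichartz space and applying the weighted nonlinear estimate of Lemma~\ref{nonlinear2} with $p=4$, namely $\|e^{-2\alpha^2}|u|^4u\|_{\mathcal{H}^1_\alpha}\lesssim\|u\|^5_{\mathcal{H}^1_\alpha}$ pointwise in $(t,x)$, combined with the one-dimensional Sobolev embedding $H^1_x\hookrightarrow L^\infty_x$ and the Gaussian-weighted embedding $\mathcal{H}^1_\alpha\hookrightarrow\mathcal{L}^\infty_\alpha$ of Lemma~\ref{delta2} to distribute the five factors of $u$. An identical estimate on $u_{n+1}-u_n$ yields the contraction, and hence uniqueness. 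I note that because $d=1$ the embedding $H^1_x\hookrightarrow L^\infty_x$ is available, so one may alternatively close the argument by a direct energy estimate: $H(t)$ is unitary on $\mathcal{H}^1_{x,\alpha}$ (again by the commutator identity), and the Duhamel term is bounded using $\|e^{-2\alpha^2}|u|^4u\|_{\mathcal{H}^1_{x,\alpha}}\lesssim\|u\|^5_{\mathcal{H}^1_{x,\alpha}}$.

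The step requiring genuine care, and which I regard as the main obstacle, is the extraction of a positive power of $|I|$ for \emph{large} data. The quintic nonlinearity is $L^2$-critical in $\mathbb{R}^1$ (the scaling index is $s_c=\tfrac12-\tfrac24=0$), so the scaling-critical admissible pair $(6,6)$ leaves no time gain and only yields small-data results. To reach an arbitrary threshold $E$ one must exploit that the problem is $H^1$-subcritical ($s_c=0<1$): choosing a strictly subcritical admissible pair (or, in the direct energy formulation, estimating the nonlinearity in $L^1_t\mathcal{H}^1_{x,\alpha}$) and applying H\"older in time produces a factor $|I|^{\theta}$ with $\theta>0$, leading to
\begin{equation}
\|u_{n+1}\|_{L^q_tL^r_x\mathcal{H}^1_\alpha(I)}\le C\|u_0\|_{\mathcal{H}^1_{x,\alpha}}+C\,|I|^{\theta}\,\|u_n\|^5_{L^q_tL^r_x\mathcal{H}^1_\alpha(I)}.
\end{equation}
For $|I|$ small, depending only on $\|u_0\|_{\mathcal{H}^1_{x,\alpha}}\le E$, this is a contraction on a ball, exactly as in Theorem~\ref{mainthm2}. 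Since the existence time depends only on the $\mathcal{H}^1_{x,\alpha}$-norm of the data, the conserved-quantity bound iterates to a global solution, and the remaining verifications are routine and parallel to the cubic case.
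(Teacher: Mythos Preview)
Your proposal is correct and follows essentially the same approach as the paper, which simply records that the proof ``follows similarly to the cubic case'' (Theorem~\ref{mainthm2}) and omits the details. If anything, your write-up is more careful than the paper's sketch: you correctly flag that the quintic nonlinearity is $L^2$-critical in $d=1$ (just as the cubic one is in $d=2$) and explain explicitly how the $H^1$-subcriticality yields the factor $|I|^\theta$ needed for large data, a point the paper's proof of Theorem~\ref{mainthm2} leaves implicit in the notation ``$C(I)$.''
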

\begin{proposition}\label{prop: quintic2}
There exists $\delta > 0$ such that if $u_0 \in \mathcal{H}^1_{x,\alpha}$ and
\begin{align}
\|u_0\|_{L^2_x \mathcal{H}^1_{\alpha}} \leq \delta,
\end{align}
then \eqref{maineq2dquintic} has a unique global solution $u(t,x,\alpha) \in C^0_t L^2_x \mathcal{H}^1_{\alpha} \cap L^6_t L^6_x \mathcal{H}^1_{\alpha}$, and there exist $u_\pm \in L^2_x \mathcal{H}^1_{\alpha}$ such that
\begin{align}
\|u(t,x,\alpha) - e^{it\Delta_{x,\alpha}} u_\pm(x,\alpha)\|_{L^2_x \mathcal{H}^1_{\alpha}} \to 0, \quad \text{as } t \to \pm \infty.
\end{align}
\end{proposition}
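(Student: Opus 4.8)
The plan is to run a standard Strichartz-based contraction argument, carried out in the Gaussian-weighted scattering space adapted to the $\mathcal{OU}$ direction, exactly paralleling the proof of Proposition \ref{prop: cubic2} but with exponents dictated by the $L^2$-critical (quintic) scaling in the single Euclidean dimension $x\in\mathbb{R}$. Writing $H(t)=e^{it\Delta_{x,\alpha}}$ for the linear propagator of \eqref{maineq2dquintic} and $w(\alpha)=e^{-2\alpha^2}$, I would study the Duhamel map
\[
\Phi(u)(t) = H(t)u_0 - i\int_0^t H(t-s)\big[w(\alpha)|u|^4u\big](s)\,ds
\]
on a small ball of $X := C^0_t L^2_x \mathcal{H}^1_\alpha \cap L^6_t L^6_x \mathcal{H}^1_\alpha$. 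The structural observation that makes the quintic the scattering-critical power here is that, since $x$ is one-dimensional, the pair $(6,6)$ is Schr\"odinger-admissible ($\tfrac{2}{6}+\tfrac{1}{6}=\tfrac12$) and its conjugate $(6/5,6/5)$ is exactly the exponent produced by $|u|^4u$.

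First I would record the nonlinear spacetime estimate. Applying Lemma \ref{nonlinear2} with $p=4$ fiberwise in $(t,x)$ gives the pointwise-in-$(t,x)$ bound $\|w|u|^4u\|_{\mathcal{H}^1_\alpha} \lesssim \|u\|_{\mathcal{H}^1_\alpha}^5$, and then taking the $L^{6/5}_t L^{6/5}_x$ norm yields
\[
\big\|w(\alpha)|u|^4u\big\|_{L^{6/5}_t L^{6/5}_x \mathcal{H}^1_\alpha} \lesssim \big\|\,\|u\|_{\mathcal{H}^1_\alpha}\,\big\|_{L^6_t L^6_x}^5 = \|u\|_{L^6_t L^6_x \mathcal{H}^1_\alpha}^5 .
\]
Feeding this into the inhomogeneous Strichartz estimate \eqref{rStri2} (with $k=0$, admissible pair $(6,6)$ and dual $(6/5,6/5)$) gives $\|\Phi(u)\|_{X} \lesssim \|u_0\|_{L^2_x \mathcal{H}^1_\alpha} + \|u\|_{L^6_t L^6_x \mathcal{H}^1_\alpha}^5$. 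The matching difference estimate $\|\Phi(u)-\Phi(v)\|_X \lesssim (\|u\|_X^4+\|v\|_X^4)\|u-v\|_X$ follows from the elementary pointwise bound $\abs{\abs{u}^4 u - \abs{v}^4 v} \lesssim (\abs{u}^4+\abs{v}^4)\abs{u-v}$ together with the same weighted $\mathcal{H}^1_\alpha$ product estimates, Lemma \ref{delta2} supplying the embedding $\mathcal{H}^1_\alpha \hookrightarrow \mathcal{L}^\infty_\alpha$ that distributes the $\alpha$-derivative. Choosing $\delta$ small enough, $\Phi$ is a contraction on $\{\|u\|_X \le 2C\delta\}$, producing a unique solution; globality is immediate from the smallness and the global-in-time nature of the Strichartz estimates, so no conservation law is invoked.

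For scattering I would set
\[
u_\pm := u_0 - i\int_0^{\pm\infty} H(-s)\big[w(\alpha)|u|^4u\big](s)\,ds ,
\]
which converge in $L^2_x\mathcal{H}^1_\alpha$ since the integrand lies in $L^{6/5}_t L^{6/5}_x \mathcal{H}^1_\alpha$ on $(0,\pm\infty)$ by the estimate above; then $\|u(t)-H(t)u_\pm\|_{L^2_x\mathcal{H}^1_\alpha}$ equals the tail $\big\|\int_t^{\pm\infty} H(-s)[w|u|^4u]\,ds\big\|_{L^2_x\mathcal{H}^1_\alpha}$, which tends to $0$ as $t\to\pm\infty$ by dominated convergence. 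I expect the only genuinely non-routine ingredient to be the weighted nonlinear estimate itself (Lemma \ref{nonlinear2} at $p=4$): it is precisely the built-in Gaussian weight $w(\alpha)=e^{-2\alpha^2}$ that lets one absorb the derivative falling on the weight and close the estimate in $\mathcal{H}^1_\alpha$, as explained in the discussion preceding Lemma \ref{nonlinear2}. Everything else is the standard mass-critical small-data fixed-point scheme, with the $\alpha$-direction carried inertly as the fiber space $\mathcal{H}^1_\alpha$ and the non-commutativity of $\partial_\alpha$ with the flow already absorbed into Theorem \ref{mainthm1}.
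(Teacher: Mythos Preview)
Your proposal is correct and is precisely the approach the paper intends: the paper omits the proof entirely, stating only that it ``follow[s] similarly to the cubic case,'' which in turn is declared standard given Theorem \ref{mainthm1} and Lemma \ref{nonlinear2}. Your explicit choice of the $d=1$ admissible pair $(6,6)$ with dual $(6/5,6/5)$, the fiberwise application of Lemma \ref{nonlinear2} at $p=4$, and the resulting small-data contraction in $C^0_t L^2_x \mathcal{H}^1_\alpha \cap L^6_t L^6_x \mathcal{H}^1_\alpha$ is exactly the intended adaptation of the cubic argument.
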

The proofs for the above two propositions follow similarly to the cubic case, so we omit them.

Now we turn to \eqref{maineq2}. We present a local well-posedness result as follows. Consider the cubic model as an example.
\begin{equation}\label{maineq3dcubicb}
    (i\partial_t+\Delta_x)u+\partial_{\alpha}\cdot(\partial_{\alpha}u e^{-\frac{\alpha^2}{2}})u=|u|^2 u,\quad u(0,x,\alpha)=u_0.
\end{equation}   
Straightforward calculations are as follows. Duhamel Formula, the Sobolev, conservation law, and nonlinear estimate (Lemma \ref{delta}) are used. We use $e^{itH}$ for the linear propagator in \eqref{maineq3dcubicb}. For $s>1$ ($s$ can be arbitrarily close to $1$), we have
\begin{align*}
\norm{u}_{H_x^sH^1_{\alpha}} & \leq \norm{e^{it H} u_0}_{H_x^sH^1_{\alpha}}  + \norm{\int_0^t e^{i(t-s)H} \abs{u}^2 u (s) \, ds}_{H_x^sH^1_{\alpha}} \\
& \leq \norm{u_0}_{H_x^sH^1_{\alpha}} + \int_0^t \norm{ e^{i(t-s)H} \abs{u}^2 u (s) }_{H_x^sH^1_{\alpha}} \, ds\\
& \leq \norm{u_0}_{H_x^s H^1_{\alpha}} + \int_0^t \norm{ \abs{u}^2 u (s) }_{H_x^s H^1_{\alpha}} \, ds\\
& \leq \norm{u_0}_{H_x^s H^1_{\alpha}} + \int_0^t \norm{u}_{H_x^sH^1_{\alpha}}^3 \, ds \\
& \leq \norm{u_0}_{H_x^s H^1_{\alpha}} + T \norm{u}_{H_x^s H^1_{\alpha}}^3. 
\end{align*} 
By taking $T>0$ small enough, it yields
\begin{align*}
\norm{u}_{H_x^s  H^1_{\alpha}} \leq 2 \norm{u_0}_{H_x^s H^1_{\alpha}},
\end{align*}    
which implies local well-posedness.

\begin{remark}
For \eqref{maineq2}, analogous results for the 2D quintic model and other general models can be obtained in a similar way. Since the initial space is not compatible with the conservation law (Section \ref{Cons}). The global dynamics are not very clear.  See Section \ref{foc} for finite-time blow-up results in the focusing setting.     
\end{remark}

\begin{remark}[\eqref{maineq2} vs. \eqref{maineq1} in LWP theory]
Compared to \eqref{maineq1}, establishing local well-posedness for \eqref{maineq2} requires greater care. In \eqref{maineq1}, the $\mathcal{OU}$ operator admits a clean spectral resolution and ``almost commutes" well with one spatial derivative in $\alpha$ direction. In contrast, the divergence-form $\mathcal{OU}$ operator in \eqref{maineq2} breaks the derivative commutation, and the propagator does not preserve classical Sobolev spaces. To circumvent this, we rely on high regularity estimates.
\end{remark}

\section{Gaussian-weighted Morawetz Estimates}\label{Morawetz}
In this section, we establish the Gaussian-weighted interaction Morawetz estimates for both \eqref{maineq2} and \eqref{maineq1}, which are expected to be crucial ingredients for studying the long time dynamics.

These Gaussian-weighted interaction Morawetz estimates are new and of their own interests. More applications (such as scattering dynamics) are expected to be further investigated in the future. In addition, the Gaussian-weighted interaction Morawetz estimates developed here may also provide useful insights for establishing weighted interaction Morawetz inequalities in other non-Euclidean or non-compact settings.
We refer to \cite{Taobook,Iteam1,colliander2009tensor,planchon2009bilinear} and the references therein for classical (interaction) Morawetz estimates.

\subsection{Gaussian-weighted Morawetz Estimates for \eqref{maineq2}}\label{Morawetz 2}

First, we have
\begin{proposition}[Interaction Morawetz Identity]\label{prop Morawetz}

Let $u(t,x,\alpha)$ satisfies
\begin{align}
i\partial_t u+\Delta_x u + \nabla_{\alpha} \cdot (\nabla_{\alpha} u \,e^{-\frac{\alpha^2}{2}} ) 
 = |u|^p u,
\end{align}
and let $v(t,y,\beta)$ verify
\begin{align}
i\partial_t v +\Delta_y v + \nabla_{\beta} \cdot (\nabla_{\beta} v\, e^{-\frac{\beta^2}{2}} )  & = \abs{v}^p v .
\end{align}
Set a bilinear Morawetz interaction functional of the following form
\begin{align}
I_{\rho} :=  \iint_{(\R^d \times \R) \times (\R^d \times \R)} \rho (x-y) \abs{u}^2 (x , \alpha) \abs{v}^2 (y, \beta)  \, dx d\alpha dy d\beta .
\end{align}
Note here $\rho (x-y)$ depends only on $x$ and $y$, not on $\alpha$ or $\beta$, and we also assume the symmetry on $\rho$, that is, $\nabla_x \rho = - \nabla_y \rho$.

Then we have 
\begin{align}\label{eq Morawetz}
\begin{aligned}
\partial_t^2 I_{\rho} & = 4 \iint H_{\rho} (x-y) (F(u,v) (x, \alpha , y , \beta) , \overline{F} (u,v) (x, \alpha , y , \beta)) \, dxdy  \, d\alpha d \beta \\
& \quad + \int \frac{p}{p+2}  \int  \abs{v}^2 (y, \beta) (\Delta_x \rho) (x-y) \abs{u}^{p+2} (x, \alpha) \, dxdy  \omega(\alpha)  \, d\alpha d \beta \\
& \quad + \int \frac{p}{p+2} \int  \abs{u}^2 (x, \alpha) (\Delta_x \rho) (x-y) \abs{v}^{p+2} (y,\beta) \, dxdy \omega(\beta)  \, d\alpha d \beta, 
\end{aligned}
\end{align}
where
\begin{align}
F(u,v) : = \overline{v} (y,\beta) \nabla_x u(x, \alpha) + u(x, \alpha) \nabla_y \overline{v}(y,\beta) 
\end{align}
and $H_{\rho}$ is the Hessian matrix of $\rho$.

\end{proposition}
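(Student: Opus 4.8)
The plan is to carry out the classical tensor/interaction Morawetz computation (in the spirit of \cite{colliander2009tensor,planchon2009bilinear}), exploiting the structural fact that the weight $\rho$ depends only on the Euclidean difference $x-y$ and not on the $\mathcal{OU}$ variables $\alpha,\beta$. Because of this, $\alpha$ and $\beta$ act as inert ``spectator'' directions that are simply integrated out, exactly as the compact factor is handled in the waveguide setting \cite{TV2,yu2024global,SYYZ}. Concretely, I would differentiate $I_{\rho}$ twice in time, invoking at each stage a local conservation law for \eqref{maineq2}; all computations are first performed for smooth, rapidly decaying solutions and then extended by density.

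The first ingredient is the local mass identity. Writing $\mathcal{L}_\alpha u := \nabla_\alpha\cdot\parenthese{e^{-\frac{\alpha^2}{2}}\nabla_\alpha u}$, the equation \eqref{maineq2} gives
\[
\partial_t\abs{u}^2 + \nabla_x\cdot\parenthese{2\im(\overline{u}\,\nabla_x u)} + \nabla_\alpha\cdot\parenthese{2\im(\overline{u}\,e^{-\frac{\alpha^2}{2}}\nabla_\alpha u)} = 0,
\]
where the nonlinearity drops out since $\im(\overline{u}\,\abs{u}^p u)=0$ and the drift term is already a perfect $\alpha$-divergence (this is where the divergence form pays off). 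The key observation is that, once integrated against $\rho(x-y)$, the entire $\nabla_\alpha$-divergence contributes nothing, because $\rho$ is independent of $\alpha$ and the Gaussian weight suppresses the boundary terms as $\alpha\to\pm\infty$. Hence only the Euclidean currents $p^u:=2\im(\overline{u}\,\nabla_x u)$ and $p^v:=2\im(\overline{v}\,\nabla_y v)$ survive.

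Differentiating $I_{\rho}$ once, substituting the mass identity for both factors, and integrating by parts in $x$ and $y$ (using $\nabla_x\rho=-\nabla_y\rho$), the $\alpha$- and $\beta$-divergences vanish as above and I obtain the schematic first-order expression
\[
\partial_t I_{\rho} = \iint (\nabla\rho)(x-y)\cdot\sq{\,p^u(x,\alpha)\,\abs{v}^2(y,\beta) - p^v(y,\beta)\,\abs{u}^2(x,\alpha)\,}\,dx\,d\alpha\,dy\,d\beta.
\]
Differentiating a second time, the time derivative falls either on a current (handled by the local momentum balance) or on a mass density (handled again by the mass identity, which reintroduces the opposite current). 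The decisive verification for the momentum balance is that the drift terms cancel after integrating in $\alpha$: the $\mathcal{OU}$ contribution to $\partial_t p^u_j$ is
\[
-2\re\parenthese{\mathcal{L}_\alpha\overline{u}\,\partial_{x_j}u} + 2\re\parenthese{\overline{u}\,\mathcal{L}_\alpha\partial_{x_j}u},
\]
whose $\alpha$-integral vanishes by the self-adjointness of $\mathcal{L}_\alpha$ on (unweighted) $L^2_\alpha$ together with $[\mathcal{L}_\alpha,\partial_{x_j}]=0$. Thus, after integration in $\alpha$, $p^u$ satisfies the purely Euclidean momentum balance $\partial_t p^u_j+\partial_{x_k}T^u_{jk}=0$, whose stress tensor has a kinetic part built from $4\re(\partial_{x_j}\overline{u}\,\partial_{x_k}u)$ and $\partial_{x_j}\partial_{x_k}\abs{u}^2$, together with a nonlinear part proportional to $\delta_{jk}\abs{u}^{p+2}$.

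Finally I would assemble the pieces. Contracting the kinetic stress tensor and the mass-current cross terms against the Hessian $\partial_{x_j}\partial_{x_k}\rho$ and integrating by parts, the biharmonic ($\Delta_x^2\rho$) contributions cancel and the remainder recombines exactly into the bilinear current $F(u,v)=\overline{v}\,\nabla_x u + u\,\nabla_y\overline{v}$, producing the term $4\iint H_{\rho}(F,\overline{F})$; contracting the nonlinear part with the Hessian and using $\sum_j\partial_{x_j}^2\rho=\Delta_x\rho$ yields the two potential terms with the stated coefficient $\frac{p}{p+2}$. I expect the main obstacle to be twofold. First, the purely algebraic bookkeeping that recombines the kinetic stress tensor and the mass-current cross terms into the clean form $4H_{\rho}(F,\overline{F})$, with the biharmonic terms cancelling — this is the delicate identity already present in the Euclidean interaction Morawetz computation. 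Second, and genuinely new here, is controlling every contribution of the Gaussian-weighted drift $\mathcal{L}_\alpha$; this is resolved by the cancellation displayed above, which hinges on the divergence structure of $\mathcal{L}_\alpha$, its self-adjointness on $L^2_\alpha$, its commutation with $\nabla_x$, and the Gaussian decay that kills the $\alpha$-boundary terms — precisely the mechanism by which the $\mathcal{OU}$ directions behave as free dimensions.
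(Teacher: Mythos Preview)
Your proposal is correct and follows essentially the same route as the paper: exploit that $\rho$ ignores $\alpha,\beta$, show the $\mathcal{OU}$ contributions drop out after integration in the confined variable, and then quote the purely Euclidean interaction Morawetz identity of Planchon--Vega for the remaining $x,y$ computation. The only notable difference is presentational: at the second-derivative stage the paper simply writes $\partial_t^2 I_\rho=\int J\,d\alpha\,d\beta$ and invokes \cite{PV} for $J$, whereas you explicitly verify (via self-adjointness of $\mathcal{L}_\alpha$ on $L^2_\alpha$ and $[\mathcal{L}_\alpha,\partial_{x_j}]=0$) that the $\mathcal{OU}$ terms in the momentum balance integrate to zero --- this makes transparent the step the paper leaves implicit.
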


\begin{remark}
The functional $I_{\rho}$ measures the interaction between two solutions $u$ and $v$ weighted by $\rho (x-y)$, generalizing the classical momentum-based Morawetz identity to the $\mathcal{OU}$ setting. The weight \(e^{-\frac{\alpha^2}{2}}\) in \(I_\rho\) localizes interactions in the \(\alpha\)-direction, mimicking compactness. The weighted Morawetz estimate quantifies how the Gaussian drift confines nonlinear energy transfer, preventing soliton formation.   
\end{remark}

\begin{proof}[Proof of Proposition \ref{prop Morawetz}]

Recall that
\begin{align}\label{eq 2}
\begin{aligned}
 \partial_t u & = -i [\abs{u}^p u  - \Delta_x u - \nabla_{\alpha} \cdot (\nabla_{\alpha} u e^{-\frac{\alpha^2}{2}} ) ] \\
\partial_t \overline{u} & = i [\abs{u}^p \overline{u}  - \Delta_x \overline{u}  - \nabla_{\alpha} \cdot (\nabla_{\alpha} \overline{u} e^{-\frac{\alpha^2}{2}} )  ]  
\end{aligned}
\end{align}
then using the equation, we write
\begin{align}
\partial_t \abs{u}^2 (t,x, \alpha)  & = 2 \re u_t \overline{u} (t,x,\alpha) \\
& = 2 \re [-i (\abs{u}^p u  - \Delta_x u - \nabla_{\alpha} \cdot (\nabla_{\alpha} u e^{-\frac{\alpha^2}{2}} ) ) \overline{u}] \\
& = 2 \im  [(\abs{u}^p u  - \Delta_x u - \nabla_{\alpha} \cdot (\nabla_{\alpha} u e^{-\frac{\alpha^2}{2}} )) \overline{u}] \\
& = 2 \im  [(- \Delta_x u - \nabla_{\alpha} \cdot (\nabla_{\alpha} u e^{-\frac{\alpha^2}{2}} )) \overline{u}]  \\
& = 2 \im [-\nabla_x \cdot (\overline{u} \nabla_x u) ]    + 2 \im [-\nabla_{\alpha} \cdot ( \nabla_{\alpha} u e^{-\frac{\alpha^2}{2}}) \overline{u}] . \label{eq 4}
\end{align}

We now take the first derivative of $I_{\rho}$ with respect to time $t$. Using the equation and integration by parts, we have
\begin{align}
\partial_t I_{\rho} & = \iint \rho (x-y) \sq{\partial_t \abs{u}^2 (x,\alpha) } \abs{v}^2 (y, \beta)  \, dx d\alpha dy d\beta  + \iint \rho (x-y)  \abs{u}^2  (x,\alpha)  \sq{\partial_t \abs{v}^2 (y, \beta)  }   \, dx d\alpha dy d\beta \\
& = 2 \im \iint \rho (x-y) \sq{- \nabla_x (\overline{u} \nabla_x u)   - \nabla_{\alpha} \cdot ( \nabla_{\alpha} u e^{-\frac{\alpha^2}{2}}) \overline{u}} (x,\alpha) \abs{v}^2 (y, \beta)  \, dx d\alpha dy d\beta  \\
& \quad + 2 \im \iint \rho (x-y) \abs{u}^2  (x,\alpha)  \sq{- \nabla_y (\overline{v} \nabla_y v) - \nabla_{\beta} \cdot(\nabla_{\beta} v e^{-\frac{\beta^2}{2}}) \overline{v} } (y,\beta)  \, dx d\alpha dy d\beta \\
& = 2 \im \iint \nabla_x \rho \sq{ (\overline{u} \nabla_x u) (x,\alpha) }  \abs{v}^2 (y,\beta)  \, dx d\alpha dy d\beta  \\
& \quad + 2 \im \iint \rho \sq{ \abs{ \nabla_{\alpha} u}^2  (x,\alpha) e^{-\frac{\alpha^2}{2}} } \abs{v}^2 (y,\beta)  \, dx d\alpha dy d\beta \\
& \quad + 2 \im \iint \nabla_y \rho \abs{u}^2  (x,\alpha)   \sq{   (\overline{v} \nabla_y v) (y,\beta) }  \, dx d\alpha dy d\beta   \\
& \quad + 2 \im \iint \rho  \abs{u}^2  (x,\alpha)  \sq{ \abs{\nabla_{\beta} v }^2 (y,\beta) e^{-\frac{\beta^2}{2}}} \, dx d\alpha dy d\beta .
\end{align}

Notice that $\nabla_x \rho = - \nabla_y \rho$, we have
\begin{align}
\partial_t I_{\rho} & = 2 \im \iint \nabla_x \rho \sq{ (\overline{u} \nabla_x u) (x,\alpha)  \abs{v}^2 (y,\beta)  -\abs{u}^2  (x,\alpha)  (\overline{v} \nabla_y v) (y,\beta) } \, dx d\alpha dy d\beta \\
& \quad  + 2 \im \iint \rho  \sq{ \abs{\nabla_{\alpha} u }^2  (x,\alpha) e^{-\frac{\alpha^2}{2}}  \abs{v}^2 (y,\beta)  + \abs{u}^2  (x,\alpha)  \abs{ \nabla_{\beta} v}^2 (y,\beta) e^{-\frac{\beta^2}{2}}}  \, dx d\alpha dy d\beta  \\
& = 2 \im \iint \nabla_x \rho \sq{ (\overline{u} \nabla_x u) (x,\alpha)  \abs{v}^2 (y,\beta)  -\abs{u}^2  (x,\alpha)  (\overline{v} \nabla_y v) (y,\beta) } \, dx d\alpha dy d\beta \label{eq 1}.
\end{align}
Note that the second term in the integral above is real, hence the integral is equal to zero.

Next, we will compute the second derivative of $I_{\rho}$ in $t$, which is the derivative of \eqref{eq 1}. 
\begin{align}
\partial_t^2 I_{\rho} & = 2 \im \partial_t \iint \nabla_x \rho \sq{ (\overline{u} \nabla_x u) (x,\alpha)  \abs{v}^2 (y,\beta)  -\abs{u}^2  (x,\alpha)  (\overline{v} \nabla_y v) (y,\beta) } \, dx d\alpha dy d\beta \\ 
& = : \int J  \, d\alpha d \beta. \label{eq 3}
\end{align}

Recall that in Section 4.3 in \cite{PV}, the following Morawetz computation for NLS  on $J$ term is given by
\begin{align}
J & = 4 \int H_{\rho} (x-y) (F(u,v) (x,y) , \overline{F} (u,v) (x,y)) \, dxdy \\
& \quad + \frac{p}{p+2}  \int  \abs{v}^2 (y) (\Delta_x \rho) (x-y) \abs{u}^{p+2} (x) \, dxdy \\
& \quad +  \frac{p}{p+2} \int  \abs{u}^2 (x) (\Delta_x \rho) (x-y) \abs{v}^{p+2} (y) \, dxdy 
\end{align}
where
\begin{align}
F(u,v) : = \overline{v} (y) \nabla_x u(x) + u(x) \nabla_y \overline{v}(y) 
\end{align}
and $H_{\rho}$ is the Hessian matrix of $\rho$, hence  symmetric.

Then back to \eqref{eq 3}
\begin{align}
\eqref{eq 3} = \partial_t^2 I_{\rho} & =  \int J  \, d\alpha d \beta  \\
& = 4 \iint H_{\rho} (x-y) (F(u,v) (x, \alpha , y , \beta) , \overline{F} (u,v) (x, \alpha , y , \beta)) \, dxdy d\alpha d \beta \\
& \quad + \int  \frac{p}{p+2}  \int  \abs{v}^2 (y, \beta) (\Delta_x \rho) (x-y) \abs{u}^{p+2} (x, \alpha) \, dxdy  d\alpha d \beta \\
& \quad + \int \frac{p}{p+2} \int  \abs{u}^2 (x, \alpha) (\Delta_x \rho) (x-y) \abs{v}^{p+2} (y,\beta) \, dxdy d\alpha d \beta, 
\end{align}
which completes the proof of Proposition \ref{prop Morawetz}.
\end{proof}

\begin{corollary}[Interaction Morawetz Estimate]\label{cor Morawetz}
For $\rho (x-y) = \abs{x-y}$ or $\inner{x-y}$, 
\begin{align}
&  \int \iint  \frac{p}{p+2}  \abs{u}^2 (y) (\Delta_x \rho) (x-y) \abs{u}^{p+2} (x) \, dxdy d\alpha d \beta dt\\
& \quad + \int \iint \frac{p}{p+2}   \abs{u}^2 (x) (\Delta_x \rho) (x-y) \abs{u}^{p+2} (y) \, dxdy d\alpha d \beta dt  \\
& \lesssim \norm{u}_{L_x^2 L_\alpha^2}^3  \norm{u}_{\dot{H}_x^1 L_\alpha^2} .
\end{align}
\end{corollary}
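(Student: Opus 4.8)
The plan is to specialize the interaction Morawetz identity of Proposition \ref{prop Morawetz} to the diagonal case $v = u$, integrate it over the time interval $I$, and balance the nonnegative nonlinear terms against a momentum-type bound on the first time derivative. With $v = u$, the identity \eqref{eq Morawetz} decomposes $\partial_t^2 I_\rho$ into the Hessian (momentum) term $4\iint H_\rho(x-y)(F,\overline{F})$ and precisely the two nonlinear integrands that constitute the left-hand side of the corollary.

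First I would verify that for $\rho(x-y) = \abs{x-y}$ or $\inner{x-y}$ the Hessian $H_\rho$ is positive semidefinite. Since $H_\rho(F,\overline{F})$ is then a Hermitian form with nonnegative matrix, the momentum term is nonnegative and may be discarded. This yields the pointwise-in-time inequality in which the sum of the two nonlinear terms is dominated by $\partial_t^2 I_\rho$.

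Next I would integrate in time. By the fundamental theorem of calculus $\int_I \partial_t^2 I_\rho \, dt = \partial_t I_\rho \big|_{\partial I}$, so the left-hand side of the corollary is bounded by $2\sup_{t} \abs{\partial_t I_\rho(t)}$. I would then control $\partial_t I_\rho$ through the explicit momentum expression \eqref{eq 1} derived inside the proof of Proposition \ref{prop Morawetz}. Since $\abs{\nabla_x \rho} \le 1$ for both choices of $\rho$, an application of Cauchy--Schwarz in $x$ and Fubini in the remaining variables gives $\abs{\partial_t I_\rho} \lesssim \norm{u}_{L^2_x L^2_\alpha}^3 \, \norm{u}_{\dot H^1_x L^2_\alpha}$, with the $\alpha$-integration taken against Lebesgue measure. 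The conservation of the (unweighted) mass $\norm{u(t)}_{L^2_{x,\alpha}}$ for \eqref{maineq2}, recorded in Proposition \ref{prop: Conserv}, renders the $L^2$ factors uniform in $t$, while the $\dot H^1_x$ factor is taken as its supremum over $I$; together these produce the stated estimate.

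The crux of the argument lies in the two places where the $\mathcal{OU}$ structure must be shown to be benign. The positivity of the momentum term reduces to the classical $d$-dimensional statement because $F$ involves only the Euclidean gradients $\nabla_x,\nabla_y$ and $\rho$ depends only on $x-y$, so the confined variables $\alpha,\beta$ act as mere spectator parameters. More delicately, one must confirm that the Gaussian-weighted terms generated by $\nabla_\alpha u$ and $\nabla_\beta v$ cancel in $\partial_t I_\rho$; this is exactly the observation in \eqref{eq 1} that the corresponding contribution is real and is therefore annihilated by the imaginary part, so that no weighted norm survives into the final estimate. This cancellation, enabled by choosing an $\alpha$-independent weight $\rho(x-y)$, is what keeps the right-hand side expressed purely in the flat $L^2_x L^2_\alpha$ and $\dot H^1_x L^2_\alpha$ norms.
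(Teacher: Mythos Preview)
Your proposal is correct and follows essentially the same route as the paper: set $v=u$, discard the nonnegative Hessian term for these convex choices of $\rho$, integrate in time, and bound $\sup_t|\partial_t I_\rho|$ directly from \eqref{eq 1} using $|\nabla_x\rho|\le 1$ together with Cauchy--Schwarz. The additional observations you make (the role of the mass conservation in making the $L^2$ factors uniform in $t$, and the cancellation of the $\nabla_\alpha u$ contribution in \eqref{eq 1} because the weight $\rho$ is $\alpha$-independent) are exactly the points underlying the paper's terse ``straightforward computation.''
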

\begin{remark}
For \(\rho = |x-y|\), our estimate reduces to the waveguide case (\cite{TV2}, Cor. 5.2) when \(e^{-\frac{\alpha^2}{2}} \equiv 1\).    
\end{remark}
\begin{proof}[Proof of Corollary \ref{cor Morawetz}]
Notice that in Proposition \ref{prop Morawetz}, when taking $u=v$ and choosing $\rho (x-y) = \abs{x-y}$ or $\inner{x-y}$, we see that 
\begin{align}
\iint H_{\rho} (x-y) (F(u,v) (x,y) , \overline{F} (u,v) (x,y)) \, dxdy d\alpha d \beta \geq 0,
\end{align}
hence we ignore this term from \eqref{eq Morawetz}.

Under the choice of $\rho (x-y) = \abs{x-y}$ or $\inner{x-y}$, we compute
\begin{align}
\Delta_x \abs{x-y} &  = \Delta_y \abs{x-y}= \frac{d-1} { \abs{x-y}}  \\
\Delta_x \inner{x-y} & = \Delta_y \inner{x-y}= \frac{d-1} { \inner{x-y}} + \frac{1}{\inner{x-y}^3}.
\end{align}

Recall \eqref{eq 1}, a straightforward computation yields 
\begin{align}
\abs{\partial_t I_{\rho}} \lesssim \norm{u}_{L_x^2 L_\alpha^2}^3  \norm{u}_{\dot{H}_x^1 L_\alpha^2}. 
\end{align}
Combining the above information, we have
\begin{align}
&  \int \iint \frac{p}{p+2}  \abs{u}^2 (y) (\Delta_x \rho) (x-y) \abs{u}^{p+2} (x) \, dxdy d\alpha d \beta dt\\
& \quad + \int \iint \frac{p}{p+2}   \abs{u}^2 (x) (\Delta_x \rho) (x-y) \abs{u}^{p+2} (y) \, dxdy d\alpha d \beta dt  \\
& \lesssim \norm{u}_{L_x^2 L_\alpha^2}^3  \norm{u}_{\dot{H}_x^1 L_\alpha^2} .
\end{align}

This completes the proof of Corollary \ref{cor Morawetz}.
\end{proof}

This completes the interaction Morawetz estimates for \eqref{maineq2}. 

\subsection{Gaussian-weighted Morawetz Estimates for \eqref{maineq1}}\label{Morawetz 2}
We now turn to \eqref{maineq1}. We have,
\begin{proposition}[Interaction Morawetz Identity]\label{prop Morawetz2}

Let $u(t,x,\alpha)$ that satisfies 
\begin{align}
(i\partial_t  + \Delta_x + \Delta_{\alpha} - \alpha \cdot \nabla_{\alpha} ) u & = \omega(\alpha) \abs{u}^p u 
\end{align}
and $v(t,y,\beta)$ that verifies
\begin{align}
(i\partial_t  + \Delta_y + \Delta_{\beta} - \beta \cdot \nabla_{\beta} ) v & =  \omega(\beta) \abs{v}^p v.
\end{align}
Set a bilinear Morawetz interaction functional of the following form
\begin{align}
I_{\rho} :=  \iint_{(\R^d \times \R) \times (\R^d \times \R)} \rho (x-y) \abs{u}^2 (x , \alpha) \abs{v}^2 (y, \beta) e^{-\frac{\alpha^2}{2}}  e^{-\frac{\beta^2}{2}} \, dx d\alpha dy d\beta .
\end{align}
Note here $\rho (x-y)$ depends only on $x$ and $y$, not on $\alpha$ or $\beta$, and we also assume the symmetry on $\rho$, that is, $\nabla_x \rho = - \nabla_y \rho$.

Then we have 
\begin{align}\label{eq Morawetz2}
\begin{aligned}
\partial_t^2 I_{\rho} & = 4 \iint H_{\rho} (x-y) (F(u,v) (x, \alpha , y , \beta) , \overline{F} (u,v) (x, \alpha , y , \beta)) \, dxdy e^{- \frac{\alpha^2}{2} -\frac{\beta^2}{2}}  \, d\alpha d \beta \\
& \quad + \int \frac{p}{p+2}  \int  \abs{v}^2 (y, \beta) (\Delta_x \rho) (x-y) \abs{u}^{p+2} (x, \alpha) \, dxdy  \omega(\alpha) e^{- \frac{\alpha^2}{2} -\frac{\beta^2}{2}} \, d\alpha d \beta \\
& \quad + \int \frac{p}{p+2} \int  \abs{u}^2 (x, \alpha) (\Delta_x \rho) (x-y) \abs{v}^{p+2} (y,\beta) \, dxdy \omega(\beta) e^{- \frac{\alpha^2}{2} -\frac{\beta^2}{2}} \, d\alpha d \beta 
\end{aligned}
\end{align}
where
\begin{align}
F(u,v) : = \overline{v} (y,\beta) \nabla_x u(x, \alpha) + u(x, \alpha) \nabla_y \overline{v}(y,\beta) 
\end{align}
and $H_{\rho}$ is the Hessian matrix of $\rho$.

\end{proposition}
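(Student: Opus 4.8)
The plan is to mirror the proof of Proposition \ref{prop Morawetz}, the essential point being that the Gaussian weights $e^{-\frac{\alpha^2}{2}}$ and $e^{-\frac{\beta^2}{2}}$ built into the definition of $I_\rho$ are precisely what convert the non-divergence form $\mathcal{OU}$ operator into divergence form via identity \eqref{eq OU}. Once this conversion is made, the computation becomes structurally identical to the divergence-form case, and no genuinely new analytic difficulty arises.

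First I would compute $\partial_t \abs{u}^2$ from the equation for $u$. Since the nonlinear contribution $\omega(\alpha) \abs{u}^{p+2}$ is real, it drops out of the imaginary part, leaving an expression involving only $-\Delta_x u$ and $-(\Delta_\alpha - \alpha \cdot \nabla_\alpha) u$. Multiplying by $e^{-\frac{\alpha^2}{2}}$ and invoking \eqref{eq OU} to rewrite $-(\Delta_\alpha - \alpha \cdot \nabla_\alpha) u \, e^{-\frac{\alpha^2}{2}}$ as $-\nabla_\alpha \cdot (\nabla_\alpha u \, e^{-\frac{\alpha^2}{2}})$, I arrive at exactly the weighted form appearing in \eqref{eq 4}. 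From here the first time-derivative computation runs verbatim: after integrating by parts in $x$ (and in $y$ for the $v$-factor), the Gaussian-weighted term $\abs{\nabla_\alpha u}^2 e^{-\frac{\alpha^2}{2}}$ is real and therefore vanishes under the imaginary part, leaving the momentum-type functional \eqref{eq 1}.

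For the second derivative $\partial_t^2 I_\rho$, the key structural observation is that $\rho(x-y)$, its Hessian $H_\rho$, and $\Delta_x \rho$ depend only on the Euclidean variables $x,y$, whereas the weights depend only on $\alpha,\beta$. Consequently, at each fixed pair $(\alpha,\beta)$ the $(x,y)$-integration reduces to the standard Euclidean interaction Morawetz computation of Section 4.3 in \cite{PV}, which produces the Hessian term together with the two nonlinearity terms. Applying that computation fiberwise and then integrating against $e^{-\frac{\alpha^2}{2} - \frac{\beta^2}{2}} \, d\alpha \, d\beta$, with the nonlinearity weights $\omega(\alpha), \omega(\beta)$ carried along, yields the claimed identity \eqref{eq Morawetz2}.

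The main obstacle is purely bookkeeping: one must carefully track how the two Gaussian weights interact with the integration by parts in the $\alpha$- and $\beta$-directions, ensuring that the drift terms reassemble correctly through \eqref{eq OU}. There is no new conceptual hurdle beyond Proposition \ref{prop Morawetz}; indeed, the weight in $I_\rho$ is tailored precisely so that the non-divergence problem collapses onto the already-established divergence computation.
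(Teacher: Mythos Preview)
Your proposal is correct and follows essentially the same approach as the paper's proof: compute $\partial_t\bigl(\abs{u}^2 e^{-\frac{\alpha^2}{2}}\bigr)$ using the equation, invoke \eqref{eq OU} to convert the $\mathcal{OU}$ term to divergence form, observe that the resulting $\abs{\nabla_\alpha u}^2 e^{-\frac{\alpha^2}{2}}$ contributions are real and hence vanish under the imaginary part, and then reduce the second derivative fiberwise in $(\alpha,\beta)$ to the classical Euclidean computation of \cite{PV}. The only cosmetic difference is that the paper writes out the analogue of \eqref{eq 1} explicitly (as a new display with the Gaussian weights inserted) rather than citing \eqref{eq 1} directly, but the logic is identical.
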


\begin{proof}[Proof of Proposition \ref{prop Morawetz2}]

Recall that
\begin{align}\label{eq 22}
\begin{aligned}
 \partial_t u & = -i (\abs{u}^p u \omega(\alpha)  - \Delta_x u - \Delta_{\alpha} u + \alpha \cdot \nabla_{\alpha} u) \\
\partial_t \overline{u} & = i (\abs{u}^p \overline{u} \omega(\alpha) - \Delta_x \overline{u}  - \Delta_{\alpha} \overline{u} + \alpha \cdot \nabla_{\alpha} \overline{u})   
\end{aligned}
\end{align}
then using the equation we write
\begin{align}
\partial_t \abs{u}^2 (t,x, \alpha) e^{-\frac{\alpha^2}{2}}  & = 2 \re u_t \overline{u} (t,x,\alpha) e^{-\frac{\alpha^2}{2}}   \\
& = 2 \re [-i (\abs{u}^p u \omega(\alpha) - \Delta_x u - \Delta_{\alpha} u + \alpha \cdot \nabla_{\alpha} u) \overline{u}] e^{-\frac{\alpha^2}{2}}  \\
& = 2 \im  [(\abs{u}^p u \omega(\alpha) - \Delta_x u - \Delta_{\alpha} u + \alpha \cdot \nabla_{\alpha} u) \overline{u}] e^{-\frac{\alpha^2}{2}}   \\
& = 2 \im  [(- \Delta_x u - \Delta_{\alpha} u + \alpha \cdot \nabla_{\alpha} u) \overline{u}] e^{-\frac{\alpha^2}{2}}  \\
& = 2 \im [-\nabla_x \cdot ( \overline{u} \nabla_x u) e^{-\frac{\alpha^2}{2}} ]  + 2 \im  [ - \Delta_{\alpha} u + \alpha \cdot \nabla_{\alpha} u) \overline{u} ] e^{-\frac{\alpha^2}{2}}  \\ 
& = 2 \im [-\nabla_x \cdot (\overline{u} \nabla_x u) e^{-\frac{\alpha^2}{2}} ]    + 2 \im [-\nabla_{\alpha} \cdot ( \nabla_{\alpha} u e^{-\frac{\alpha^2}{2}}) \overline{u}] .
\end{align}

We now take the first derivative of $I_{\rho}$ with respect to time $t$. Using the equation and integration by parts, we have
\begin{align}
\partial_t I_{\rho} & = \iint \rho (x-y) \sq{\partial_t \abs{u}^2 (x,\alpha)e^{-\frac{\alpha^2}{2}} } \abs{v}^2 (y, \beta) e^{-\frac{\beta^2}{2}} \, dx d\alpha dy d\beta \\
& \quad + \iint \rho (x-y)  \abs{u}^2  (x,\alpha) e^{-\frac{\alpha^2}{2}}  \sq{\partial_t \abs{v}^2 (y, \beta) e^{-\frac{\beta^2}{2}} }   \, dx d\alpha dy d\beta \\
& = 2 \im \iint \rho (x-y) \sq{- \nabla_x (\overline{u} \nabla_x u)  e^{-\frac{\alpha^2}{2}}  - \nabla_{\alpha} \cdot ( \nabla_{\alpha} u e^{-\frac{\alpha^2}{2}}) \overline{u}} (x,\alpha) \abs{v}^2 (y, \beta) e^{-\frac{\beta^2}{2}} \, dx d\alpha dy d\beta  \\
& \quad + 2 \im \iint \rho (x-y) \abs{u}^2  (x,\alpha) e^{-\frac{\alpha^2}{2}} \sq{- \nabla_y (\overline{v} \nabla_y v) e^{-\frac{\beta^2}{2}} - \nabla_{\beta} \cdot(\nabla_{\beta} v e^{-\frac{\beta^2}{2}}) \overline{v} } (y,\beta)  \, dx d\alpha dy d\beta \\
& = 2 \im \iint \nabla_x \rho \sq{ (\overline{u} \nabla_x u) (x,\alpha) e^{-\frac{\alpha^2}{2}}}  \abs{v}^2 (y,\beta) e^{-\frac{\beta^2}{2}} \, dx d\alpha dy d\beta  \\
& \quad + 2 \im \iint \rho \sq{ \abs{ \nabla_{\alpha} u}^2  (x,\alpha) e^{-\frac{\alpha^2}{2}} } \abs{v}^2 (y,\beta) e^{-\frac{\beta^2}{2}} \, dx d\alpha dy d\beta \\
& \quad + 2 \im \iint \nabla_y \rho \abs{u}^2  (x,\alpha) e^{-\frac{\alpha^2}{2}}  \sq{   (\overline{v} \nabla_y v) (y,\beta) e^{-\frac{\beta^2}{2}}}  \, dx d\alpha dy d\beta   \\
& \quad + 2 \im \iint \rho  \abs{u}^2  (x,\alpha) e^{-\frac{\alpha^2}{2}} \sq{ \abs{\nabla_{\beta} v }^2 (y,\beta) e^{-\frac{\beta^2}{2}}} \, dx d\alpha dy d\beta .
\end{align}

Notice that $\nabla_x \rho = - \nabla_y \rho$, we have
\begin{align}
\partial_t I_{\rho} & = 2 \im \iint \nabla_x \rho \sq{ (\overline{u} \nabla_x u) (x,\alpha) e^{-\frac{\alpha^2}{2}} \abs{v}^2 (y,\beta)e^{-\frac{\beta^2}{2}}  -\abs{u}^2  (x,\alpha) e^{-\frac{\alpha^2}{2}} (\overline{v} \nabla_y v) (y,\beta) e^{-\frac{\beta^2}{2}}} \, dx d\alpha dy d\beta \\
& \quad  + 2 \im \iint \rho  \sq{ \abs{\nabla_{\alpha} u }^2  (x,\alpha) e^{-\frac{\alpha^2}{2}}  \abs{v}^2 (y,\beta) e^{-\frac{\beta^2}{2}} + \abs{u}^2  (x,\alpha) e^{-\frac{\alpha^2}{2}} \abs{ \nabla_{\beta} v}^2 (y,\beta) e^{-\frac{\beta^2}{2}}}  \, dx d\alpha dy d\beta  \\
 & = 2 \im \iint \nabla_x \rho \sq{ (\overline{u} \nabla_x u) (x,\alpha) e^{-\frac{\alpha^2}{2}} \abs{v}^2 (y,\beta)e^{-\frac{\beta^2}{2}}  -\abs{u}^2  (x,\alpha) e^{-\frac{\alpha^2}{2}} (\overline{v} \nabla_y v) (y,\beta) e^{-\frac{\beta^2}{2}}} \, dx d\alpha dy d\beta. \label{eq 12}
\end{align}
Note that the second term in the integral above is real, hence the integral is equal to zero.

Next, we will compute the second derivative of $I_{\rho}$ in $t$, which is the derivative of \eqref{eq 12}. 
\begin{align}
\partial_t^2 I_{\rho} & = 2 \im \partial_t \iint \nabla_x \rho \sq{(\overline{u} \nabla_x u) (x,\alpha)  \abs{v}^2 (y,\beta) -\abs{u}^2  (x,\alpha)  (\overline{v} \nabla_y v) (y,\beta)} e^{-\frac{\alpha^2}{2}-\frac{\beta^2}{2}} \, dx dy d\alpha  d\beta \\ 
& = : \int J e^{- \frac{\alpha^2}{2} -\frac{\beta^2}{2}}  \, d\alpha d \beta.\label{eq 32}
\end{align}

Recall that in Section 4.3 in \cite{PV}, the following Morawetz computation for NLS  on $J$ term is given by
\begin{align}
J & = 4 \int H_{\rho} (x-y) (F(u,v) (x,y) , \overline{F} (u,v) (x,y)) \, dxdy \\
& \quad + \omega(\alpha) \frac{p}{p+2}  \int  \abs{v}^2 (y) (\Delta_x \rho) (x-y) \abs{u}^{p+2} (x) \, dxdy \\
& \quad + \omega(\beta) \frac{p}{p+2} \int  \abs{u}^2 (x) (\Delta_x \rho) (x-y) \abs{v}^{p+2} (y) \, dxdy, 
\end{align}
where
\begin{align}
F(u,v) : = \overline{v} (y) \nabla_x u(x) + u(x) \nabla_y \overline{v}(y) 
\end{align}
and $H_{\rho}$ is the Hessian matrix of $\rho$, hence  symmetric.

Then back to \eqref{eq 32}
\begin{align}
\eqref{eq 32} = \partial_t^2 I_{\rho} & =  \int J e^{- \frac{\alpha^2}{2} -\frac{\beta^2}{2}}  \, d\alpha d \beta  \\
& = 4 \iint H_{\rho} (x-y) (F(u,v) (x, \alpha , y , \beta) , \overline{F} (u,v) (x, \alpha , y , \beta)) \, dxdy e^{- \frac{\alpha^2}{2} -\frac{\beta^2}{2}}  \, d\alpha d \beta \\
& \quad + \int \frac{p}{p+2}  \int  \abs{v}^2 (y, \beta) (\Delta_x \rho) (x-y) \abs{u}^{p+2} (x, \alpha) \, dxdy  \omega(\alpha) e^{- \frac{\alpha^2}{2} -\frac{\beta^2}{2}} \, d\alpha d \beta \\
& \quad + \int \frac{p}{p+2} \int  \abs{u}^2 (x, \alpha) (\Delta_x \rho) (x-y) \abs{v}^{p+2} (y,\beta) \, dxdy \omega(\beta) e^{- \frac{\alpha^2}{2} -\frac{\beta^2}{2}} \, d\alpha d \beta, 
\end{align}
which completes the proof of Proposition \ref{prop Morawetz2}.
\end{proof}

\begin{corollary}[Interaction Morawetz estimate]\label{cor Morawetz2}
For $\rho (x-y) = \abs{x-y}$ or $\inner{x-y}$, 
\begin{align}
&  \int \iint \frac{p}{p+2}  \abs{u}^2 (y) (\Delta_x \rho) (x-y) \abs{u}^{p+2} (x) \, dxdy \omega(\alpha) e^{- \frac{\alpha^2}{2} -\frac{\beta^2}{2}}  \, d\alpha d \beta dt\\
& \quad + \int \iint \frac{p}{p+2}   \abs{u}^2 (x) (\Delta_x \rho) (x-y) \abs{u}^{p+2} (y) \, dxdy \omega(\beta) e^{- \frac{\alpha^2}{2} -\frac{\beta^2}{2}} \, d\alpha d \beta dt  \\
& \lesssim \norm{u}_{L_x^2 \mathcal{L}_\alpha^2}^3  \norm{u}_{\dot{H}_x^1 \mathcal{L}_\alpha^2} .
\end{align}
    
\end{corollary}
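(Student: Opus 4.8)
The plan is to follow the divergence-case argument of Corollary \ref{cor Morawetz}, now starting from the interaction identity \eqref{eq Morawetz2} of Proposition \ref{prop Morawetz2}. First I would set $v=u$ and specialize to the convex radial weights $\rho(x-y)=|x-y|$ or $\langle x-y\rangle$. For these choices the Hessian $H_\rho$ is positive semi-definite, so the leading term on the right of \eqref{eq Morawetz2}, namely $4\iint H_\rho(x-y)(F,\overline{F})\,dx\,dy\,e^{-\frac{\alpha^2}{2}-\frac{\beta^2}{2}}\,d\alpha\,d\beta$, is nonnegative: the integrand is a pointwise-in-$(\alpha,\beta)$ nonnegative quadratic form, and the Gaussian factor $e^{-\frac{\alpha^2}{2}-\frac{\beta^2}{2}}>0$ does not alter the sign. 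Discarding it converts the identity into the differential inequality $\partial_t^2 I_\rho \ge (\text{the two }\omega\text{-weighted interaction terms})$, and those terms are precisely the quantities the corollary seeks to bound.

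Since these two terms involve only the Laplacian $\Delta_x\rho$ in the Euclidean variables, the relevant computations coincide with the divergence case:
\begin{align*}
\Delta_x |x-y| = \frac{d-1}{|x-y|}, \qquad \Delta_x \langle x-y\rangle = \frac{d-1}{\langle x-y\rangle} + \frac{1}{\langle x-y\rangle^3},
\end{align*}
both of which are nonnegative, confirming that the interaction terms carry a definite sign. Integrating the inequality over a time interval and applying the fundamental theorem of calculus, the spacetime integral of the (nonnegative) interaction terms is dominated by the boundary contribution $\sup_t |\partial_t I_\rho|$.

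It then remains to bound $|\partial_t I_\rho|$ uniformly in $t$ from the first-derivative identity \eqref{eq 12}. Since $|\nabla_x\rho|\le 1$ for both admissible $\rho$, the Cauchy--Schwarz inequality decouples the $(x,\alpha)$- and $(y,\beta)$-integrations and gives
\begin{align*}
|\partial_t I_\rho| \lesssim \Big(\iint |\nabla_x u|\,|u|\,e^{-\frac{\alpha^2}{2}}\,dx\,d\alpha\Big)\Big(\iint |u|^2 e^{-\frac{\beta^2}{2}}\,dy\,d\beta\Big) \lesssim \|u\|_{\dot{H}^1_x \mathcal{L}^2_\alpha}\,\|u\|_{L^2_x \mathcal{L}^2_\alpha}^3.
\end{align*}
The time-uniformity of this bound is furnished by the weighted conservation laws of Proposition \ref{prop: Conserv}: the weighted mass controls $\|u(t)\|_{L^2_x\mathcal{L}^2_\alpha}$ and the energy controls $\|u(t)\|_{\dot{H}^1_x\mathcal{L}^2_\alpha}$. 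Combining the two displays yields the claimed estimate.

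The main subtlety---more bookkeeping than genuine obstacle---is to track the Gaussian weights so that every norm closes in the weighted space $\mathcal{L}^2_\alpha$ rather than the unweighted $L^2_\alpha$ appearing in the divergence case of Corollary \ref{cor Morawetz}. Because $\rho$ depends only on $x-y$ and not on $\alpha$ or $\beta$, the $\alpha$- and $\beta$-integrations decouple from the Morawetz geometry, so the Gaussian factors act merely as a fixed positive measure; here they \emph{assist} rather than obstruct, supplying the localization that plays the role of compactness in the waveguide setting.
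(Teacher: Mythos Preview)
Your proposal is correct and follows essentially the same route as the paper's own proof: set $v=u$, discard the nonnegative Hessian term in \eqref{eq Morawetz2}, compute $\Delta_x\rho$ for the two admissible weights, and bound $|\partial_t I_\rho|$ from \eqref{eq 12} by $\|u\|_{L^2_x\mathcal{L}^2_\alpha}^3\|u\|_{\dot H^1_x\mathcal{L}^2_\alpha}$. Your write-up is in fact slightly more explicit than the paper's (spelling out the fundamental theorem of calculus step, the Cauchy--Schwarz splitting, and the appeal to the conservation laws for time-uniformity), but the argument is the same.
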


\begin{proof}[Proof of Corollary \ref{cor Morawetz2}]
Notice that in Proposition \ref{prop Morawetz2}, when taking $u=v$ and choosing $\rho (x-y) = \abs{x-y}$ or $\inner{x-y}$, we see that 
\begin{align}
\iint H_{\rho} (x-y) (F(u,v) (x,y) , \overline{F} (u,v) (x,y)) \, dxdy e^{- \frac{\alpha^2}{2} -\frac{\beta^2}{2}}  \, d\alpha d \beta \geq 0
\end{align}
hence we ignore this term from \eqref{eq Morawetz2}.

Under the choice of $\rho (x-y) = \abs{x-y}$ or $\inner{x-y}$, we compute
\begin{align}
\Delta_x \abs{x-y} &  = \Delta_y \abs{x-y}= \frac{d-1} { \abs{x-y}}  \\
\Delta_x \inner{x-y} & = \Delta_y \inner{x-y}= \frac{d-1} { \inner{x-y}} + \frac{1}{\inner{x-y}^3}.
\end{align}

Recall \eqref{eq 12}, a straightforward computation yields 
\begin{align}
\abs{\partial_t I_{\rho}} \lesssim \norm{u}_{L_x^2 \mathcal{L}_\alpha^2}^3  \norm{u}_{\dot{H}_x^1 \mathcal{L}_\alpha^2},
\end{align}
where the notation $\mathcal{L}$ is defined as in \eqref{eq L^2}.

Combining the above information, we have
\begin{align}
&  \int \iint \frac{p}{p+2}  \abs{u}^2 (y) (\Delta_x \rho) (x-y) \abs{u}^{p+2} (x) \, dxdy \omega(\alpha) e^{- \frac{\alpha^2}{2} -\frac{\beta^2}{2}}  \, d\alpha d \beta dt\\
& \quad + \int \iint \frac{p}{p+2}   \abs{u}^2 (x) (\Delta_x \rho) (x-y) \abs{u}^{p+2} (y) \, dxdy \omega(\beta) e^{- \frac{\alpha^2}{2} -\frac{\beta^2}{2}} \, d\alpha d \beta dt  \\
& \lesssim \norm{u}_{L_x^2 \mathcal{L}_\alpha^2}^3  \norm{u}_{\dot{H}_x^1 \mathcal{L}_\alpha^2} .
\end{align}

We finish the proof of Corollary \ref{cor Morawetz2}.
\end{proof}

This completes the interaction Morawetz estimates for \eqref{maineq1}. The Gaussian-weighted interaction Morawetz estimates for both \eqref{maineq2} and \eqref{maineq1} are now complete.\footnote{As we can see directly, the differences for the two cases are the weighted norms for the $\mathcal{OU}$-direction. This is also consistent to the Strichartz estimates in Section \ref{Stri}.} Here are a few remarks as below. 

\begin{remark}
The Gaussian weights in the $\mathcal{OU}$ direction provide localization mechanisms analogous to compactness in waveguides. Our adaptation of Morawetz estimates to this setting represents a novel approach to controlling long-range interactions in non-compact geometries.
\end{remark}

\begin{remark}
We note that, the interaction Morawetz estimates established in this section for both \eqref{maineq2} and \eqref{maineq1} are compatible with the conservation laws\footnote{In short, the bounds on the right hand side are energy bounds.}. They are expected to be useful for further investigating the long time dynamics.  
\end{remark}

\begin{remark}[A quick remark on the scattering]
We underline that arguing as in \cite{antonelli2015scattering}, where NLS with a partially confining potential is studied, the following version of interaction Morawetz estimate would be helpful:
\begin{equation}\label{mora}
    \int_{\mathbb{R}} \int_{\mathbb{R}^d}  \left| |D_x|^{\frac{(3-d)}{2}} ( \int_{\mathbb{R}} |u(t,x,y)|^2 d\alpha) \right|^2 dx dt<\infty. 
\end{equation}
Via the Sobolev embedding, one can deduce some {\it a priori} bounds in the case $d = 1, 2, 3$, which are sufficient to deduce scattering for the subcritical case (see the computations in \cite{antonelli2015scattering} in the case of a partially confining potential and see Remark 1.9 in \cite{TV2}).

However, for our case, a variant of \eqref{mora} is not obtainable. That's a reason that the large data scattering in energy space remains open for our models. We leave it as a future goal.
\end{remark}

\section{Finite-time Blow-up Solutions in the Focusing Setting}\label{foc}
In this section, we study the focusing analogue of \eqref{maineq2}. We will show finite-time blow-up solutions in the focusing setting. 

First, we recall the focusing NLS with $\mathcal{OU}$ operator as follows,
\begin{align}\label{eq focusing}
i\partial_t u+\Delta_x u + \nabla_{\alpha} \cdot (\nabla_{\alpha} u\, e^{-\frac{\alpha^2}{2}} ) = - \abs{u}^p u,,  \qquad (t,x, \alpha) \in I \times \R^d \times \R,
\end{align}
where $I=[0,T)$ indicates the maximal lifespan of the solution. 

Then we present the proof of Theorem \ref{mainthm3} as follows.
\begin{proof}[Proof of Theorem \ref{mainthm3}]
The strategy in this proof follows from the so-called virial identity. In fact, we wish to derive a virial identity by taking the second time derivative of the following associated virial potential\footnote{As we can see, we put the weight $|x|^2$ only on the Euclidean direction.}
\begin{align}\label{eq Virial}
V(t) : = \int \abs{x}^2 \abs{u}^2 (t,x,\alpha) \, dx d\alpha .
\end{align}
Then we prove that
\begin{align}
V''(t) \leq C E(u_0)
\end{align}
with $C >0$. Then when assuming $E(u_0) <0$, the positivity of $V(t)$ yields the conclusion via a standard convexity argument. 

Let us remark that in the definition of \eqref{eq Virial}, we used the weight $\abs{x}^2$ (depending only on $x \in \R^d$, not $\alpha$) instead of $\abs{x}^2 + \abs{\alpha}^2$. This mimics what we did in the Morawetz estimate potential (see Proposition \ref{prop Morawetz}).

First, recall from \eqref{eq 4}
\begin{align}
\partial_t \abs{u}^2 (t,x, \alpha) & = 2 \im [-\nabla_x \cdot (\overline{u} \nabla_x u) ]  + 2 \im [-\nabla_{\alpha} \cdot ( \nabla_{\alpha} u e^{-\frac{\alpha^2}{2}}) \overline{u}] .
\end{align}
Then we compute the first time derivative of $V$, and obtain
\begin{align}
\partial_t V(t) & = \partial_t \int \abs{x}^2 \abs{u}^2 (t,x,\alpha) \, dx d\alpha \\
& = \int \abs{x}^2 \partial_t \abs{u}^2 \, dxd\alpha \\
& = 2 \im \int \abs{x}^2 [-\nabla_x \cdot (\overline{u} \nabla_x u) -\nabla_{\alpha} \cdot ( \nabla_{\alpha} u e^{-\frac{\alpha^2}{2}}) \overline{u}] \, dx d\alpha \\
& = 2 \im \int 2 x \cdot \nabla_x u \overline{u}  + \abs{x}^2 \abs{\nabla_{\alpha} u}^2 e^{-\frac{\alpha^2}{2}} \, dx d\alpha \\
& = 4 \im \int x \cdot \nabla_x u \overline{u} \, dx d\alpha .
\end{align}

Then the second time derivative of $V(t)$ is given by
\begin{align}
\frac{1}{4} \partial_t^2 V(t) & = \partial_t \im \int x \cdot \nabla_x u \bar{u} \, dx d\alpha \\
& = \im \int x \cdot \nabla_x u \partial_t \bar{u} \, dx d\alpha + \im \int x \cdot \nabla_x \partial_t u  \bar{u} \, dx d\alpha \\
& = -\im \int \partial_t u x \cdot \nabla_x \bar{u} \, dx d\alpha - \im \int \partial_t u \nabla_x \cdot (x \bar{u}) \, dx d\alpha\\
& = -\im \int \partial_t u \overline{(2x \cdot \nabla_x u + du)} \, dx d\alpha\\
& = 2 \re \int i \partial_t u (\overline{\frac{d}{2} u + x \cdot \nabla_x u}) \, dx d\alpha\\
& = -2 \re \int (\Delta_x u + \nabla_{\alpha} \cdot (\nabla_{\alpha} u\, e^{-\frac{\alpha^2}{2}} ) + \abs{u}^{p}u ) (\overline{\frac{d}{2} u + x \cdot \nabla_x u}) \, dx d\alpha \\
& : = I_1 + I_2 + I_3 .
\end{align}

Next, we work on these three terms separately,
\begin{align}
I_3 &= - 2\re \int \abs{u}^{p}u (\overline{\frac{d}{2} u + x \cdot \nabla_x u}) \, dx  d\alpha \\
& = - d \int \abs{u}^{p+2} \, dx  d\alpha + \frac{2d}{p+2} \int \abs{u}^{p+2} \, dx  d\alpha\\
& = - \frac{dp}{p+2} \int \abs{u}^{p+2} \, dx  d\alpha .
\end{align}

For $I_1$, by Pohozaev identity, we have
\begin{align}
-\re \int \Delta_x u (\frac{d}{2} u + x \cdot \nabla_x u) \, dx =\int \abs{\nabla_x u}^2 \, dx ,
\end{align}
then
\begin{align}
I_1 = 2 \int \abs{\nabla_x u}^2 \, dx d \alpha .
\end{align}

Next, for $I_2$,
\begin{align}
I_2 & = -2 \re \int \nabla_{\alpha} \cdot (\nabla_{\alpha} u\, e^{-\frac{\alpha^2}{2}} ) (\overline{\frac{d}{2} u + x \cdot \nabla_x u}) \, dx d\alpha \\
& = 2 \re \int (\nabla_{\alpha} u\, e^{-\frac{\alpha^2}{2}} ) \cdot (\overline{\frac{d}{2} \nabla_{\alpha} u + x \cdot \nabla_x \nabla_{\alpha} u}) \, dx d\alpha \\
& = d \re \int \abs{\nabla_{\alpha} u}^2 e^{-\frac{\alpha^2}{2}} \, dx d\alpha + 2 \re \int (\nabla_{\alpha} u\, e^{-\frac{\alpha^2}{2}} ) \cdot (\overline{x \cdot \nabla_x \nabla_{\alpha} u}) \, dx d\alpha .
\end{align}
Notice that the second term above can be rewritten as
\begin{align}
2 \re \int (\nabla_{\alpha} u\, e^{-\frac{\alpha^2}{2}} ) \cdot (\overline{x \cdot \nabla_x \nabla_{\alpha} u}) \, dx d\alpha = - 2 d\re \int \abs{\nabla_{\alpha} u}^2 e^{-\frac{\alpha^2}{2}} \, dx d\alpha - 2 \re \int (\nabla_x \nabla_{\alpha} u ) e^{-\frac{\alpha^2}{2}} \overline{x \nabla_{\alpha} u} \, dx d\alpha \end{align}
which implies 
\begin{align}
2 \re \int (\nabla_{\alpha} u\, e^{-\frac{\alpha^2}{2}} ) \cdot (\overline{x \cdot \nabla_x \nabla_{\alpha} u}) \, dx d\alpha & = - d\re \int \abs{\nabla_{\alpha} u}^2 e^{-\frac{\alpha^2}{2}} \, dx d\alpha ,
\end{align}
hence
\begin{align}
I_2 & = d \re \int \abs{\nabla_{\alpha} u}^2 e^{-\frac{\alpha^2}{2}} \, dx d\alpha - d\re \int \abs{\nabla_{\alpha} u}^2 e^{-\frac{\alpha^2}{2}} \, dx d\alpha  = 0 .
\end{align}

Therefore,
\begin{align}
\frac{1}{16} \partial_t^2 V(t) & =\frac{1}{16} \partial_t \im \int x \cdot \nabla u \bar{u} \, dx d\alpha \\
& = \frac{1}{2} \int_{\R^d \times \R} \abs{\nabla u}^2 \, dx d\alpha - \frac{dp}{4(p+2)} \int_{\R^d \times \R} \abs{u}^{p+2} \, dx d\alpha \\
& = \frac{1}{2} \int_{\R^d \times \R} \abs{\nabla u}^2 \, dx d\alpha - \frac{1}{p+2} \int_{\R^d \times \R} \abs{u}^{p+2} \, dx d\alpha - \int_{\R^d \times \R} \abs{u}^{p+2} \sq{\frac{dp}{4(p+2)} - \frac{1}{p+2}} \, dx d\alpha \\
& = E(u)(0) - \int_{\R^d \times \R} \frac{1}{2} \abs{\nabla_{\alpha} u}^2 e^{-\frac{\alpha^2}{2}} \, dx d\alpha - \int_{\R^d \times \R} \abs{u}^{p+2} \frac{dp-4}{4(p+2)} \, dx d\alpha .
\end{align}
Thus, if the initial energy is negative, we see that the second time derivative of $V(t)$ (a non-negative quantity) is negative, hence there exist finite-time blow-up solutions via a standard virial identity argument. 

The proof of Theorem \ref{mainthm3} is now complete.
\end{proof}

\begin{remark}
A partial variant of virial identities plays a significant role in the proof. Although the $\mathcal{OU}$ operator lacks translation invariance, the virial method remains applicable due to the preservation of Gaussian-weighted mass and momentum structure.  One may also study the analogous problem for \eqref{maineq1}. We leave it for interested readers. 
\end{remark}

\section{Summary and Remarks}\label{7}

Finally, we make a summary and give final remarks. These include a comparison between \eqref{maineq2} and \eqref{maineq1}
in Section \ref{sec:8.1}, some remarks about NLS on mixed geometry in Section \ref{sec:8.2} and some future directions in Section \ref{sec:8.3}. Our results provide a first step toward understanding NLS dynamics under $\mathcal{OU}$-type confinement and open the door to further investigations.

\subsection{Comparison between \eqref{maineq2} and \eqref{maineq1}}\label{sec:8.1}

We now comment further on the analytic and geometric comparison between \eqref{maineq2} and \eqref{maineq1}, both of which model mixed-type dispersion through the $\mathcal{OU}$ operator. From a structural point of view:

\begin{itemize}
  \item \textbf{\eqref{maineq2}} employs the divergence form of the $\mathcal{OU}$ operator: 
  \begin{align}
  \nabla_\alpha \cdot (e^{-\frac{\alpha^2}{2}} \nabla_\alpha u),  
  \end{align}
  which is closely tied to Gaussian-weighted energy identities and yields a natural conservation law. However, it lacks commutativity with derivatives, complicating Strichartz analysis and function space choices. Moreover, the lack of spectral diagonalization prevents straightforward modal decomposition.

  \item \textbf{\eqref{maineq1}} adopts the non-divergence form:
  \begin{align}
  (\Delta_\alpha - \alpha \cdot \nabla_\alpha) u,
  \end{align}
  which has a discrete Hermite spectral resolution and behaves more like a compact direction in the waveguide setting. This structure allows efficient application of spectral and Strichartz techniques. On the other hand, \eqref{maineq1} requires a careful choice of nonlinearity weight (e.g., $e^{-p\frac{\alpha^2}{2}}$) to maintain energy balance and compatibility with the $\mathcal{OU}$ operator.
\end{itemize}

The model \eqref{maineq2} has the {physical interpretation} of diffusion with Gaussian-weighted dispersion, while equation \eqref{maineq1} introduces a drift term analogous to confinement potentials. This difference leads to a mathematical trade-off: the drift simplifies the spectral structure of \eqref{maineq1}, allowing more direct Strichartz estimates (Theorem \ref{mainthm1}), whereas \eqref{maineq2} requires Keel–Tao’s machinery (Section \ref{Stri}).

Functionally, \eqref{maineq2} is more natural from the conservation law and variational perspectives, while \eqref{maineq1} is more amenable to linear analysis and dynamical arguments (e.g., small data scattering). The two are connected via a Gaussian gauge transformation:
\begin{align}
v := e^{-\frac{\alpha^2}{2}} u,
\end{align}
which maps \eqref{maineq1} into a ``modified" form resembling \eqref{maineq2} (see Introduction). Yet, this transformation introduces nontrivial weights into the nonlinearity and complicates higher-order analysis.

 The reason why we put the weight to the nonlinearity in \eqref{maineq1} is explained in the introduction (in view of the relations with \eqref{maineq2}) and in Section \ref{WP} (in view of the nonlinear estimates). It is fine to consider a general nonlinearity $F(u)$. Furthermore, it is also interesting to investigate what is the ``sharp" (minimal) weight requirement for establishing well-posedness for \eqref{maineq1}.

Overall, the dichotomy between \eqref{maineq2} and \eqref{maineq1} showcases two complementary approaches to incorporating partial $\mathcal{OU}$ operators in dispersive dynamics: one prioritizing physical derivation, and the other emphasizing spectral tractability. This duality suggests that future studies on ``NLS with noncompact confinement" may flexibly adopt either formulation depending on the analytic goals.

\subsection{Remarks on ``NLS on mixed geometry"}\label{sec:8.2}

The nonlinear Schr\"odinger equations considered in this paper—including those on waveguide manifolds $\mathbb{R}^m \times \mathbb{T}^n$, with partial harmonic confinement, and with $\mathcal{OU}$ drift—can all be viewed as examples of a broader class of models we call ``\textit{NLS on mixed geometry}.'' In such systems, free Euclidean directions coexist with confining directions, the latter realized through periodicity, external potentials, or drift mechanisms. This framework captures a rich interplay between dispersion and geometry, and provides a flexible setting for analyzing how confinement in one direction affects nonlinear dynamics in the others.

These confinement mechanisms differ in nature but share a common spectral feature: they induce localization through non-Euclidean structure. Periodic confinement, as in waveguides, arises from compactness of the torus $\mathbb{T}^n$ and leads to a discrete spectrum from $-\Delta_{\mathbb{T}^n}$. Harmonic potentials produce localization via quadratic growth at infinity, and their associated operator $-\Delta + |x|^2$ has an explicitly known discrete spectrum. The $\mathcal{OU}$ drift operator $\Delta_\alpha - \alpha \cdot \nabla_\alpha$, acting in a Gaussian-weighted space, generates a discrete spectrum as well, and can be interpreted as modeling a type of ``soft confinement'' through drift rather than geometry or potential.

\begin{table}[htbp]
\centering
\begin{tabular}{l|l|l}
\textbf{Model} & \textbf{Confinement Operator} & \textbf{Spectrum} \\ \hline
NLS on Waveguide Manifold & $-\Delta_{\mathbb{T}^n}$ & Discrete $\{k^2\}_{k\in\mathbb{Z}^n}$ \\
NLS with Partial Harmonic Potential & $-\Delta + |x|^2$ & Discrete $\{2|\beta|+n\}_{\beta\in\mathbb{N}^n}$ \\
NLW with $\mathcal{OU}$ Drift & $\Delta_\alpha - \alpha\cdot\nabla_\alpha$ & Discrete $\{n\}_{n\in\mathbb{N}}$ \\
\end{tabular}
\vspace{0.5cm}
\caption{Confinement mechanisms and spectral features}
\label{table1}
\end{table}

The structure of the confinement has a direct impact on the morphology of solutions. In waveguide settings, one observes modulated patterns akin to Bloch waves, shaped by the underlying periodicity. In the case of harmonic potentials, solutions decay like Gaussians and are often expanded in terms of Hermite functions. For the $\mathcal{OU}$ model, the solutions exhibit exponential decay in the confined direction, but with an inherent asymmetry induced by the drift term, distinguishing it from both the periodic and potential cases.

From a physical perspective, this unified viewpoint reveals several possibilities. The $\mathcal{OU}$ model, for instance, could describe waveguides with random or imperfect confinement, where traditional periodicity is replaced by a smoother stochastic effect. In Bose–Einstein condensates, drift-type confinement may provide a more realistic model for symmetric or asymmetric traps beyond the idealized quadratic potential. Furthermore, such frameworks may serve as toy models for quantum gases confined by soft or nonlocal mechanisms.

Several open problems arise naturally in this context. One intriguing direction is to explore stochastic generalizations of the $\mathcal{OU}$ operator, incorporating random noise or time-dependent drift, with potential connections to stochastic NLS. It is also unclear whether there exists a geometric realization of $\mathcal{OU}$-type drift through a Laplace–Beltrami operator on some noncompact Riemannian manifold. Another question is whether Morawetz-type inequalities can be extended or adapted to incorporate stochastic confinement and whether such mechanisms enhance or inhibit long-time dispersive behavior.

Viewed through this lens, the $\mathcal{OU}$-type NLS models offer a new perspective within the generalized waveguide paradigm. They form a distinct class of ``soft-confined dispersive systems,'' situated between deterministic and stochastic models. This suggests a more flexible and potentially unifying approach for analyzing dispersive PDEs in geometries where compactness is replaced by spectral localization.

We conclude this discussion by presenting a broader classification of confinement mechanisms that may appear in dispersive systems. These include not only the periodic, potential, and drift types mentioned earlier, but also more exotic scenarios such as fractional diffusion, magnetic fields, and stochastic potentials. Each introduces its own spectral structure and physical interpretation, enriching the taxonomy of NLS models on mixed or hybrid geometries.

\begin{table}[htbp]
\centering
\begin{tabular}{l|l|l|l}
\textbf{Confinement Type} & \textbf{Operator} & \textbf{Spectrum} & \textbf{Physical Interpretation} \\ \hline
Periodic Geometry& $-\Delta_{\mathbb{T}^n}$ & Discrete ($k^2$) & Optical fibers \\
Standard Potential & $-\Delta + V(x)$ & Depends on $V(x)$ & Bose-Einstein condensate \\
($\mathcal{OU}$) Drift & $\Delta - x\cdot\nabla$ & Discrete ($n\in\mathbb{N}$) & Brownian motion \\
Fractional Laplacian& $(-\Delta)^s$ ($s\in(0,1)$) & Continuous, heavy-tailed & Lévy flights \\
Magnetic-type& $(i\nabla - A)^2$ & Landau levels ($2n+1$) & Quantum Hall systems \\
Stochastic-type& $\mathcal{L}_\omega$ (random) & Dense pure point & Anderson localization \\
\end{tabular}
\vspace{0.5cm}
\caption{Extended classification of confinement mechanisms}
\end{table}

\subsection{Future directions}\label{sec:8.3}
Finally, in this subsection, we list some future directions on research line: ``NLS with $\mathcal{OU}$ operators.". As mentioned in the introduction, this research line is wide open, and many related problems can be further investigated.
\begin{itemize}
    \item \emph{The pure $\mathcal{OU}$ case.} In the current paper, we consider the ``partial $\mathcal{OU}$ case" as in \eqref{maineq2} and \eqref{maineq1}. In fact, one may also consider the ``pure $\mathcal{OU}$ case" (and study the well-posedness and even long time dynamics) in the following sense,
\begin{equation}
    (i\partial_t+(\Delta_{x}-x \cdot \nabla))u=F(u),\quad u(0,x)=u_0\in X,
\end{equation}
where $x \in \mathbb{R}^d$ ($d\geq 1$), $F(u)$ is the nonlinearity and $X$ is for a proper initial space. One may also consider the divergent analogue as \eqref{maineq2},
\begin{equation}
    i\partial_t u+\nabla_x \cdot (\nabla_x u e^{-\frac{\alpha^2}{2}})=F(u),\quad u(0,x)=u_0\in X.\footnote{The conservation laws for these two models can be deduced as in Section \ref{Cons}  with few modifications.}
\end{equation}

\item \emph{Other models with $\mathcal{OU}$ operators.} One may consider other NLS models with $\mathcal{OU}$ operators by changing the nonlinearity (including the sign), the whole dimension, the dimension for the $\mathcal{OU}$ direction, and even the initial space. Different model may behave differently and new ingredients (such as delicate fractional calculus in the weighted setting) may be required. One may also consider some other nonlinear dispersive equations (such as nonlinear wave equations) rather than NLS with (partial or pure) $\mathcal{OU}$ operators.

\item \emph{The focusing case.} One may investigate the focusing analogues of \eqref{maineq2} and \eqref{maineq1}. The Strichartz estimates and local theory work as in the defocusing case, while more ingredients are needed to deal with the long time problem. It is interesting to investigate large-data long-time dynamics in the focusing regime, which requires a deeper understanding of the threshold dynamics and ground state solutions under $\mathcal{OU}$ confinement. See Section \ref{foc} for blow-up results of \eqref{maineq2} in the focusing setting. As mentioned in Section \ref{Morawetz}, even for the defocusing case, the large data scattering in energy space is not clear.

\item \emph{The stochastic version.} A natural extension is to couple the $\mathcal{OU}$ operator with noise, leading to stochastic NLS with multiplicative Gaussian measure as follows
\begin{align}
i\partial_t u + \Delta_x u + \mathcal{OU}\, u = |u|^p u + \sigma u \circ dW_t,
\end{align}
  and it is natural to conjecture: ``Noise-induced regularization may suppress blow-up and ergodic invariant measures under drift confinement." For classical NLS, we refer to \cite{fan2023long} and the references therein for more details.
\end{itemize}

\bibliographystyle{abbrv}

\bibliography{OUNLS}

\end{document}